\newtheorem{theorem}{Theorem}[section]
\newtheorem{lemma}[theorem]{Lemma}
\newtheorem{proposition}[theorem]{Proposition}
\newtheorem{definition}[theorem]{Definition}
\newtheorem*{theorem*}{Theorem}
\newtheorem*{lemma*}{Lemma}
\newtheorem*{remark*}{Remark}
\newtheorem*{definition*}{Definition}
\newtheorem*{proposition*}{Proposition}
\newtheorem*{corollary*}{Corollary}
\numberwithin{equation}{section}
\newcommand{\real}{\mathbb{R}}
\let\ced=\c         % cedilla
\def\x{\xi}
\def\qed{\,\unskip\kern 6pt \penalty 500
\raise -2pt\hbox{\vrule \vbox to8pt{\hrule width 6pt
\vfill\hrule}\vrule}\par}
\definecolor{darkblue}{rgb}{0.05, .05, .65}
\definecolor{darkgreen}{rgb}{0.1, .65, .1}
\definecolor{darkred}{rgb}{0.8,0,0}
\newcommand{\beqn}{\begin{equation}}
\newcommand{\eeqn}{\end{equation}}
\newcommand{\bear}{\begin{eqnarray}}
\newcommand{\eear}{\end{eqnarray}}
\newcommand{\bean}{\begin{eqnarray*}}
\newcommand{\eean}{\end{eqnarray*}}
\begin{document}
%%%%%%%%%%%%%%%%%%%%
%%%%%%%%%%%%%%%%%%%%

%%%%%%%%%%%%%%%%%%%%
%%%%%%%%%%%%%%%%%%%%
\title{\huge \bf Self-similar shrinking of supports and non-extinction for a nonlinear diffusion equation with spatially inhomogeneous strong absorption}

\author{
\Large Razvan Gabriel Iagar\,\footnote{Departamento de Matem\'{a}tica
Aplicada, Ciencia e Ingenieria de los Materiales y Tecnologia
Electr\'onica, Universidad Rey Juan Carlos, M\'{o}stoles,
28933, Madrid, Spain, \textit{e-mail:} razvan.iagar@urjc.es}
\\[4pt] \Large Philippe Lauren\ced{c}ot\,\footnote{Institut de
Math\'ematiques de Toulouse, CNRS UMR~5219, Universit\'e Paul Sabatier, F--31062 Toulouse Cedex 9, France. \textit{e-mail:}
Philippe.Laurencot@math.univ-toulouse.fr}\\ [4pt] \Large Ariel S\'{a}nchez\footnote{Departamento de Matem\'{a}tica
Aplicada, Ciencia e Ingenieria de los Materiales y Tecnologia
Electr\'onica, Universidad Rey Juan Carlos, M\'{o}stoles,
28933, Madrid, Spain, \textit{e-mail:} ariel.sanchez@urjc.es} \\[4pt]}
\date{\today}
\maketitle
%%%%%%%%%%%%%%%%%%%%
%%%%%%%%%%%%%%%%%%%%

%%%%%%%%%%%%%%%%%%%%
%%%%%%%%%%%%%%%%%%%%
\begin{abstract}
We study the dynamics of the following porous medium equation with strong absorption
$$
\partial_t u=\Delta u^m-|x|^{\sigma}u^q,
$$
posed for $(t,x)\in(0,\infty)\times\real^N$, with $m>1$, $q\in(0,1)$ and $\sigma>2(1-q)/(m-1)$. Considering the Cauchy problem with non-negative initial condition $u_0\in L^{\infty}(\real^N)$, \emph{instantaneous shrinking} and \emph{localization of supports} for the solution $u(t)$ at any $t>0$ are established. With the help of this property, \emph{existence and uniqueness} of a non-negative compactly supported and radially symmetric \emph{forward self-similar solution} with algebraic decay in time are proven. Finally, it is shown that finite time extinction does not occur for a wide class of initial conditions and this unique self-similar solution is the \emph{pattern for large time behavior} of these general solutions.
\end{abstract}

\smallskip

\noindent {\bf AMS Subject Classification 2010:} 35B40, 35K65, 35K10, 34D05, 35A24.

\smallskip

\noindent {\bf Keywords and phrases:} porous medium equation, spatially inhomogeneous absorption, self-similar solutions, instantaneous shrinking, large time behavior.

\section{Introduction and main results}

The degenerate diffusion equation
\begin{equation}\label{eq1}
\partial_t u-\Delta u^m+|x|^{\sigma}u^q=0, \qquad (t,x)\in(0,\infty)\times\real^N,
\end{equation}
features, in the range of exponents
\begin{equation}\label{range.exp}
m>1, \qquad q\in(0,1), \qquad \frac{2(1-q)}{m-1}<\sigma<\infty,
\end{equation}
a competition between the nonlinear diffusion in the form of a porous medium equation and a strong absorption weighted with a spatially inhomogeneous coefficient having at least at a formal level different effects at points $x\in\real^N$ with $|x|$ small and with $|x|$ large. Understanding the implications of this competition between the two terms on the dynamics of Eq.~\eqref{eq1} is the goal of the present paper.

It has been shown since long that the effects of the competition between the nonlinear diffusion and the absorption terms on the qualitative properties of solutions to the spatially homogeneous absorption-diffusion equation
\begin{equation}\label{eq1.hom}
\partial_tu-\Delta u^m+u^q=0, \qquad m>1,
\end{equation}
depend strongly on the absorption exponent $q>0$. Indeed, if $q>m$ the influence of the absorption terms on the large time behavior of solutions to Eq. \eqref{eq1.hom} is limited and either the porous medium equation dominates as $t\to\infty$ for $q\geq m+2/N$, or there is a balance between the two terms leading to new asymptotic profiles in the form of very singular self-similar solutions, in the range $m<q<m+2/N$, as it was established in a number of previous works \cite{KP86, PT86, KU87, KV88, KPV89, Le97, Kwak98}, some of them being devoted to the construction of the self-similar profiles and others to the proof of the convergence of general solutions to Eq.~\eqref{eq1.hom} towards these profiles as $t\to\infty$.

The situation becomes more complex as the absorption exponent $q>0$ gets smaller, for the absorption starts to dominate in the dynamics of the equation. The range $1<q<m$ has been addressed also in a number of works either for Eq.~\eqref{eq1.hom} in a bounded domain \cite{BNP82} or in the whole space in relation to the existence of unbounded self-similar solutions with a specific growth at infinity \cite{MPV91} or non-uniqueness of solutions by constructing a wave coming from infinity \cite{CV96}. All these phenomena, which do not show up in the range $q>m$, are due to the strong influence of the absorption term for $q\in (1,m)$, but probably the most striking effect is seen on the large time behavior of solutions to Eq.~\eqref{eq1.hom}. This asymptotic behavior is described in dimension $N=1$ by Chaves and V\'azquez \cite{CV99}, where it is proved that for compactly supported non-negative initial conditions, the support of solutions remains localized in a large ball $B(0,R)$ and the asymptotic pattern is given by a combination between a flat solution of the form $K_*t^{-1/(q-1)}$ with $K_*>0$ explicit (which comes only from the differential equation obtained by neglecting completely the diffusion) and a boundary layer in the form of a localized self-similar solution which approximates the behavior near the moving interface. This is an example of \emph{asymptotic simplification} induced by the dominating absorption. The critical case $q=m$ has been addressed in \cite{CVW97}.

Entering the range $q\in(0,1)$ is going one step forward towards a very strong absorption effect. Two new phenomena occur with respect to the solutions to Eq. \eqref{eq1.hom} for $0<q<1$:

$\bullet$ on the one hand, \emph{finite time extinction} of (non-negative bounded) solutions occurs, that is, there exists $T\in(0,\infty)$ such that $u(t)\not\equiv0$ for $t\in(0,T)$ but $u(T)\equiv0$. This is obviously a consequence of the strong absorption, as it stems from the ordinary differential equation $\partial_t u=u^q$ obtained by neglecting the diffusion;

$\bullet$ on the other hand, \emph{instantaneous shrinking} and \emph{localization of supports} of solutions to Eq.~\eqref{eq1.hom} with bounded initial condition $u_0$ such that $u_0(x)\to0$ as $|x|\to\infty$ take place, as shown by Kalashnikov \cite{Ka74}, Evans \& Knerr \cite{EK79} and Abdullaev \cite{Abd98}. This means that for any non-negative initial condition $u_0\in L^{\infty}(\real^N)$ such that $u_0(x)\to0$ as $|x|\to\infty$ and $\tau>0$, there is $R(\tau)>0$ such that ${\rm supp}\,u(t)\subseteq B(0,R(\tau))$ for all $t\ge\tau$. This is in general a striking phenomenon due to the strength of the absorption term, which involves a very quick loss of mass of the solution.

A description of the extinction rates and behavior near the extinction time of the solutions to Eq.~\eqref{eq1.hom} seems to be only available when $m+q=2$ in \cite{GV94}, revealing a quite unusual case of asymptotic simplification, and seems to be a very complicated problem if $m+q\neq2$. It was then noticed in \cite{GSV99a, GSV99b} that the behavior near the interface depends on the sign of $m+q-2$, a combination that appears to be critical for Eq.~\eqref{eq1.hom}, although a better understanding of its dynamics is still lacking.

The case of spatially dependent strong absorption is much less studied in the literature due to its difficulty when dealing with a variable (and unbounded) coefficient. We mention here Belaud's work \cite{Belaud01} which is closely related to our study, where finite time extinction of solutions to Eq. \eqref{eq1} (but posed on a bounded domain) is proved, provided $\sigma<2(1-q)/(m-1)$, that is, exactly the complementary case to the present work. Although we consider Eq. \eqref{eq1} posed in $\real^N$, our analysis strongly suggests that the range of $\sigma\in(0,2(1-q)/(m-1))$ considered in \cite{Belaud01} is sharp with respect to the extinction phenomenon. We end this presentation by referring to a series of very recent papers by two of the authors \cite{IS20, IS22, IMS22} concerning Eq. \eqref{eq1} with strong inhomogeneous source terms instead of strong absorption, where self-similar solutions are classified by employing dynamical systems techniques and where the sign of $m+q-2$ on the one hand and the position of $\sigma$ with respect to $2(1-q)/(m-1)$ on the other hand, are highly critical.

\medskip

\noindent \textbf{Main results}. We consider the Cauchy problem for Eq.~\eqref{eq1} with non-negative and bounded initial condition
\begin{equation}\label{init.cond}
u(0)=u_0\in L_+^{\infty}(\real^N) := \big\{ z\in L^\infty(\real)\ :\ z(x)\ge 0 \;\text{ a.e. in }\; \real^N \big\}.
\end{equation}
Let us stress here that this is a wider class of initial data than the one considered in the standard $L^1$-theory for the porous medium equation, and we shall see that the strength of the absorption effect allows us to handle such data. Throughout this paper, we consider weak solutions to Eq.~\eqref{eq1}, the rigorous definition being given at the beginning of Section~\ref{sec.wp} below. Our first result states the well-posedness of the Cauchy problem~\eqref{eq1}, \eqref{init.cond} together with a property that justifies the extension of the theory to bounded but not necessarily integrable solutions. We mention here that the next result is valid for any $\sigma>0$, not only in the range of exponents~\eqref{range.exp} which introduces a restriction in $\sigma$ needed in the other main results.

\begin{theorem}[Well-posedness and instantaneous shrinking]\label{th.wp}
For any $m>1$, $q\in(0,1)$ and $\sigma>0$, there is a unique non-negative weak solution to the Cauchy problem~\eqref{eq1}, \eqref{init.cond} which satisfies
\begin{equation}
		\|u(t)\|_\infty \le \|u_0\|_\infty\,, \qquad t\ge 0. \label{wp0}
\end{equation}
In addition, it enjoys the properties of \emph{instantaneous shrinking} and \emph{localization} of the support; that is, for any $t>0$, $u(t)$ has compact support and, given $\tau>0$, there exists $R=R(\tau)>0$, depending on $u_0$ and $\tau$ but not on $t\in [\tau,\infty)$, such that
	$$
	{\rm supp}\,u(t)\subseteq B(0,R(\tau)), \qquad {\rm for \ any} \ t\geq\tau.
	$$
Also, the following \emph{comparison principle} holds true: given $u_{0,i}\in L_+^\infty(\real^N)$, $i=1,2$, such that $u_{0,1}\le u_{0,2}$ in $\real^N$, the corresponding non-negative weak solutions $u_1$ and $u_2$ to~\eqref{eq1}, \eqref{init.cond} satisfy $u_1\le u_2$ in $(0,\infty)\times\real^N$.
\end{theorem}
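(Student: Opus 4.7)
The plan is to establish the three assertions jointly through a regularization scheme whose only genuinely new ingredient is a barrier argument that exploits the weight $|x|^\sigma$, which makes the absorption arbitrarily strong at infinity.

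\emph{Regularization, $L^\infty$ bound, and comparison.} I first approximate $u_0$ by a sequence $u_{0,n} \in L^1 \cap L^\infty$, non-negative and smooth, with $u_{0,n} \to u_0$ a.e.\ and $\|u_{0,n}\|_\infty \le \|u_0\|_\infty$; for instance a mollification of $(u_0 \wedge n)\chi_{B(0,n)}$, perhaps added to a fast-decaying positive tail so that the standard $L^1$-theory for the porous medium equation with a monotone, sign-giving absorption applies and yields a unique non-negative weak solution $u_n$. Since the constant $\|u_0\|_\infty$ is a supersolution (the absorption only subtracts), $\|u_n(t)\|_\infty \le \|u_0\|_\infty$ uniformly in $n$. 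The comparison principle at the approximate level follows from the usual duality argument: pairing the equation satisfied by the difference $u - v$ of two approximate solutions against a smooth approximation of $\mathrm{sign}_+(u - v)$ and using the monotonicity of $z\mapsto z^m$ (through an integration by parts à la Kato) together with the monotonicity of $z\mapsto z^q$ (which makes the absorption contribution non-positive on $\{u>v\}$) gives that $t\mapsto \int_{\real^N}(u-v)_+\,dx$ is non-increasing, whence both the $L^1$-contraction and the pointwise comparison.

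\emph{Compactly supported supersolution.} The central step is to construct, for each $\tau>0$, a radially symmetric, time-independent supersolution $\Sigma_\tau \in L^\infty(\real^N)$ with compact support that dominates $u_n(\tau,\cdot)$ pointwise, uniformly in $n$. I would look for $\Sigma_\tau$ of the form $\Sigma_\tau(x) = \bigl(A - B\psi(|x|)\bigr)_+^{1/(1-q)}$ with $\psi$ a radial increasing function (typically $\psi(r)=r^\gamma$, with $\gamma$ chosen to balance the two nonlinear terms), and tune $A$, $B$ so that $\Sigma_\tau(0)\ge \|u_0\|_\infty$ while the differential inequality $-\Delta \Sigma_\tau^m + |x|^\sigma \Sigma_\tau^q \ge 0$ holds in the weak sense on the positivity set of $\Sigma_\tau$. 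The feasibility of the second condition rests on the unboundedness of $|x|^\sigma$, which dominates the diffusion contribution near the free boundary. To bridge the time interval $(0,\tau)$, where $u_n$ need not yet be compactly supported, I use an auxiliary short-time barrier of ODE type, such as $\bigl(\|u_0\|_\infty^{1-q} - (1-q)\delta|x|^\sigma t\bigr)_+^{1/(1-q)}$ with $\delta\in(0,1)$, which already forces the solution into a large ball $B(0,R_0(\tau))$ by time $\tau$; this is then matched to $\Sigma_\tau$ at $\{|x|=R_0(\tau)\}$. Comparison with the resulting global supersolution gives $u_n(t,\cdot) \le \Sigma_\tau$ for all $t\ge\tau$, and hence $\supp u_n(t) \subseteq B(0,R(\tau))$ where $R(\tau)$ is the support radius of $\Sigma_\tau$ and depends only on $\|u_0\|_\infty$ and $\tau$.

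\emph{Passage to the limit.} The uniform $L^\infty$ bound and the uniform support localization enable the standard compactness machinery for porous medium equations: the energy estimate $\nabla u_n^{(m+1)/2}\in L^2((\tau,T)\times\real^N)$ combined with time equicontinuity deduced from the equation yields compactness in $C([\tau,T];L^1(\real^N))$ for every $0<\tau<T$. A diagonal subsequence converges to a weak solution $u$ of \eqref{eq1}--\eqref{init.cond} on $(0,\infty)\times\real^N$, and the estimate \eqref{wp0}, the support localization, and the comparison principle all pass to the limit by lower semicontinuity and monotonicity. Uniqueness of the limit then upgrades the subsequential convergence to full convergence. The hardest part is clearly the construction in the second paragraph: finding an explicit supersolution whose differential inequality is verified across its free boundary and whose support radius $R(\tau)$ depends only on $\|u_0\|_\infty$ and $\tau$, blowing up as $\tau\to 0$ in a quantitative way compatible with the auxiliary ODE barrier.
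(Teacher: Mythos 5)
Your overall architecture (approximate, compare, build a compactly supported barrier, pass to the limit) is close in spirit to the paper's, but two load-bearing steps have genuine gaps.

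\emph{Uniqueness and comparison in the right class.} Your duality argument monotonizes $t\mapsto\int_{\real^N}(u-v)_+\,dx$, which is the standard $L^1$-contraction. But the theorem asserts uniqueness and comparison among weak solutions that are merely \emph{bounded}; for two such solutions $(u-v)_+$ need not be integrable and the quantity you control can be identically $+\infty$, so the argument is vacuous outside the approximate ($L^1$) level. Since your construction only produces \emph{one} solution, you never rule out other bounded weak solutions, and ``uniqueness of the limit'' is asserted rather than proved. The paper closes exactly this gap by testing the Otto-type inequality against the integrable weight $\varrho_k(x)=(1+|x|^2)^{-k/2}$ with $k>N$, using $|\Delta\varrho_k|\le k(k+2+N)\varrho_k$ together with the bound $(u_1^m-u_2^m)_+\le m M^{m-1}(u_1-u_2)_+$ to run Gronwall on $\int (u_1-u_2)_+\varrho_k\,dx$. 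Some such weighted contraction is indispensable here. (Relatedly, the paper avoids your compactness step altogether by taking the \emph{monotone} truncations $u_{0,j}=u_0\mathbf{1}_{B(0,j)}$, so the approximations converge pointwise by comparison; this is a convenience rather than a gap in your plan.)

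\emph{The barriers fail near their free boundaries when $m+q<2$.} Both of your barriers carry the outer exponent $1/(1-q)$, which is the ODE (time--absorption) exponent; at a \emph{spatial} free boundary the relevant balance is between $\Delta(\cdot)^m$ and $|x|^\sigma(\cdot)^q$. Writing $\Sigma=s_+^{1/(1-q)}$ with $s=A-B\psi(|x|)$, one has $\Sigma^m=s^{m/(1-q)}$ and the dominant (positive) term of $\Delta\Sigma^m$ near $\{s=0\}$ is of order $s^{(m/(1-q))-2}|\nabla s|^2$, while the absorption supplies only $|x|^\sigma s^{q/(1-q)}$; the required inequality $\Delta\Sigma^m\le|x|^\sigma\Sigma^q$ amounts to $s^{(2-m-q)/(1-q)}\gtrsim 1$ as $s\to0$, which fails precisely when $m+q<2$ --- a nonempty part of the admissible range, and indeed the regime the paper must treat separately in its interface analysis. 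Adjusting $\gamma$ in $\psi(r)=r^\gamma$ changes $|\nabla s|$ but not the powers of $s$, so it cannot repair this; the same computation defeats the short-time barrier $\big(\|u_0\|_\infty^{1-q}-(1-q)\delta|x|^\sigma t\big)_+^{1/(1-q)}$, whose Laplacian also produces $s^{(m/(1-q))-2}$. The correct spatial vanishing rate is $(\mathrm{dist})^{2/(m-q)}$, and the paper's barrier is engineered around this: $W_R=(Y_R+Z_R)^{1/m}$ with $Y_R^{1/m}\sim|x_1-2R|^{2/(m-q)}$ (diffusion--absorption balance in space) and $Z_R^{1/m}\sim(T-t)^{1/(1-q)}$ (ODE balance in time), the additive structure at the level of the $m$-th power making $\Delta W_R^m=\Delta Y_R$ explicit and letting a single one-dimensional half-space barrier deliver instantaneous shrinking and localization simultaneously. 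Your two-barrier scheme can be salvaged, but only after replacing the outer exponent of the spatial profile by $2/(m-q)$ (or adopting the paper's mixed form).
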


The main issue to be dealt with in Theorem~\ref{th.wp} is that the initial condition is only locally integrable on $\real^N$, so that the usual properties of the porous medium equation in $L^1(\real^N)$ cannot be used. Nevertheless, the well-posedness in $L_{\text{loc}}^1(\real^N)$ of the porous medium equation with spatially homogeneous nonlinear absorption is established in \cite{Gl1993, GG2002, VW1994}, while a spatially dependent nonlinear absorption vanishing in a neighborhood of $x=0$ is studied in \cite{Gl2001}. As we shall see in the proof given in Section~\ref{sec.wp}, the $L^\infty$-functional framework we are using here allows us to use simpler tools with respect to the above mentioned works.

\medskip

\noindent We also discover that, even if starting with an initial condition which is only bounded, the unique weak solution to the Cauchy problem~\eqref{eq1}, \eqref{init.cond} becomes immediately compactly supported. This phenomenon of instantaneous shrinking of supports has been noticed in a number of equations involving a strong absorption effect but only for initial conditions either converging to zero in some sense as $|x|\to\infty$ or having suitable integrability properties. Thus, in our case \emph{an improvement on the class of initial data} allowing for instantaneous shrinking of supports is achieved: indeed, mere boundedness of $u_0$ is enough for this property of localization to hold true. As we will notice from the proof, this fact is due to the influence of the weight $|x|^{\sigma}$ with $\sigma>0$: in the spatially homogeneous case $\sigma=0$ of Eq.~\eqref{eq1}, this is not true and one needs to ask that $u_0(x)\to0$ as $|x|\to\infty$, as shown in \cite{Abd98, EK79}. Since Theorem~\ref{th.wp} consists of two different results (well-posedness and shrinking and localization of supports), its proof will be split into two parts which are to be found in Section~\ref{sec.wp} and Section~\ref{sec.nsloc}, respectively.

Having settled the well-posedness of the Cauchy problem~\eqref{eq1}, \eqref{init.cond}, we move forward to studying more specialized properties of its solutions. One of the specific features of nonlinear diffusion equations is the availability of a class of special solutions, in self-similar form, which are natural candidates for the large time behavior of wider classes of (more) general solutions. Regarding Eq.~\eqref{eq1}, we look for radially symmetric non-negative self-similar solutions in forward form, with algebraic time decay but without finite time extinction, that is
\begin{equation}\label{SSS}
u(t,x)=t^{-\alpha}f(|x|t^{\beta}), \qquad (t,x)\in (0,\infty)\times\real^N.
\end{equation}
Inserting the ansatz~\eqref{SSS} into Eq.~\eqref{eq1} and letting $\xi=|x|t^{\beta}$, we get that the self-similarity exponents are given by
\begin{equation}\label{SSexp}
\alpha=\frac{\sigma+2}{\sigma(m-1)+2(q-1)}>0, \qquad \beta=\frac{m-q}{\sigma(m-1)+2(q-1)}>0
\end{equation}
as $m$, $q$ and $\sigma$ satisfy \eqref{range.exp}, while the profile $f$ solves the ordinary differential equation
\begin{equation}\label{SSODE}
(f^m)''(\xi)+\frac{N-1}{\xi}(f^m)'(\xi)+\alpha f(\xi)-\beta\xi f'(\xi)-\xi^{\sigma}f^q(\xi) =0,
\end{equation}
with initial conditions
\begin{equation}\label{init.cond.ODE}
f(0)=a, \qquad f'(0)=0,
\end{equation}
for some parameter $a>0$, assuming \textit{a priori} that $f$ does not feature a singularity at zero. With this notation, we are now in a position to state our second theorem.
\begin{theorem}[Existence and uniqueness of self-similar solutions]\label{th.uniqSS}
There exists a \emph{unique compactly supported non-negative self-similar solution} in the form \eqref{SSS} to Eq.~\eqref{eq1}
\begin{equation*}%\label{uniq.SSS}
U(t,x)=t^{-\alpha} f^{*}(|x|t^{\beta}), \qquad (t,x)\in(0,\infty)\times\real^N,
\end{equation*}
with $f^{*}$ solution to the Cauchy problem \eqref{SSODE}-\eqref{init.cond.ODE} for some $a^*=f^{*}(0)>0$.
\end{theorem}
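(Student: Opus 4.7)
I would prove this by a shooting method on the radial-profile ODE~\eqref{SSODE}, parametrized by the initial value $a=f(0)>0$. First, local existence, uniqueness, and continuous dependence of the solution $f_a$ on $a$ follow from rewriting~\eqref{SSODE} in the integrated form
\[
\xi^{N-1}(f^m)'(\xi) = \int_0^\xi \eta^{N-1}\bigl(\eta^{\sigma}f^q - \alpha f + \beta \eta f'\bigr)\,d\eta,
\]
which removes the apparent singularity at $\xi=0$ and enables a standard contraction argument on a small interval $[0,\delta]$, followed by usual ODE continuation as long as $f_a$ stays positive.

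Next, I would classify $a\in(0,\infty)$ according to the behaviour of the maximal positive extension of $f_a$. Let $\mathcal{A}$ be the set of values for which $f_a$ reaches zero at a finite $\xi_0(a)>0$ with $(f_a^m)'(\xi_0(a))<0$; let $\mathcal{B}$ be the set of $a$ for which $f_a$ remains strictly positive on $[0,\infty)$; and let $\mathcal{C}$ be the remaining set, consisting of compactly supported profiles that meet zero at some $\xi_0(a)$ with $(f_a^m)'(\xi_0(a))=0$ and the porous medium interface asymptotics $f_a(\xi)\sim C_a(\xi_0(a)-\xi)_+^{1/(m-1)}$. By continuous dependence on $a$, both $\mathcal{A}$ and $\mathcal{B}$ are open in $(0,\infty)$, so non-emptiness of both will force $\mathcal{C}\neq\emptyset$. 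I would establish non-emptiness through barrier arguments: one extreme regime of $a$ would be treated by comparison with the flat profile of the pure-absorption ODE (forcing the solution to cross zero at finite distance), while the opposite regime would rely on comparison with a positive algebraic supersolution of the form $c\xi^{(\sigma+2)/(m-q)}$ reflecting the far-field dominant balance in~\eqref{SSODE}.

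The main obstacle is uniqueness, that is, showing $\mathcal{C}$ reduces to a single point. I would approach this via an intersection-comparison argument: assuming two distinct profiles $f_{a_1}, f_{a_2}\in\mathcal{C}$ with $a_1<a_2$, one studies the sign of $f_{a_2}^m-f_{a_1}^m$ on the intersection of their supports, uses~\eqref{SSODE} at a first contact point together with the interface asymptotics to extract a second-order linear inequality of definite sign for the difference, and derives a contradiction. Alternatively, and in the spirit of the phase-plane analysis carried out in~\cite{IS20,IS22,IMS22}, one would perform a change of variables bringing~\eqref{SSODE} to a two-dimensional autonomous system, identify compactly supported profiles with a specific connecting orbit between two critical points, and appeal to local invariant-manifold analysis to show that at most one such orbit can exist. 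I expect this monotonicity/phase-plane step to be the most delicate part of the argument, since the inhomogeneous weight $\xi^\sigma$ breaks the translation invariance typically exploited in shooting uniqueness proofs for spatially homogeneous analogues.
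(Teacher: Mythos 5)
Your overall strategy (shooting in $a=f(0)$, a trichotomy of behaviours, openness plus connectedness for existence, comparison for uniqueness) is the same as the paper's, but two of your key steps have genuine gaps. First, in the existence part, the openness of your set $\mathcal{B}$ (profiles staying strictly positive on $[0,\infty)$) is not justified and is the weak link of the topological argument: continuous dependence on $a$ controls profiles only on compact $\xi$-intervals, so a globally positive profile decaying to $0$ at infinity has no reason to have globally positive neighbours. The paper instead takes as second open set the profiles admitting a positive \emph{strict local minimum} before vanishing (a finite-$\xi$, hence stable, condition, Lemmas~\ref{lem.min} and~\ref{lem.C}), and then disposes of the residual case --- a positive, decreasing profile on all of $[0,\infty)$ --- by a genuinely PDE argument: such a profile would generate a bounded solution of \eqref{eq1} whose support is all of $\real^N$ for all times, contradicting the instantaneous shrinking of Theorem~\ref{th.wp} (see Lemma~\ref{lem.B}). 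Without this step, or a substitute for it, your connectedness argument does not produce an element of your set $\mathcal{C}$. Relatedly, your claimed interface behaviour $f_a(\xi)\sim C_a(\xi_0(a)-\xi)^{1/(m-1)}$ is incorrect: near the interface the absorption term is \emph{not} of lower order, and Propositions~\ref{prop.interf.low} and~\ref{prop.interf.high} show that the correct exponents are $2/(m-q)$ when $m+q\le 2$ and $1/(1-q)$ when $m+q\ge 2$, with explicit leading constants depending on $\xi_0(a)$.

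Second, the uniqueness sketch is too vague exactly where the difficulty lies, and the error in the interface exponent propagates into it. A direct first-contact comparison between $f_{a_1}$ and $f_{a_2}$ does not close: two admissible profiles have zero contact at their interfaces and, by Lemma~\ref{lem.expPME}, identical structure of their Taylor expansions at $\xi=0$ up to order $\xi^{\sigma+1}$ (the weight only enters at order $\xi^{\sigma+2}$, Lemma~\ref{lem.expsigma}), so there is no usable transversality at a touching point. The paper's mechanism is a scaling--sliding comparison with the one-parameter family $G_\lambda(\xi)=\lambda^{-2m/(m-1)}F_1(\lambda\xi)$, which is a strict supersolution of \eqref{ODE2} for $\lambda\in(0,1)$ precisely because $\sigma(m-1)+2(q-1)>0$ in the range \eqref{range.exp}; interior contact is excluded by the sign of the extra term in \eqref{ODE.resc.monot}, contact at $\xi=0$ by the explicit $\xi^{\sigma+2}$ coefficient of Lemma~\ref{lem.expsigma}, and contact at the common edge of the supports by the exact interface constants of Propositions~\ref{prop.interf.low} and~\ref{prop.interf.high}, treated in three separate cases according to the sign of $m+q-2$. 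This is how the loss of translation invariance you correctly identify is overcome: one slides in scale rather than in space. Your phase-plane alternative also does not apply as stated, since the weight forces a genuinely three-dimensional autonomous system (\eqref{PSSyst.low}, \eqref{PSSyst.high}), which the paper uses only locally at the interface and not for a global connecting-orbit uniqueness argument.
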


The proof of Theorem~\ref{th.uniqSS} is based on a combination of several techniques related to the local behavior of the solutions to the Cauchy problem \eqref{SSODE}-\eqref{init.cond.ODE} near $\xi=0$ and also near the interface point $\xi=\xi_0(a)\in(0,\infty)$ (if any): it involves a shooting method for solutions to differential equations, some dynamical system techniques and the scaling and sliding technique in order to show the monotonicity of solutions with respect to the shooting parameter $a=f(0)\in(0,\infty)$ and then the uniqueness of the solution with compact support among them. All this program is developed in detail in the longest Section~\ref{sec.SSS} of the paper. Let us stress here that, together with the proof, we provide the precise local behavior of the profile $f^{*}$ when $\xi\to 0$ and near the interface point, and the vanishing of $f^{*}$ at the interface turns out to depend strongly on whether $m+q\ge 2$ or $m+q<2$.

Making use of the previous results, it is now rather straightforward to establish the local behavior of a wide class of solutions to the Cauchy problem~\eqref{eq1}, \eqref{init.cond}.

\begin{theorem}[Large time behavior of bounded solutions]\label{th.asympt}
Let $u$ be the solution to the Cauchy problem~\eqref{eq1}, \eqref{init.cond} and assume that there is $\delta>0$ and $r>0$ such that $u_0(x)\ge \delta$ for a.a. $x\in B(0,r)$. Then the behavior as $t\to\infty$ of $u(t)$ is given by the unique self-similar solution $U(t)$, in the sense that
\begin{equation}\label{asympt.beh}
\lim\limits_{t\to\infty}t^{\alpha}\|u(t)-U(t)\|_{\infty}=0.
\end{equation}
\end{theorem}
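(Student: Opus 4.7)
The plan is to sandwich $u(t,\cdot)$ between two time-translated copies of the self-similar solution $U$ from Theorem~\ref{th.uniqSS}, namely to establish
\begin{equation}\label{sandwich}
U(t+\tau_1,x)\le u(t,x)\le U(t-\tau_2,x), \qquad t\ge t_0,\ x\in\real^N,
\end{equation}
for some $\tau_1,\tau_2>0$ and $t_0\ge 0$, and to deduce \eqref{asympt.beh} from the fact that, after multiplication by $t^{\alpha}$, the two outer terms converge uniformly in $x$ to $f^{*}(|x|t^{\beta})=t^{\alpha}U(t,x)$. Since each shifted $U(\cdot+\tau,\cdot)$ is itself a weak solution of Eq.~\eqref{eq1} on the region where $t+\tau>0$, both inequalities in \eqref{sandwich} will follow from the comparison principle in Theorem~\ref{th.wp} once they are checked at a single time.

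For the lower bound, I would use that, by Theorem~\ref{th.uniqSS}, the profile $f^{*}$ is continuous with $f^{*}(0)=a^{*}>0$ and compactly supported in some $[0,\xi^{*}]$, so that $U(\tau,\cdot)$ has $L^{\infty}$-norm $a^{*}\tau^{-\alpha}$ and is supported in $\overline{B(0,\xi^{*}\tau^{-\beta})}$. Choosing $\tau_1>0$ so large that $a^{*}\tau_1^{-\alpha}\le \delta$ and $\xi^{*}\tau_1^{-\beta}\le r$ yields $U(\tau_1,\cdot)\le \delta\chi_{B(0,r)}\le u_0$ a.e. in $\real^N$, and the comparison principle then delivers $u(t,x)\ge U(t+\tau_1,x)$ for every $t\ge 0$ and $x\in\real^N$.

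For the upper bound, I would fix any $t_0>0$, invoke the instantaneous shrinking provided by Theorem~\ref{th.wp} to obtain $R:=R(t_0)$ with $\supp u(t_0)\subset B(0,R)$ and $\|u(t_0)\|_\infty\le \|u_0\|_\infty$, and then pick $\eta\in(0,\xi^{*}]$ with $f^{*}\ge a^{*}/2$ on $[0,\eta]$ (possible by continuity of $f^{*}$ at the origin). Choosing $\e\in(0,t_0)$ small enough that simultaneously $R\e^{\beta}\le\eta$ and $(a^{*}/2)\e^{-\alpha}\ge \|u_0\|_\infty$, the first condition forces $|x|\e^{\beta}\in[0,\eta]$ on $B(0,R)$ while the second pushes $\e^{-\alpha}f^{*}(|x|\e^{\beta})$ above $\|u_0\|_\infty$ there, so that $U(\e,\cdot)\ge u(t_0,\cdot)$ pointwise on $\real^N$ (on $|x|>R$ the right-hand side vanishes and the left-hand side is non-negative). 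Setting $\tau_2:=t_0-\e>0$ and applying the comparison principle on $[t_0,\infty)\times\real^N$ gives $u(t,x)\le U(t-\tau_2,x)$.

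Finally, extending $f^{*}$ by $0$ to a uniformly continuous function on $[0,\infty)$ and writing, with $y:=|x|t^{\beta}$,
$$
t^{\alpha}U(t\pm\tau,x) = \left(1\pm\frac{\tau}{t}\right)^{-\alpha} f^{*}\!\left(y\left(1\pm\frac{\tau}{t}\right)^{\beta}\right),
$$
one sees that the prefactor tends to $1$ and the argument of $f^{*}$ to $y$ as $t\to\infty$, uniformly in $y\ge 0$; hence $t^{\alpha}\|U(t\pm\tau,\cdot)-U(t,\cdot)\|_\infty\to 0$, and combining this with \eqref{sandwich} yields \eqref{asympt.beh}. The main obstacle I anticipate is the construction of the supersolution in the previous paragraph: translating the rather rough information ``$u(t_0)$ is bounded and compactly supported'' supplied by Theorem~\ref{th.wp} into a \emph{pointwise} upper bound by a shifted self-similar solution relies on the full package $f^{*}(0)=a^{*}>0$, continuity of $f^{*}$ at $0$, and compactness of its support, all provided by Theorem~\ref{th.uniqSS}.
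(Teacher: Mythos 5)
Your proposal is correct and follows essentially the same route as the paper: sandwiching $u$ between time-shifted copies of the self-similar solution via the comparison principle (the lower bound from the positivity of $u_0$ near the origin, the upper bound from instantaneous shrinking at a fixed positive time), then letting $t\to\infty$ using the compact support and uniform continuity of $f^{*}$. The only cosmetic difference is in the supersolution step, where you select a neighbourhood $[0,\eta]$ on which $f^{*}\ge a^{*}/2$ instead of invoking the monotonicity of $f^{*}$ as the paper does; both yield the same shifted bound $u(t,\cdot)\le U(t-\tau_2,\cdot)$.
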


The outcome of Theorem~\ref{th.asympt} fails to be true without the positivity assumption on $u_0$ in a small neighborhood of the origin. Indeed, the availability of a stationary solution of the form $A|x|^{(\sigma+2)/(m-q)}$, together with the comparison principle, entails that, at least for some initial conditions $u_0$ as in \eqref{init.cond}, the solution to the Cauchy problem~\eqref{eq1}, \eqref{init.cond} satisfies $u(t,0)=0$ for any $t>0$. This is made precise in Proposition~\ref{prop.stat} at the end of Section~\ref{sec.asympt}.

Let us point out here an interesting outcome of our analysis: finite time extinction does not occur for solutions to the Cauchy problem~\eqref{eq1}, \eqref{init.cond} as in the statement of Theorem~\ref{th.asympt} and in the range of exponents~\eqref{range.exp}. We have thus shown evidence of a rather unexpected case of an equation enjoying \emph{instantaneous shrinking of supports but without finite time extinction}. This behavior is due to the effect of the variation with respect to the space variable of the weight $|x|^{\sigma}$: indeed, on the one hand, for $|x|$ sufficiently large the absorption is very strong, leading to a very quick loss of mass and a shrinking and localization of the support. On the other hand, if there is enough mass in a small neighborhood of the origin, at those points $|x|^{\sigma}$ is very small and the absorption is quite weak, allowing (together with the slow diffusion) thus some mass to ``survive" at any time $t>0$ and preventing finite time extinction. The proof of Theorem~\ref{th.asympt} is performed in Section~\ref{sec.asympt}.

\section{Well-posedness}\label{sec.wp}
%%%%%%%%%%%%%%%%
%%%%%%%%%%%%%%%%

This section is devoted to the proof of the well-posedness in $L^\infty_+(\real^N)$ of the initial value problem
\begin{equation}\label{wp1}
	\begin{split}
		\partial_t u - \Delta u^m + |x|^\sigma u^q & = 0 \;\;\text{ in }\;\; (0,\infty)\times\real^N, \\
		u(0) & = u_0 \;\;\text{ in }\;\; \real^N\,,
	\end{split}
\end{equation}
when the parameters $(m,q,\sigma)$ satisfy
\begin{equation*}
	m\ge 1\,, \quad q>0\,, \quad \sigma>0\,, %\label{wp2}
\end{equation*}
and $u_0\in L_+^\infty(\real^N)$. Let us stress here that we do not need the further restriction on $\sigma$ given in~\eqref{range.exp}. We first state the definition of weak solutions.

\begin{definition}\label{def.wp}
A non-negative weak solution to~\eqref{wp1} is a function
\begin{subequations}\label{wp4}
\begin{equation}
	u \in L_+^\infty((0,\infty)\times\real^N) \label{wp4a}
\end{equation}
such that, for all $T>0$,
\begin{equation}
	u^m \in L^2\big((0,T),H^1_{{\rm loc}}(\real^N)\big) \label{wp4b}
\end{equation}
and
\begin{equation}
	\int_0^T \int_{\real^N} \Big[ (u_0-u) \partial_t \zeta + \nabla u^m \cdot \nabla\zeta + |x|^\sigma u^q \zeta \Big]\ dxds = 0 \label{wp4c}
\end{equation}
for all $\zeta\in C_c^1([0,T)\times\real^N)$.
\end{subequations}
\end{definition}
%%%%%%%%%%%%%%%%

Once settled the notion of solution to work with, we can prove the well-posedness of the Cauchy problem~\eqref{eq1}, \eqref{init.cond}.

\begin{proof}[Proof of Theorem~\ref{th.wp}: Uniqueness and comparison principle]
	Consider $u_{0,i}\in L^\infty_+(\real^N)$, $i=1,2$, and denote the corresponding non-negative weak solution to~\eqref{wp1} by $u_i$. Setting $v_i := u_i^m$, $i=1,2$, we proceed as in the proof of \cite[Theorem, Eq.~(15)]{Ot1996} to show that, for all $T>0$ and non-negative $\vartheta\in C_c^\infty([0,T)\times\real^N)$,
	\begin{align*}
		& \int_0^T \int_{\real^N} \left\{ \Big[ (u_{0,1} - u_{0,2})_+ - (u_1-u_2)_+ \Big] \partial_t \vartheta + \mathrm{sign}_+(v_1-v_2) \nabla (v_1-v_2)\cdot \nabla\vartheta \right\}\ dxds \\
		& \hspace{5cm}  +  \int_0^T \int_{\real^N} \mathrm{sign}_+(u_1-u_2)  \big( u_1^q - u_2^q \big)\vartheta \ dxds \le 0\,.
	\end{align*}
	Owing to the monotonicity of $z\mapsto z^q$ and the non-negativity of $\vartheta$, we readily deduce from the above inequality that
	\begin{equation*}
		\int_0^T \int_{\real^N} \left\{ \Big[ (u_{0,1} - u_{0,2})_+ - (u_1-u_2)_+ \Big] \partial_t \vartheta + \nabla (v_1-v_2)_+\cdot \nabla\vartheta \right\}\ dxds \le 0\,,
	\end{equation*}
	and a further integration by parts leads us to
	\begin{equation}
		\int_0^T \int_{\real^N} \left\{ \Big[ (u_{0,1} - u_{0,2})_+ - (u_1-u_2)_+ \Big] \partial_t \vartheta - (v_1-v_2)_+ \Delta\vartheta \right\}\ dxds \le 0\,. \label{wp5}
	\end{equation}
Now, pick $T>0$, $k>N$, and a non-negative function $\xi\in C_c^\infty([0,T))$. Setting $\varrho_k(x) := (1+|x|^2)^{-k/2}$, $x\in\real^N$, we observe that
\begin{equation}
	|\nabla\varrho_k(x)| \le k \varrho_k(x)\,, \quad |\Delta\varrho_k(x)| \le k (k+2+N) \varrho_k(x)\,, \qquad x\in\real^N\,, \label{wp100}
\end{equation}
while a standard approximation argument allows us to take $\vartheta=\xi\varrho_k$ in~\eqref{wp5}, thereby obtaining
\begin{align*}
	\int_0^T  \partial_t \xi \int_{\real^N}&\Big[ (u_{0,1} - u_{0,2})_+ - (u_1-u_2)_+ \Big] \varrho_k\ dxds  \le \int_0^T \xi \int_{\real^N}  (v_1-v_2)_+ \Delta\varrho_k\ dxds \\
	& \le k(k+2+N) \int_0^T \xi \int_{\real^N}  (v_1-v_2)_+ \varrho_k\ dxds\,.
\end{align*}
We next choose appropriate approximations $\xi\in C_c^\infty([0,T))$ of $\mathbf{1}_{[0,t]}$, $t\in (0,T)$, and finally find
\begin{align*}
	\int_{\real^N} (u_1-u_2)_+(t) \varrho_k\ dx & \le \int_{\real^N} (u_{0,1} - u_{0,2})_+ \varrho_k\ dx \\
	& \qquad + k(k+2+N) \int_0^t \int_{\real^N}  (v_1-v_2)_+ \varrho_k\ dxds\,.
\end{align*}
Introducing
\begin{equation*}
	M(T) := \sup_{s\in [0,T]} \big\{ \|u_1(s)\|_\infty + \|u_2(s)\|_\infty \big\}\,,
\end{equation*}
which is finite according to Definition~\ref{def.wp}, we note that
\begin{equation*}
	(v_1-v_2)_+ = \big( u_1^m - u_2^m \big)_+ \le m \max\big\{ u_1^{m-1},u_2^{m-1}\big\} (u_1-u_2)_+ \le m M(T)^{m-1}  (u_1-u_2)_+
\end{equation*}
in $(0,T)\times \real^N$, since $m\ge 1$. Consequently,
\begin{align*}
	\int_{\real^N} (u_1-u_2)_+(t) \varrho_k\ dx & \le \int_{\real^N} (u_{0,1} - u_{0,2})_+ \varrho_k\ dx \\
	& + mk(k+2+N) M(T)^{m-1} \int_0^t \int_{\real^N}  (u_1-u_2)_+(s) \varrho_k\ dxds\,.
\end{align*}
Gronwall's lemma then gives
\begin{equation*}
	\int_{\real^N} (u_1-u_2)_+(t) \varrho_k\ dx \le e^{mk(k+2+N) M(T)^{m-1}t} \int_{\real^N} (u_{0,1} - u_{0,2})_+ \varrho_k\ dx \,, \qquad t\in [0,T]\,,
\end{equation*}
from which both uniqueness and comparison principle follow.
\end{proof}

\begin{proof}[Proof of Theorem~\ref{th.wp}: Existence] The existence proof we sketch below has some common features with previous results in literature dealing with initial conditions in $L_{\mathrm{loc}}^1(\real^N)$ (such as \cite{Gl1993, GG2002, Gl2001,VW1994}), but the $L^\infty$-functional framework allows us to use simpler tools, in particular integrable test functions instead of general compactly supported ones. Given $u_0\in L^1(\real^N)\cap L^\infty_+(\real^N)$, the existence of a non-negative finite energy solution
\begin{align*}
	& u \in L^\infty((0,\infty),L^1(\real^N))\cap L^\infty((0,\infty)\times\real^N) \cap L^{m+q}((0,\infty)\times\real^N,|x|^\sigma dxdt) \\
	& \nabla u^m \in L^2((0,\infty)\times \real^N,\real^N)
\end{align*}
to~\eqref{wp1} satisfying the properties listed in~Definition~\ref{def.wp} as well as~\eqref{wp0} is established by a standard variational approach based upon the solvability of the implicit time scheme
\begin{equation*}
	v_{n+1}^{1/m} - \tau \Delta v_{n+1} + \tau |x|^\sigma v_{n+1}^{q/m} = v_n^{1/m} \qquad {\rm in} \ \real^N\,, \qquad n\ge 0\,,
\end{equation*}
with $v_0:=u_0^m$ and $\tau>0$, which is guaranteed by the direct method of the calculus of variations applied to the energy functional
\begin{equation*}
	\int_{\real^N} \left( \frac{m}{m+1} v^{(m+1)/m} + \frac{\tau}{2} |\nabla v|^2 + \frac{m\tau}{m+q} |x|^\sigma v^{(m+q)/m}(x) - v_n^{1/m} v \right)\ dx\,.
\end{equation*}	
The uniqueness of such a solution is then a consequence of the already proven uniqueness of weak solutions to~\eqref{wp1}.

\medskip

Consider now $u_0\in L^\infty_+(\real^N)$ and define
\begin{equation*}
	u_{0,j} := u_0 \mathbf{1}_{B(0,j)}\,, \qquad j\ge 1\,.
\end{equation*}
Then
\begin{equation}
	u_{0,j} \in L^1(\real^N)\cap L_+^\infty(\real^N) \qquad {\rm with} \ u_{0,j} \le u_{0,j+1}\,, \qquad j\ge 1\,, \label{wp10}
\end{equation}
and we denote the finite energy solution to~\eqref{wp1} with initial condition $u_{0,j}$ by $u_j$. Owing to \eqref{wp0}, \eqref{wp10}, and the comparison principle,
\begin{equation}
	0 \le u_j(t,x) \le u_{j+1}(t,x) \le M := \|u_0\|_\infty\,, \qquad (t,x)\in [0,\infty)\times\real^N\,, \label{wp11}
\end{equation}
so that the function
\begin{equation}
	u(t,x) := \sup_{j\ge 1} \{u_j(t,x)\} = \lim_{j\to\infty} u_j(t,x)\,, \qquad (t,x)\in [0,\infty)\times\real^N\,, \label{wp12}
\end{equation}
is well-defined and satisfies~\eqref{wp0}.  We are left with checking that $u$ is a weak solution to~\eqref{def.wp}. As in the uniqueness proof, pick $k>N$ and recall that $\varrho_k(x) = (1+|x|^2)^{-k/2}$, $x\in\real^N$. It follows from~\eqref{wp1}, \eqref{wp100}, and~\eqref{wp11} that
\begin{align*}
	\frac{1}{m+1} \frac{d}{dt} \int_{\real^N} u_j^{m+1} \varrho_k\ dx & + \int_{\real^N} |x|^\sigma u_j^{m+q} \varrho_k\ dx \\
	& = - \int_{\real^N} \nabla \big(u_j^m \varrho_k)\cdot \nabla u_j^m\ dx \\
	& = - \int_{\real^N} |\nabla u_j^m|^2 \varrho_k\ dx - \int_{\real^N} u_j^m \nabla u_j^m \cdot \nabla\varrho_k\ dx \\
	& = - \int_{\real^N} |\nabla u_j^m|^2 \varrho_k\ dx + \frac{1}{2} \int_{\real^N} u_j^{2m} \Delta\varrho_k\ dx \\
	& \le - \int_{\real^N} |\nabla u_j^m|^2 \varrho_k\ dx + \frac{k(k+2+N)}{2} \int_{\real^N} u_j^{2m} \varrho_k\ dx \\
	& \le - \int_{\real^N} |\nabla u_j^m|^2 \varrho_k\ dx + \frac{k(k+2+N)M^{m-1}}{2} \int_{\real^N} u_j^{m+1} \varrho_k\ dx\,.
\end{align*}
Consequently, for all $T>0$, there is a positive constant $c_1(T)$ depending only on $N$, $m$, $k$, and $M$ such that
\begin{equation}
	\int_0^T \int_{\real^N} |\nabla u_j^m|^2 \varrho_k\ dxdt + \int_0^T \int_{\real^N} |x|^\sigma u_j^{m+q} \varrho_k\ dxdt \le c_1(T)\,. \label{wp13}
\end{equation}
On the one hand, it readily follows from~\eqref{wp12}, \eqref{wp13}, and a lower semicontinuity argument that $\sqrt{\varrho_k} \nabla u^m$ belongs to $L^2((0,T)\times\real^N)$ for all $T>0$. Thus, $u$ satisfies~\eqref{wp4b}. On the other hand, we proceed as in \cite{Gl1993, GG2002, VW1994} to check that $u$ satisfies the weak formulation~\eqref{wp4c} of~\eqref{wp1} and complete the proof.
\end{proof}

\section{Instantaneous shrinking and localization of supports}\label{sec.nsloc}

This section is devoted to the second statement in Theorem~\ref{th.wp}, concerning the instantaneous shrinking and localization of support for bounded, non-negative solutions to Eq.~\eqref{eq1}.

\begin{proof}[Proof of Theorem~\ref{th.wp}: instantaneous shrinking and localization of supports]
We argue as in \cite[Section~4]{EK79} and \cite[Section~2]{Ka84}. Set
$$
\mathcal{L}w:=\partial_tw-\Delta w^m+|x|^{\sigma}w^q
$$
and introduce the one-variable functions
\begin{align*}
Y_R(x) & :=[A(R)(x_1-2R)^2]^{m/(m-q)}, \qquad x = (x_1,x') \in(R,\infty)\times\real^{N-1}, \\
& \ = A(R)^{m/(m-q)} |x_1-2R|^{2m/(m-q)},
\end{align*}
and
$$
Z_R(t)=[B(R)(T-t)]^{m/(1-q)}, \qquad t\in(0,T),
$$
with $T>0$ arbitrary, $R>0$, $A(R)>0$ and $B(R)>0$ to be determined later. Introducing
$$
W_R(t,x) := \big[ Y_R(x)+Z_R(t) \big]^{1/m}, \qquad (t,x)\in [0,T]\times [R,\infty)\times \real^{N-1},
$$
our aim is to find $R$, $A(R)$ and $B(R)$ to guarantee that
\begin{equation}\label{interm2}
\begin{split}
\mathcal{L}W_R & \ge 0 \ {\rm in} \ (0,T)\times(R,\infty)\times\real^{N-1}, \\ W_R(0) & \ge \|u_0\|_{\infty}\geq u_0 \ {\rm in} \ (R,\infty) \times \real^{N-1}.
\end{split}
\end{equation}
The latter is obviously fulfilled when
$$
W_R(0,x)\geq Z_R(0)^{1/m}=B(R)^{1/(1-q)}T^{1/(1-q)}\geq\|u_0\|_{\infty},
$$
which leads to a first condition on $B(R)$, namely
\begin{equation}\label{cond1}
B(R)\geq\frac{1}{T}\|u_0\|_{\infty}^{1-q}.
\end{equation}
We next verify the first condition in \eqref{interm2}. To this end, we notice that, since $m\geq1$,
$$
\big[ Y_R(x)+Z_R(t) \big]^{(m-1)/m} \geq Z_R(t)^{(m-1)/m} = \big[ B(R)(T-t) \big]^{(m-1)/(1-q)},
$$
hence
\begin{equation}\label{interm3}
\begin{split}
\partial_t W_R(t,x) & = - \frac{B(R)^{m/(1-q)}}{1-q} \frac{(T-t)^{(m+q-1)/(1-q)}}{\big[Y_R(x)+Z_R(t)\big]^{(m-1)/m}}\\
&\geq-\frac{B(R)^{1/(1-q)}}{1-q}(T-t)^{q/(1-q)}.
\end{split}
\end{equation}
Moreover,
\begin{equation}\label{interm4}
\begin{split}
\Delta W_R^m&=\Delta(Y_R(x)+Z_R(t))=\partial_{x_1}^2 Y_R(x)\\
&=\frac{2m(m+q)}{(m-q)^2}A(R)^{m/(m-q)}|x_1-2R|^{2q/(m-q)}.
\end{split}
\end{equation}
We then infer from \eqref{interm3} and \eqref{interm4} that, for $t\in (0,T)$ and $x\in (R,\infty)\times \real^{N-1}$,
\begin{equation*}
\begin{split}
\mathcal{L}W_R&\geq|x|^{\sigma}\left[A(R)^{m/(m-q)}|x_1-2R|^{2m/(m-q)}+B(R)^{m/(1-q)}(T-t)^{m/(1-q)}\right]^{q/m}\\
& \qquad - \frac{B(R)^{1/(1-q)}}{1-q} (T-t)^{q/(1-q)} - \frac{2m(m+q)}{(m-q)^2} A(R)^{m/(m-q)} |x_1-2R|^{2q/(m-q)}\\
&\geq \frac{x_1^{\sigma}}{2} B(R)^{q/(1-q)} (T-t)^{q/(1-q)} - \frac{B(R)^{1/(1-q)}}{1-q} (T-t)^{q/(1-q)}\\
& \qquad + \frac{x_1^{\sigma}}{2} A(R)^{q/(m-q)} |x_1-2R|^{2q/(m-q)} \\
& \qquad - \frac{2m(m+q)}{(m-q)^2} A(R)^{m/(m-q)} |x_1-2R|^{2q/(m-q)}\\
&\geq \frac{(T-t)^{q/(1-q)}}{2} B(R)^{q/(1-q)} \left[R^{\sigma} - \frac{2}{1-q} B(R) \right]\\
& \qquad + \frac{|x_1-2R|^{2q/(m-q)}}{2} A(R)^{q/(m-q)} \left[ R^{\sigma} - \frac{4m(m+q)}{(m-q)^2} A(R) \right]\geq0,
\end{split}
\end{equation*}
if the following conditions are satisfied
\begin{equation}\label{cond2}
A(R)\leq\frac{(m-q)^2}{4m(m+q)}R^{\sigma}, \qquad B(R)\leq\frac{1-q}{2}R^{\sigma}.
\end{equation}
Finally, the comparison on the parabolic lateral boundary $x\in\{R\}\times\real^{N-1}$ and $t\in[0,T]$ gives, together with \eqref{wp0},
$$
W_R(t,x)\geq Y_R(x)^{1/m}=A(R)^{1/(m-q)}R^{2/(m-q)}\geq\|u_0\|_{\infty}\geq u(t,x),
$$
provided
\begin{equation}\label{cond3}
A(R)R^2\geq\|u_0\|_{\infty}^{m-q}.
\end{equation}
Choosing thus
$$
A(R)=\frac{(m-q)^2}{4m(m+q)}R^{\sigma}, \qquad B(R)=\frac{1-q}{2}R^{\sigma}
$$
and
$$
R\geq R(T) := \max\left\{ \left[ \frac{2\|u_0\|_{\infty}^{1-q}}{(1-q)T} \right]^{1/\sigma} , \left[ \frac{4m(m+q)}{(m-q)^2}\|u_0\|_{\infty}^{m-q}\right]^{1/(\sigma+2)} \right\},
$$
we find that conditions \eqref{cond1}, \eqref{cond2} and \eqref{cond3} are satisfied and thus $W_R$ is a supersolution to Eq.~\eqref{eq1} in $(0,T)\times(R,\infty)\times\real^{N-1}$. The comparison principle entails then that
$$
W_R(t,x)\geq u(t,x), \qquad (t,x)\in[0,T]\times[R,\infty)\times\real^{N-1}.
$$
In particular, if $R\ge R(T)$ and $x\in\{2R\}\times\real^{N-1}$ we find that $u(T,x)\leq W_R(T,x)=0$, so that
$$
u(T)\equiv0, \qquad {\rm in} \quad (2R(T),\infty)\times\real^{N-1}.
$$
Due to the rotational invariance of Eq.~\eqref{eq1}, we have thus actually proved that $u(T)\equiv0$ in $\real^N\setminus B(0,2R(T))$. Since $T$ is arbitrary and $R(T)$ decreases with $T>0$, it follows that $u$ is compactly supported for positive times and, given $\tau>0$, its support remains localized in a fixed ball $B(0,2R(\tau))$ for any $t\in(\tau,\infty)$.
\end{proof}

\section{Existence and uniqueness of a self-similar solution}\label{sec.SSS}

Let us turn now our attention to non-negative self-similar solutions to Eq.~\eqref{eq1}, that is, non-negative solutions in the form~\eqref{SSS} with exponents $\alpha$ and $\beta$ given by \eqref{SSexp} and profile $f\ge 0$ solving the Cauchy problem~\eqref{SSODE}-\eqref{init.cond.ODE}. This rather long section is devoted to the properties of such profiles, proving that there exists a single one having an interface at a finite point. To fix the notation, by letting $F=f^m$ we can write the Cauchy problem~\eqref{SSODE}-\eqref{init.cond.ODE} in the following equivalent (but semilinear) form
\begin{equation}\label{ODE2}
F''(\xi)+\frac{N-1}{\xi}F'(\xi)+\alpha(F^{1/m}(\xi))-\beta\xi(F^{1/m})'(\xi)-\xi^{\sigma}F^{q/m}(\xi)=0,
\end{equation}
with initial condition
\begin{equation}\label{init.cond.ODE2}
F(0)=a^m>0, \qquad F'(0)=0.
\end{equation}
We begin our study of self-similar solutions with some basic properties of solutions to the Cauchy problem~\eqref{SSODE}-\eqref{init.cond.ODE}.

\subsection{Basic properties. Local behavior for $\xi$ small}\label{subsec.basic}

This section gathers some general properties of solutions to~\eqref{SSODE}-\eqref{init.cond.ODE} that will be used in the proofs of existence and uniqueness of the self-similar profile with interface. We include here, noticeably, the rather tedious but important local expansion of a solution to the Cauchy problem~\eqref{SSODE}-\eqref{init.cond.ODE} as $\xi\to 0$. Let $a\in(0,\infty)$. The Cauchy-Lipschitz theorem applied to the semilinear equivalent form~\eqref{ODE2} implies that the Cauchy problem \eqref{ODE2}-\eqref{init.cond.ODE2} has a unique positive solution $F(\cdot;a)\in C^2([0,\xi_{\rm max}(a))$, defined in a maximal interval $[0,\xi_{\rm max}(a))$, with the following alternative: either $\xi_{\rm max}(a)=\infty$ or
$$
\xi_{\rm max}(a)<\infty \;\;\text{ and }\;\; \lim\limits_{\xi\to\xi_{\rm max}(a)}\left[F(\xi;a)+\frac{1}{F(\xi;a)}\right]=\infty.
$$
We also set $f(\cdot;a) := F^{1/m}(\cdot;a)$ and define
\begin{equation}\label{support}
\xi_0(a):=\inf\{\xi\in(0,\xi_{\rm max}(a)):F(\xi;a)=0\}\in(0,\xi_{\rm max}(a)),
\end{equation}
so that $F(\xi;a)>0$ for any $\xi\in[0,\xi_0(a))$. Moreover, since $\sigma>0$, we deduce from Eq.~\eqref{ODE2} that
\begin{equation*}%\label{second.der}
F''(0;a)=-\frac{\alpha a}{N}<0, \qquad a\in(0,\infty),
\end{equation*}
and we can thus introduce the first zero of the derivative of $F(\cdot;a)$ (if any) and set
\begin{equation}\label{support.der}
\xi_1(a):=\sup\{\xi\in(0,\xi_0(a)):F'(\cdot;a)<0 \ {\rm on} \ (0,\xi)\}>0.
\end{equation}

\begin{lemma}\label{lem.min}
Let $a>0$. If $\xi_1(a)\in(0,\xi_0(a))$, then $F(\cdot;a)$ has a local strict minimum at $\xi=\xi_1(a)$ and
$$
\xi_1^{\sigma}(a) \geq \alpha f^{1-q}(\xi_1(a);a).
$$
\end{lemma}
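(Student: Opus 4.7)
The plan is to evaluate the ODE~\eqref{SSODE} (and, if necessary, its first derivative) at the candidate critical point $\xi_1(a)$, combined with the sign information $F'<0$ on $(0,\xi_1(a))$, in order to pin down the sign of $F''(\xi_1(a))$.

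First I would gather the pointwise data at $\xi_1:=\xi_1(a)$. By definition $F'(\xi_1)=0$, while the assumption $\xi_1<\xi_0(a)$ ensures $f(\xi_1;a)>0$. Since $F'=mf^{m-1}f'$ this forces $f'(\xi_1;a)=0$, and in turn $F''(\xi_1)=mf^{m-1}(\xi_1;a)\,f''(\xi_1;a)$. Substituting these three identities into~\eqref{SSODE} gives
$$
F''(\xi_1)=\xi_1^{\sigma}\,f^{q}(\xi_1;a)-\alpha\,f(\xi_1;a).
$$
Because $F'<0$ on $(0,\xi_1)$ while $F'(\xi_1)=0$, the mean value theorem applied to $F'$ on intervals $(\xi,\xi_1)$ produces points where $F''>0$, and letting $\xi\to\xi_1^{-}$ yields $F''(\xi_1)\geq 0$. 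Dividing by $f(\xi_1;a)>0$ in the identity above then delivers the stated inequality $\xi_1^{\sigma}(a)\geq \alpha f^{1-q}(\xi_1(a);a)$.

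To secure the \emph{strict} minimum claim I would rule out the degenerate possibility $F''(\xi_1)=0$. In that case the formula $F''(\xi_1)=m f^{m-1}(\xi_1;a) f''(\xi_1;a)$ gives also $f''(\xi_1;a)=0$, and differentiating~\eqref{SSODE} once more and evaluating at $\xi_1$ kills every term except $\sigma\xi^{\sigma-1}f^{q}$, leaving
$$
F'''(\xi_1)=\sigma\,\xi_1^{\sigma-1}\,f^{q}(\xi_1;a)>0.
$$
Hence $F''$ would be strictly increasing at $\xi_1$, so $F''<0$ on some left neighborhood of $\xi_1$, which combined with $F'(\xi_1)=0$ would force $F'>0$ immediately to the left of $\xi_1$, contradicting $F'<0$ on $(0,\xi_1)$. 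Therefore $F''(\xi_1)>0$, and this, together with $F'(\xi_1)=0$ and $F'<0$ on a left neighborhood, yields a strict local minimum of $F(\cdot;a)$ at $\xi_1(a)$.

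The only mildly delicate step is the contradiction argument ruling out $F''(\xi_1)=0$: one must verify that under that additional assumption every term but $\sigma\xi^{\sigma-1}f^{q}$ of the once-differentiated ODE vanishes at $\xi_1$. This rests on the elementary chain of implications $F'(\xi_1)=0 \Rightarrow f'(\xi_1;a)=0$ and $F''(\xi_1)=0 \Rightarrow f''(\xi_1;a)=0$, both valid because $f(\xi_1;a)>0$; no tools beyond~\eqref{SSODE} itself and the definition of $\xi_1(a)$ are needed.
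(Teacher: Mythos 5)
Your proof is correct and follows essentially the same route as the paper's: evaluate the ODE at $\xi_1(a)$ using $F'(\xi_1)=f'(\xi_1)=0$ to obtain $F''(\xi_1)=\xi_1^{\sigma}f^{q}(\xi_1)-\alpha f(\xi_1)$, deduce $F''(\xi_1)\ge 0$ from the sign of $F'$ to the left, and differentiate the ODE once to get $F'''(\xi_1)=\sigma\xi_1^{\sigma-1}f^{q}(\xi_1)>0$ in the borderline case. The only (harmless) difference is that you use this to show the case $F''(\xi_1)=0$ cannot occur at all, via the sign of $F''$ on a \emph{left} neighborhood contradicting the definition of $\xi_1(a)$, whereas the paper keeps that case and concludes the strict minimum from the positivity of $F''$ on a \emph{right} neighborhood; both arguments are valid.
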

\begin{proof}
Since $\xi_1(a)<\xi_0(a)$ and $F'(\xi;a)<0=F'(\xi_1(a);a)$ for any $\xi\in(0,\xi_1(a))$ (the latter follows from the definition \eqref{support.der} of $\xi_1(a)$), we conclude that $F''(\xi_1(a);a)\geq0$, and we infer by evaluating Eq. \eqref{ODE2} at $\xi=\xi_1(a)$ that
$$
0\leq F''(\xi_1(a);a)=f^{q}(\xi_1(a);a)\left[ \xi_1^{\sigma}(a) - \alpha f^{1-q}(\xi_1(a);a) \right].
$$
Either $\xi_1^{\sigma}(a)>\alpha f^{1-q}(\xi_1(a);a)$ and thus $F'(\cdot;a)>0$ in a right neighborhood of $\xi_1(a)$, which gives that $\xi_1(a)$ is a strict local minimum for $F(\cdot;a)$. Or $\xi_1^{\sigma}(a)=\alpha f^{1-q}(\xi_1(a);a)$ and thus $F''(\xi_1(a);a)=0$. Differentiating one more time Eq. \eqref{ODE2} with respect to $\xi$ and evaluating the resulting equation at $\xi=\xi_1(a)$, we get
\begin{equation*}
\begin{split}
F'''(\xi_1(a);a)&=-\frac{N-1}{\xi_1(a)}F''(\xi_1(a);a)+\frac{N-1}{\xi_1(a)^2}F'(\xi_1(a);a)\\&+(\beta-\alpha)f'(\xi_1(a);a)+\beta\xi_1(a)f''(\xi_1(a);a)\\
&+q\xi_1^{\sigma}(a) f^{q-1}(\xi_1(a);a) f'(\xi_1(a);a) + \sigma \xi_1^{\sigma-1}(a) f^q(\xi_1(a);a).
\end{split}
\end{equation*}
Noticing that
$$
f'(\xi_1(a);a)=\frac{1}{m}F^{(1-m)/m}(\xi_1(a);a) F'(\xi_1(a);a)=0
$$
and
\begin{align*}
f''(\xi_1(a);a) & = \frac{1}{m} F^{(1-m)/m}(\xi_1(a);a) F''(\xi_1(a);a) \\
&\qquad +\frac{1-m}{m^2} F^{(1-2m)/m}(\xi_1(a);a) |F'|^2(\xi_1(a);a) =0,
\end{align*}
we readily conclude that
$$
F'''(\xi_1(a);a)=\sigma\xi_1^{\sigma-1}(a) f^q(\xi_1(a);a)>0.
$$
Consequently, $F''(\cdot;a)>0$ in a right neighborhood of $\xi_1(a)$ and $F'(\cdot;a)$ also shares this property. It thus follows that $\xi_1(a)$ is a strict local minimum also in this case.
\end{proof}

We move now to the study of the local behavior of solutions $f(\cdot;a)$ in a small right neighborhood $\xi\in(0,\delta)$ of the origin. This is rather technical, but it will become a decisive argument in the proof of the uniqueness of the self-similar profile, in the same way as such a local asymptotic expansion has been used in previous works \cite{IL13, IMS22b}. The difficulty comes from the fact that the influence of the weight $|x|^{\sigma}$ in this expansion might come at very high order terms. The idea is then to prove that the local expansion of $f(\cdot;a)$ as $\xi\to0$ fully coincides with the one of the solution to the porous medium equation without any source, up to the (high) order where the influence of $|x|^{\sigma}$ appears, and give a precise form of the first coefficient where this influence is seen. To this end, let $a\in(0,\infty)$. Arguing as for~\eqref{ODE2}-\eqref{init.cond.ODE2}, there exists a unique positive solution $\Phi_a\in C^{\infty}([0,\xi_{\infty}(a)))$ of the Cauchy problem
\begin{equation}\label{ODE.PME}
\Phi_a''(\xi)+\frac{N-1}{\xi}\Phi_a'(\xi)+\alpha\phi_a(\xi)-\beta\xi\phi_a'(\xi)=0,
\end{equation}
with initial condition and notation
\begin{equation}\label{init.cond.ODE.PME}
\Phi_a(0)=a^m, \qquad \Phi_a'(0)=0, \qquad \phi_a:=\Phi_a^{1/m},
\end{equation}
defined on a maximal interval $[0,\xi_{\infty}(a))$. Introducing the rescaled function
$$
\chi(\xi) := a \phi_1(a^{-(m-1)/2}\xi), \qquad \xi\in \big[0, a^{(m-1)/2 \xi_\infty(1) } \big),
$$
we readily notice that $\chi$ also solves the Cauchy problem~\eqref{ODE.PME}-\eqref{init.cond.ODE.PME}. We infer from the well-posedness of~\eqref{ODE.PME}-\eqref{init.cond.ODE.PME} that $\xi_\infty(a) = a^{(m-1)/2} \xi_\infty(1)$ and
\begin{equation}\label{interm5}
\phi_a(\xi)=a\phi_1(a^{-(m-1)/2}\xi), \quad \Phi_a(\xi)=a^m\Phi_1(a^{-(m-1)/2}\xi), \qquad \xi\in [0,\xi_\infty(a)).
\end{equation}
Since $\Phi_1\in C^{\infty}([0,\xi_{\infty}(1))$ with $\Phi_1(0)=1$, the same holds true for $\phi_1=\Phi_1^{1/m}$, thus we can write, as $\xi\to0$, the Taylor expansions
\begin{equation}\label{Taylor.PME}
\phi_1(\xi)=\sum\limits_{j=0}^k\omega_j\xi^{j}+o(\xi^k), \qquad \Phi_1(\xi)=\sum\limits_{j=0}^k\Omega_j\xi^{j}+o(\xi^k)
\end{equation}
for any $k\in\mathbb{N}$, with
$$
\omega_j := \frac{1}{j!}\phi_1^{(j)}(0), \qquad \Omega_j := \frac{1}{j!} \Phi_1^{(j)}(0).
$$
Note that $\omega_1=\Omega_1=0$ by \eqref{init.cond.ODE.PME} while~\eqref{ODE.PME} and~\eqref{init.cond.ODE.PME} ensure that
\begin{equation}
	\omega_2 = - \frac{\alpha}{2mN}, \qquad \Omega_2 = - \frac{\alpha}{2N}. \label{z1}
\end{equation}
We then find by straightforward calculations that, for $k\geq2$ and as $\xi\to0$, the expansions~\eqref{Taylor.PME} up to orders $k+2$ and $k$ give
\begin{equation}\label{interm6}
\begin{split}
&\Phi_1'(\xi)=\sum\limits_{j=1}^{k+2}j\Omega_j\xi^{j-1}+o(\xi^{k+1}),\\
&\Phi_1''(\xi)=\sum\limits_{j=0}^{k}(j+2)(j+1)\Omega_{j+2}\xi^{j}+o(\xi^{k}),\\
&\frac{\Phi_1'(\xi)}{\xi}=\sum\limits_{j=0}^{k}(j+2)\Omega_{j+2}\xi^{j}+o(\xi^{k}),\\
&\xi\phi_1'(\xi)=\sum\limits_{j=1}^{k}j\omega_j\xi^{j}+o(\xi^{k})
\end{split}
\end{equation}
We deduce from gathering the expansions in \eqref{interm6} and inserting them into Eq.~\eqref{ODE.PME} that
$$
(j+2)(j+1)\Omega_{j+2}+(N-1)(j+2)\Omega_{j+2}+\alpha\omega_j-\beta j\omega_j=0,
$$
for any $1\leq j\leq k$, leading to
\begin{equation}\label{recurrence}
\Omega_{j+2}=\frac{j\beta-\alpha}{(j+2)(N+j)}\omega_j,
\end{equation}
valid for $1\leq j\leq k$. But \eqref{recurrence} is also valid for $j=0$ by~\eqref{z1}. Thus \eqref{recurrence} is valid for any integer $j\geq0$. We are now in a position to state our first lemma about the local expansion as $\xi\to0$ of the functions $F(\cdot;a)$ and $f(\cdot;a)$.

\begin{lemma}\label{lem.expPME}
Let $a>0$ and $F(\cdot;a)$, respectively $f(\cdot;a)$, be the solutions to the Cauchy problems \eqref{ODE2}-\eqref{init.cond.ODE2}, respectively \eqref{SSODE}-\eqref{init.cond.ODE}. Let $\Phi_a$ be the solution to the Cauchy problem \eqref{ODE.PME}-\eqref{init.cond.ODE.PME} and $\phi_a=\Phi_a^{1/m}$. Then, for any integer $k$ such that $2\leq k<2+\sigma$, we have
\begin{equation}\label{exp.PME}
F(\xi;a)-\Phi_a(\xi)=o(\xi^k), \qquad f(\xi;a)-\phi_a(\xi)=o(\xi^{k}), \qquad {\rm as} \ \xi\to0.
\end{equation}
\end{lemma}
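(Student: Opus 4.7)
The plan is to control the difference $G(\xi) := F(\xi;a) - \Phi_a(\xi)$ directly by deriving the ODE it satisfies and bootstrapping to show that $G(\xi) = O(\xi^{\sigma+2})$ as $\xi \to 0$, which immediately yields $G(\xi) = o(\xi^k)$ for every integer $k$ with $2 \leq k < \sigma + 2$. The companion estimate for $f(\cdot;a) - \phi_a$ will then follow because $F(\cdot;a)$ and $\Phi_a$ are both continuous at $\xi = 0$ with common value $a^m > 0$, so on a small interval $[0,\delta]$ both are bounded below by $a^m/2$, and on such a set the map $z \mapsto z^{1/m}$ is Lipschitz; thus $|f - \phi_a| \leq K|G|$ on $[0,\delta]$ for some constant $K > 0$.

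Subtracting \eqref{ODE.PME} from \eqref{ODE2}, $G$ satisfies
$$G''(\xi) + \frac{N-1}{\xi} G'(\xi) = -\alpha(f(\xi;a) - \phi_a(\xi)) + \beta\xi(f'(\xi;a) - \phi_a'(\xi)) + \xi^\sigma f^q(\xi;a),$$
together with $G(0) = G'(0) = 0$. Multiplying by $\xi^{N-1}$ and integrating from $0$ to $\xi$ produces an expression for $\xi^{N-1} G'(\xi)$. The derivative term $\int_0^\xi s^N (f'-\phi_a')(s)\,ds$ is troublesome because I have no pointwise control on $f' - \phi_a'$. I would handle it via integration by parts, using $(f-\phi_a)(0) = 0$, to rewrite it as $\xi^N (f - \phi_a)(\xi) - N \int_0^\xi s^{N-1}(f-\phi_a)(s)\,ds$, so that only $f - \phi_a$ (equivalently, $G$) and the explicit source $s^\sigma f^q(s;a)$ remain on the right-hand side.

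A further integration from $0$ to $\xi$, combined with the Lipschitz bound $|f - \phi_a| \leq K|G|$ and the boundedness of $f^q(\cdot;a)$ on $[0,\delta]$, produces an integral inequality of Volterra type,
$$|G(\xi)| \leq C\,\xi^2 \sup_{s \in [0,\xi]} |G(s)| + C'\,\xi^{\sigma + 2}, \qquad \xi \in [0, \delta],$$
where the $\xi^{\sigma+2}$ contribution comes from explicitly computing $\int_0^\xi t^{1-N}\int_0^t s^{N-1+\sigma}\,ds\,dt$. Restricting to an even smaller interval on which $C\xi^2 \leq 1/2$ and taking the supremum of the left-hand side over $[0,\xi]$, one absorbs the first term and concludes $\sup_{[0,\xi]} |G| \leq 2C'\,\xi^{\sigma+2}$, which is the claimed estimate.

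The main obstacle, as anticipated above, is the derivative-valued term $\beta\xi(f'-\phi_a')$ appearing in the ODE for $G$: without additional a priori control it would destroy the bootstrap. The integration-by-parts reformulation converts it into a boundary contribution proportional to $\xi^N(f-\phi_a)(\xi)$ plus an integral in $f - \phi_a$, and this trick is exactly what allows the estimate to close into a Volterra inequality amenable to an absorption/Gronwall argument.
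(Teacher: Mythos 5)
Your argument is correct, and it takes a genuinely different route from the paper. The paper sets $E=F-\Phi_a$, uses Taylor expansions (with a case split on $\sigma\in\mathbb{N}$ versus $\sigma\notin\mathbb{N}$) to get the base case $E=o(\xi^2)$, $E'=o(\xi)$, and then runs an induction gaining two orders per step: assuming $E=o(\xi^k)$ and $E'=o(\xi^{k-1})$, it shows each term on the right-hand side of the equation for $E$ is $o(\xi^k)$ by expanding $(1+E/\Phi_a)^{1/m}$ pointwise --- the term $\beta\xi(F^{1/m}-\Phi_a^{1/m})'$ is precisely where the induction hypothesis on $E'$ is needed --- and then integrates $\frac{d}{d\xi}\big[\xi^{N-1}E'\big]=o(\xi^{N+k-1})$ twice. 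Your integration by parts, which turns $\int_0^\xi s^N(f-\phi_a)'\,ds$ into $\xi^N(f-\phi_a)(\xi)-N\int_0^\xi s^{N-1}(f-\phi_a)\,ds$, removes the derivative term altogether (it is the same device the paper uses later via the function $H$ in \eqref{interm11}, but applied here to the difference), so no a priori control on $G'$ is needed and the whole estimate closes in a single Volterra/absorption step. What this buys: no induction, no case split on the integrality of $\sigma$, no need for the preliminary Taylor expansions \eqref{exp.F1}--\eqref{exp.f2}, and a slightly stronger conclusion $G(\xi)=O(\xi^{\sigma+2})$, which indeed implies \eqref{exp.PME} for every integer $2\le k<2+\sigma$; the Lipschitz transfer to $f-\phi_a$ is legitimate since $F$ and $\Phi_a$ are bounded below by $a^m/2$ near $\xi=0$ and $z\mapsto z^{1/m}$ is Lipschitz away from zero. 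What it does not give by itself is the identification of the coefficient of $\xi^{\sigma+2}$, which the paper extracts in Lemma~\ref{lem.expsigma} and needs for the uniqueness proof; but that is a separate statement, and your bound is exactly what Lemma~\ref{lem.expPME} claims.
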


\begin{proof}
We drop in the sequel for simplicity the explicit dependence on $a$ from the notation of $F$ and $f$, understanding that $a>0$ is fixed. Let us define $E=F-\Phi_a$ and observe that Eq.~\eqref{ODE2} ensures that:

$\bullet$ if $\sigma\not\in\mathbb{N}$, then $F\in C^{2+k_0,\sigma-k_0}([0,\xi_{\infty}(a)))$, where $k_0\in\mathbb{N}$ is the integer part of $\sigma$, and we have the following expansion as $\xi\to0$
\begin{equation}\label{exp.F1}
F(\xi)=\sum\limits_{j=0}^{k_0+2}B_j\xi^{j}+o(\xi^{k_0+2}), \qquad B_0=a^m, \ B_1=0, \ B_2=-\frac{\alpha a}{2N},
\end{equation}
and respectively
\begin{equation}\label{exp.f1}
f(\xi)=\sum\limits_{j=0}^{k_0+2}b_j\xi^{j}+o(\xi^{k_0+2}) \qquad {\rm as} \ \xi\to0.
\end{equation}

$\bullet$ if $\sigma\in\mathbb{N}$, then $F\in C^{\infty}([0,\xi_{\infty}(a)))$ and, setting $k_0=\sigma-1$, we get
\begin{equation}\label{exp.F2}
F(\xi)=\sum\limits_{j=0}^{k_0+3}B_j\xi^{j}+o(\xi^{k_0+3}), \qquad B_0=a^m, \ B_1=0, \ B_2=-\frac{\alpha a}{2N},
\end{equation}
and respectively
\begin{equation}\label{exp.f2}
f(\xi)=\sum\limits_{j=0}^{k_0+3}b_j\xi^{j}+o(\xi^{k_0+3}) \qquad {\rm as} \ \xi\to0.
\end{equation}

In both cases, we deduce from \eqref{z1}, \eqref{exp.F1}, and \eqref{exp.F2} that $E(\xi)=o(\xi^2)$, $E'(\xi)=o(\xi)$ as $\xi\to0$. We can now go on with an argument by induction and assume that $E'(\xi)=o(\xi^{k-1})$ for some integer $k\in[2,\sigma)$ and thus, also $E(\xi)=o(\xi^k)$ as $\xi\to0$. We infer from~\eqref{ODE2} and~\eqref{ODE.PME} that
\begin{equation}\label{interm7}
\begin{split}
E''(\xi)+\frac{N-1}{\xi}E'(\xi)&=-\alpha(F^{1/m}(\xi)-\Phi_a^{1/m}(\xi))\\
&+\beta\xi(F^{1/m}-\Phi_a^{1/m})'(\xi)+\xi^{\sigma}F^{q/m}(\xi).
\end{split}
\end{equation}
We then analyze one by one the orders of the terms in the right hand side of \eqref{interm7}, employing the induction hypothesis. We have
\begin{equation*}
\begin{split}
F^{1/m}(\xi)-\Phi_a^{1/m}(\xi)&=(E+\Phi_a)^{1/m}(\xi)-\Phi_a^{1/m}(\xi)\\
&=\Phi_a^{1/m}(\xi)\left[\left(1+\frac{E}{\Phi_a}\right)^{1/m}(\xi)-1\right]\\
&=\Phi_a^{1/m}(\xi)\left[\frac{E(\xi)}{m\Phi_a(\xi)}+o(E(\xi))\right]=o(\xi^k),
\end{split}
\end{equation*}
and also
\begin{equation*}
\begin{split}
\xi(F^{1/m}-\Phi_a^{1/m})'&(\xi)=\xi\left\{\Phi_a^{1/m}(\xi)\left[\left(1+\frac{E}{\Phi_a}\right)^{1/m}(\xi)-1\right]\right\}'\\
&=\xi\phi_a'(\xi)\left[\left(1+\frac{E}{\Phi_a}\right)^{1/m}(\xi)-1\right]\\
&+\frac{1}{m}\xi\phi_a(\xi)\left(1+\frac{E}{\Phi_a}\right)^{(1-m)/m}(\xi)\left[\frac{E'(\xi)}{\Phi_a(\xi)}-\frac{E(\xi)\Phi_a'(\xi)}{\Phi_a(\xi)^2}\right]\\
&=\xi\phi_a'(\xi)\left[\frac{E(\xi)}{m\Phi_a(\xi)}+o(E(\xi))\right]\\
&+\frac{\xi\phi_a(\xi)}{m\Phi_a(\xi)^2}\left(1+\frac{E(\xi)}{\Phi_a(\xi)}\right)^{(1-m)/m}(\Phi_aE'-\Phi_a'E)(\xi)\\
&=\xi\left[-\frac{a\alpha}{N}\xi+o(\xi)\right]\left[\frac{E(\xi)}{m\Phi_a(\xi)}+o(E(\xi))\right]\\
&+\frac{\xi}{m}\frac{\phi_a(\xi)}{\Phi_a^2(\xi)}\left(1+\frac{E(\xi)}{\Phi_a(\xi)}\right)^{(1-m)/m}\left[(a^m+o(\xi))E'(\xi)+\left(\frac{a\alpha}{N}\xi+o(\xi)\right)E(\xi)\right]\\
&=\xi^2E(\xi)+o(\xi^2E(\xi))+\frac{\xi}{ma^{2m-1}}\left[a^mE'(\xi)+\frac{a\alpha}{N}\xi E(\xi)+o(\xi^{k})\right]\\
&=o(\xi^k).
\end{split}
\end{equation*}
Since $k<\sigma$, we also have
$$
\xi^{\sigma}F^{q/m}(\xi)=o(\xi^k),
$$
and we infer from~\eqref{interm7} and the previous estimates that
$$
E''(\xi)+\frac{N-1}{\xi}E'(\xi)=o(\xi^k),
$$
or equivalently
$$
\frac{d}{d\xi}\left[\xi^{N-1}E'(\xi)\right]=o(\xi^{N+k-1}),
$$
which by integration leads to $E'(\xi)=o(\xi^{k+1})$ and thus $E(\xi)=o(\xi^{k+2})$ as $\xi\to0$. Thus the induction argument is complete, proving that
$$
E(\xi)=o(\xi^k), \qquad {\rm for \ any} \ k\in\mathbb{N}, \ 2\leq k<2+\sigma,
$$
which is equivalent to the first equality in \eqref{exp.PME}. Next, as $\xi\to0$ we also have
\begin{equation*}
\begin{split}
(f-\phi_a)(\xi)&=F^{1/m}(\xi)-\Phi_a^{1/m}(\xi)=(\Phi_a+E)^{1/m}(\xi)-\Phi_a^{1/m}(\xi)\\
&=\Phi_a(\xi)\left[\left(1+\frac{E(\xi)}{\Phi_a(\xi)}\right)^{1/m}-1\right]\\
&=\Phi_a(\xi)\left[\frac{E(\xi)}{m\Phi_a(\xi)}+o(E(\xi))\right]=o(\xi^{k}),
\end{split}
\end{equation*}
for any integer $k$ such that $2\leq k<2+\sigma$ and the proof is complete.
\end{proof}
In particular, recalling the Taylor expansions as $\xi\to0$ of $F(\xi)$ and $f(\xi)$ introduced in \eqref{exp.F1} and \eqref{exp.F2}, respectively \eqref{exp.f1} and \eqref{exp.f2}, we readily infer from~\eqref{interm5}, \eqref{Taylor.PME}, and Lemma~\ref{lem.expPME} that
\begin{equation}\label{interm8}
B_j=a^{m-j(m-1)/2}\Omega_j, \qquad b_j=a^{1-j(m-1)/2}\omega_j, \qquad 2\leq j\leq k_0+2,
\end{equation}
where $k_0$ is the largest integer strictly below $\sigma$ and $\Omega_j$, $\omega_j$ are the coefficients introduced in the expansions in \eqref{Taylor.PME}. Notice that all these coefficients do not depend on $\sigma$. We now proceed with the calculation of the next order of the expansion near $\xi=0$, where the influence of $\sigma$ comes into play.

\begin{lemma}\label{lem.expsigma}
Let $k_0$ be the largest integer strictly below $\sigma$. Then, as $\xi\to0$,
\begin{equation}\label{exp.smallF1}
F(\xi;a)=\sum\limits_{j=0}^{k_0+2}B_j\xi^j+\frac{a^q}{(\sigma+2)(\sigma+N)}\xi^{\sigma+2}+o(\xi^{\sigma+2}),
\end{equation}
if $\sigma\not\in\mathbb{N}$ and $k_0$ is the integer part of $\sigma$, or
\begin{equation}\label{exp.smallF2}
F(\xi;a)=\sum\limits_{j=0}^{k_0+3}B_j\xi^j+\frac{a^q}{(\sigma+2)(\sigma+N)}\xi^{\sigma+2}+o(\xi^{\sigma+2}),
\end{equation}
if $\sigma\in\mathbb{N}$ and $k_0=\sigma-1$.
\end{lemma}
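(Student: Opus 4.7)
My plan is to refine the estimate $E(\xi) := F(\xi;a) - \Phi_a(\xi) = o(\xi^k)$ for $2 \le k < \sigma+2$ obtained in Lemma~\ref{lem.expPME} and capture the precise contribution of the absorption term $\xi^\sigma F^{q/m}$ to the coefficient of $\xi^{\sigma+2}$. The starting point is the ODE satisfied by $E$, obtained by subtracting \eqref{ODE.PME} from \eqref{ODE2}:
\begin{equation*}
E''(\xi) + \frac{N-1}{\xi}\, E'(\xi) = -\alpha \bigl[ f(\xi;a) - \phi_a(\xi) \bigr] + \beta \xi \bigl[ f(\cdot;a) - \phi_a \bigr]'(\xi) + \xi^\sigma F^{q/m}(\xi;a).
\end{equation*}

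First, I would show that the two ``old'' terms on the right are $o(\xi^\sigma)$ as $\xi\to 0$. Expanding $f - \phi_a = (\Phi_a + E)^{1/m} - \Phi_a^{1/m}$ in powers of $E/\Phi_a$ near $\xi=0$ -- exactly the mechanism used in the proof of Lemma~\ref{lem.expPME} -- yields $f - \phi_a \sim E/(m a^{m-1})$ together with an analogous leading-order identity for $(f-\phi_a)'$. Combining this with $E(\xi) = o(\xi^\sigma)$ from Lemma~\ref{lem.expPME} and the sharper derivative bound $E'(\xi) = o(\xi^{\sigma-1+\varepsilon})$ for some $\varepsilon>0$ that is actually produced inside the induction of that proof makes both these contributions $o(\xi^\sigma)$. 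Meanwhile, the continuity of $f$ at $0$ together with $f(0;a)=a$ gives $\xi^\sigma F^{q/m}(\xi;a) = a^q \xi^\sigma + o(\xi^\sigma)$. Gathering these estimates produces
\begin{equation*}
\bigl( \xi^{N-1} E'(\xi) \bigr)' = a^q\, \xi^{N-1+\sigma} + o(\xi^{N-1+\sigma}), \qquad \xi \to 0.
\end{equation*}

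Two successive integrations from $0$ to $\xi$, using $E(0) = E'(0) = 0$, then yield
\begin{equation*}
E(\xi) = \frac{a^q}{(\sigma+2)(\sigma+N)}\,\xi^{\sigma+2} + o(\xi^{\sigma+2}).
\end{equation*}
Adding back the Taylor expansion of $\Phi_a$ up to order $k_0+2$ (respectively $k_0+3$), whose coefficients provide precisely the $B_j$ listed in \eqref{exp.F1}--\eqref{exp.F2} via \eqref{interm5}, \eqref{Taylor.PME} and \eqref{interm8}, gives exactly \eqref{exp.smallF1} and \eqref{exp.smallF2}.

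The main obstacle is the sharp bookkeeping at order $\xi^\sigma$: one must verify that the ``old'' contributions $-\alpha(f-\phi_a) + \beta\xi(f-\phi_a)'$ do not perturb the $\xi^\sigma$ forcing, which hinges on the paired bounds $E(\xi) = o(\xi^{\sigma+\varepsilon})$ and $E'(\xi) = o(\xi^{\sigma-1+\varepsilon})$ -- both to be read off the induction mechanics of the previous proof rather than only from its statement, especially for small values of $\sigma$ where the induction in Lemma~\ref{lem.expPME} barely starts. A minor additional subtlety arises when $\sigma \in \mathbb{N}$, since $\xi^{\sigma+2}$ then coincides with the integer power $\xi^{k_0+3}$ already present in the smooth Taylor expansion \eqref{exp.F2}; in that case the computed coefficient $a^q/[(\sigma+2)(\sigma+N)]$ must be identified with $B_{k_0+3}$.
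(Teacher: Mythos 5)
Your proposal is correct and would prove the lemma, but it is organized differently from the paper's argument. The paper does not work with $E=F-\Phi_a$ at this stage; instead it introduces the auxiliary function $H(\xi)=\xi^{N-1}F'(\xi)-\beta\xi^N f(\xi)$, whose derivative collapses the ODE to the exact identity $H'=-(\alpha+N\beta)\xi^{N-1}f+\xi^{\sigma+N-1}f^q$, substitutes the Taylor expansion of $f$ (already identified with that of $\phi_a$ up to the relevant order by Lemma~\ref{lem.expPME}), and integrates twice, re-deriving all the coefficients $B_j$ via the recurrence \eqref{recurrence} and \eqref{interm8}. You instead keep the decomposition $F=\Phi_a+E$ of Lemma~\ref{lem.expPME} and run one more integration of the equation \eqref{interm7} for $E$, this time retaining the forcing $\xi^\sigma f^q=a^q\xi^\sigma+o(\xi^\sigma)$ rather than discarding it as $o(\xi^k)$. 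Your route is arguably more economical: the lower-order coefficients come for free from $\Phi_a$, and you only have to check that the linearized terms $-\alpha(f-\phi_a)+\beta\xi(f-\phi_a)'$ are $o(\xi^\sigma)$ -- which indeed follows from the bounds $E=o(\xi^{k_0+2})$ and $E'=o(\xi^{k_0+1})$ (with $k_0+1\ge\sigma$) produced by the induction of Lemma~\ref{lem.expPME}, or from its base case $E=o(\xi^2)$, $E'=o(\xi)$ when $\sigma\le 2$, exactly as you anticipate. The paper's $H$-based computation has the advantage of being exact (no binomial expansion of $(\Phi_a+E)^{1/m}$ is needed), at the price of redoing the coefficient identification. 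One small correction to your closing remark: when $\sigma\in\mathbb{N}$, the coefficient $a^q/[(\sigma+2)(\sigma+N)]$ is \emph{not} to be identified with $B_{k_0+3}$; in \eqref{exp.smallF2} the two appear as separate summands, $B_{k_0+3}$ being the $(k_0+3)$-th Taylor coefficient of $\Phi_a$ (i.e. $a^{m-(k_0+3)(m-1)/2}\Omega_{k_0+3}$), and the true coefficient of $\xi^{\sigma+2}$ in $F$ is their sum -- which is precisely what your decomposition $F=\Phi_a+E$ delivers without further adjustment.
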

\begin{proof}
We introduce
\begin{equation}\label{interm11}
H(\xi;a):=\xi^{N-1}F'(\xi;a)-\beta\xi^{N}f(\xi;a), \qquad \xi\in[0,\xi_{\infty}(a)).
\end{equation}
Dropping again the explicit dependence on $a>0$ in order to simplify the notation and differentiating with respect to $\xi$, we infer from \eqref{ODE2} that
\begin{equation}\label{interm9}
\begin{split}
H'(\xi)&=\xi^{N-1}\left[F''(\xi)+\frac{N-1}{\xi}F'(\xi)-\beta\xi f'(\xi)\right]-\beta N\xi^{N-1}f(\xi)\\
&=-(\alpha+N\beta)\xi^{N-1}f(\xi)+\xi^{\sigma+N-1}f^q(\xi).
\end{split}
\end{equation}
We now split the analysis according to whether $\sigma>0$ is an integer or not.

\medskip

\noindent \textbf{Case~1. $\sigma\not\in\mathbb{N}$}. We deduce from~\eqref{exp.f1} and~\eqref{interm9} that
\begin{equation*}
\begin{split}
H'(\xi)&=-(\alpha+N\beta)\sum\limits_{j=0}^{k_0+2}b_j\xi^{N+j-1}+o(\xi^{N+k_0+1})+\xi^{N+\sigma-1}(b_0+b_2\xi^2+o(\xi^2))^q\\
&=-(\alpha+N\beta)\sum\limits_{j=0}^{k_0+2}b_j\xi^{N+j-1}+o(\xi^{N+k_0+1})+b_0^q\xi^{N+\sigma-1}\left[1+\frac{b_2}{b_0}\xi^2+o(\xi^2)\right]^q\\
&=-(\alpha+N\beta)\sum\limits_{j=0}^{k_0+2}b_j\xi^{N+j-1}+o(\xi^{N+k_0+1})+b_0^q\xi^{N+\sigma-1}+o(\xi^{N+\sigma})\\
&=-(\alpha+N\beta)\sum\limits_{j=0}^{k_0}b_j\xi^{N+j-1}+b_0^q\xi^{N+\sigma-1}+o(\xi^{N+\sigma-1})
\end{split}
\end{equation*}
and we further get by integration that
$$
H(\xi)=-(\alpha+N\beta)\sum\limits_{j=0}^{k_0}\frac{b_j}{N+j}\xi^{N+j}+\frac{b_0^q}{N+\sigma}\xi^{N+\sigma}+o(\xi^{N+\sigma}).
$$
Recalling the definition of $H(\xi)$, we obtain
$$
\xi^{N-1}F'(\xi)=\beta\xi^Nf(\xi)-(\alpha+N\beta)\sum\limits_{j=0}^{k_0}\frac{b_j}{N+j}\xi^{N+j}+\frac{b_0^q}{N+\sigma}\xi^{N+\sigma}+o(\xi^{N+\sigma})
$$
or equivalently, thanks to~\eqref{exp.f1},
$$
F'(\xi)=\sum\limits_{j=0}^{k_0}\left(\beta-\frac{\alpha+N\beta}{N+j}\right)b_j\xi^{j+1}+\frac{b_0^q}{N+\sigma}\xi^{\sigma+1}+o(\xi^{\sigma+1}),
$$
which leads to \eqref{exp.smallF1} by one more step of integration and taking into account the relations between the coefficients $B_j$, $b_j$, $\Omega_j$ and $\omega_j$ obtained by gathering~\eqref{recurrence} and~\eqref{interm8}.

\medskip

\noindent \textbf{Case 2. $\sigma\in\mathbb{N}$}. We proceed in a similar manner as in Case~1 and infer from~\eqref{exp.f2} and~\eqref{interm9} that
\begin{equation*}
\begin{split}
H'(\xi)&=-(\alpha+N\beta)\sum\limits_{j=0}^{k_0+3}b_j\xi^{N+j-1}+o(\xi^{N+k_0+2})+b_0^q\xi^{k_0+N}\left[1+\frac{b_2b_0^{q-1}}{q}\xi^2+o(\xi^2)\right]\\
&=-(\alpha+N\beta)\sum\limits_{j=0}^{k_0+1}b_j\xi^{N+j-1}+b_0^q\xi^{k_0+N}+o(\xi^{k_0+N}),
\end{split}
\end{equation*}
hence by integrating with respect to $\xi$ and separating $F'(\xi)$ from the definition of $H(\xi)$, we obtain
$$
F'(\xi)=\sum\limits_{j=0}^{k_0+1}\frac{j\beta-\alpha}{N+j}b_j\xi^{j+1}+\frac{b_0^q}{N+k_0+1}\xi^{k_0+2}+o(\xi^{k_0+2}).
$$
We get~\eqref{exp.smallF2} by integrating once more with respect to $\xi$, using the relations between the coefficients $B_j$, $b_j$, $\Omega_j$ and $\omega_j$ obtained by gathering~\eqref{recurrence} and~\eqref{interm8} and recalling that $\sigma=k_0+1$ in this case.
\end{proof}
We are now ready to proceed to the proof of existence of a self-similar solution to Eq.~\eqref{eq1}.

\subsection{Existence of a self-similar solution}\label{subsec.exist}

Recalling the definitions~\eqref{support} and~\eqref{support.der} of $\xi_0(a)$ and $\xi_1(a)$, we introduce the following three sets
\begin{equation*}
\begin{split}
&\mathcal{A}:=\{a>0:\xi_0(a)<\infty \ {\rm and} \ (f^m)'(\xi;a)<0, \ {\rm for} \ \xi\in(0,\xi_0(a)]\}\\
&\mathcal{C}:=\{a>0:\xi_1(a)<\xi_0(a)\},\\
&\mathcal{B}:=(0,\infty)\setminus(\mathcal{A}\cup\mathcal{C}).
\end{split}
\end{equation*}
We prove first that the sets $\mathcal{A}$ and $\mathcal{C}$ are both non-empty and open. To this end, let us introduce $g(\cdot;a)$ defined by the following rescaling
\begin{equation}\label{exist.resc}
f(\xi;a)=ag(a^{\gamma}\xi;a), \qquad \eta=a^{\gamma}\xi,
\end{equation}
where $\gamma$ will be fixed later. We obtain by plugging the rescaling~\eqref{exist.resc} into~\eqref{ODE2} that $g(\cdot;a)$ solves the ordinary differential equation
\begin{subequations}\label{ODE.resc}
\begin{equation} \label{ODE.resc1}
\begin{split}
(g^m)''(\eta)+\frac{N-1}{\eta}(g^m)'(\eta)& +a^{1-m-2\gamma}(\alpha g(\eta)-\beta\eta g'(\eta)) \\
& \qquad -a^{q-m-\gamma(\sigma+2)}\eta^{\sigma}g^q(\eta)=0,
\end{split}
\end{equation}
with initial conditions
\begin{equation}\label{ODE.resc2}
g(0)=1, \quad g'(0)=0.
\end{equation}
\end{subequations}
Here and in the sequel we drop the explicit dependence on $a>0$ from the notation $g(\cdot;a)$ for simplicity.

\begin{lemma}\label{lem.A}
The set $\mathcal{A}$ is open. Moreover, there exists $a_*>0$ such that $(0,a_*)\subseteq\mathcal{A}$.
\end{lemma}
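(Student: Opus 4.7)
The plan hinges on the rescaling~\eqref{exist.resc} with the distinguished exponent $\gamma := (1-m)/2$. With this choice, the coefficient $a^{1-m-2\gamma}$ of the drift terms in~\eqref{ODE.resc1} equals $1$, while the absorption coefficient $a^{q-m-\gamma(\sigma+2)}$ reduces to $a^{\kappa}$ with
\[
\kappa := \frac{(m-1)\sigma - 2(1-q)}{2},
\]
and $\kappa > 0$ thanks to the assumption~\eqref{range.exp}. As $a\to 0^+$ the absorption term disappears and~\eqref{ODE.resc1} converges formally to the PME-drift equation~\eqref{ODE.PME}-\eqref{init.cond.ODE.PME} with $a=1$, whose solution is $\phi_1 = \Phi_1^{1/m}$. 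Standard continuous dependence of ODE solutions on parameters yields $g(\cdot;a) \to \phi_1$ in $C^1$ on any compact subinterval of $[0,\xi_{\infty}(1))$ on which $\phi_1$ is bounded away from zero.

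The crux is then to verify that $\phi_1$ itself enjoys the qualitative features defining $\mathcal{A}$ in the rescaled variable, namely $\xi_{\infty}(1) < \infty$ with $\Phi_1(\xi_{\infty}(1)) = 0$ and $\Phi_1'(\eta) < 0$ throughout $(0, \xi_{\infty}(1)]$. Strict monotonicity near the origin follows from the expansion $\Phi_1(\eta) = 1 - \alpha\eta^2/(2N) + o(\eta^2)$ given by Lemma~\ref{lem.expPME}. The specialization of the argument of Lemma~\ref{lem.min} to the absorption-free equation~\eqref{ODE.PME} rules out any intermediate local minimum of $\Phi_1$, since at such a critical point $\eta_1$ with $\Phi_1(\eta_1) > 0$ one would get $\Phi_1''(\eta_1) = -\alpha\phi_1(\eta_1) < 0$, contradicting the local-minimum requirement. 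Compact support of $\phi_1$ then reduces to showing that the monotone decaying orbit cannot approach a positive asymptote at infinity, which is handled via a phase-plane analysis of the autonomous reduction of~\eqref{ODE.PME}. This verification of the properties of $\phi_1$ is the main technical obstacle.

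With these properties of $\phi_1$ in hand, continuous dependence provides, for $a$ small enough, a first zero $\eta_0(a) < \infty$ of $g(\cdot;a)$ close to $\xi_{\infty}(1)$, together with $(g^m)'(\cdot;a) < 0$ on $(0,\eta_0(a)]$. Undoing the scaling~\eqref{exist.resc} yields $\xi_0(a) = a^{(m-1)/2}\eta_0(a) < \infty$ and $(f^m)'(\cdot;a) < 0$ on $(0,\xi_0(a)]$; hence $a \in \mathcal{A}$, and $(0,a_*) \subseteq \mathcal{A}$ for some $a_* > 0$.

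For openness of $\mathcal{A}$, given $a_0 \in \mathcal{A}$ the interface is transverse in the sense that $F(\xi_0(a_0);a_0) = 0$ with $F'(\xi_0(a_0);a_0) < 0$. Replacing $F^{q/m}$ by $F_+^{q/m}$ in~\eqref{ODE2} produces an equation with continuous right-hand side which agrees with~\eqref{ODE2} wherever $F \geq 0$ and which can be continued past $\xi_0(a_0)$. Classical continuous dependence on the parameter $a$ then yields $F(\cdot;a) \to F(\cdot;a_0)$ in $C^1([0, \xi_0(a_0)+\delta])$ as $a \to a_0$. The transverse crossing persists, producing a nearby first zero $\xi_0(a)$ with $F'(\xi_0(a);a) < 0$, and uniform continuity on the compact interval ensures $F'(\cdot;a) < 0$ on $(0, \xi_0(a)]$ for $a$ near $a_0$. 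Hence $\mathcal{A}$ is open.
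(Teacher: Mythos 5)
Your overall strategy is the paper's: the rescaling~\eqref{exist.resc} with $\gamma=-(m-1)/2$, the observation that the absorption coefficient $a^{(\sigma(m-1)+2(q-1))/2}$ vanishes as $a\to0$, the identification of the limit profile $h=\phi_1$ solving~\eqref{ODE.PME} with datum $1$, the exclusion of an interior critical point by evaluating the equation there, and a continuity-in-$a$ argument to transfer the transverse interface back to $g(\cdot;a)$ and hence to $f(\cdot;a)$. The openness argument via regularizing $F^{q/m}$ to $F_+^{q/m}$ and using transversality of the crossing is also consistent with the paper's (unelaborated) ``standard continuity arguments''.

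However, there is a genuine gap at precisely the point you flag as ``the main technical obstacle'': you never prove that $h$ vanishes at a finite $\eta_0$, deferring instead to an unspecified ``phase-plane analysis of the autonomous reduction'' of~\eqref{ODE.PME}. Two remarks. First, your reduction is too weak: since $h$ is positive and decreasing as long as it exists, one must exclude not only convergence to a positive asymptote but also the scenario $h>0$ on all of $(0,\infty)$ with $h(\eta)\to0$ as $\eta\to\infty$. Second, this cannot simply be quoted: equation~\eqref{ODE.PME} is not autonomous, and $h$ is not a Barenblatt profile because the exponents $\alpha,\beta$ in~\eqref{SSexp} do not satisfy the mass-conservation relation $\alpha=N\beta$, so the compact support of the limiting profile is not a known fact about the porous medium equation. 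The paper closes this gap with a quantitative argument adapted from \cite{Shi04}: from $\frac{d}{d\eta}\big[\eta^{N-1}(h^m)'(\eta)+\frac{\alpha}{N}\eta^N h(\eta)\big]=\big(\beta+\frac{\alpha}{N}\big)\eta^N h'(\eta)<0$ one gets, after integration, $(h^m)'(\eta)+\frac{\alpha}{N}\eta h(\eta)+\delta\eta^{1-N}\le 0$ for $\eta\ge1$ and some $\delta>0$; Young's inequality then yields $\big(h^{(N(m-1)+1)/N}\big)'(\eta)\le -c_1<0$, which forces $h$ to reach zero at a finite $\eta_0$, and the monotonicity of $\eta\mapsto\eta^{N-1}(h^m)'(\eta)$ gives $(h^m)'(\eta_0)<0$. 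Without this (or an equivalent) argument, the non-emptiness of $\mathcal{A}$ --- the substance of the lemma --- is not established.
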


\begin{proof}
We choose $\gamma=-(m-1)/2$ in \eqref{exist.resc} and we note that
$$
q-m-\gamma(\sigma+2)=\frac{\sigma(m-1)+2(q-1)}{2}>0
$$
in our range of exponents~\eqref{range.exp}, so that Eq.~\eqref{ODE.resc} reduces in the limit $a\to0$ to
\begin{equation}\label{ODE.setA}
\begin{split}
& (h^m)''(\eta)+\frac{N-1}{\eta}(h^m)'(\eta)+\alpha h(\eta)-\beta\eta h'(\eta)=0, \\
& \qquad h(0)=1, \quad h'(0) = 0.
\end{split}
\end{equation}
We claim that there exists $\eta_0\in(0,\infty)$ such that
$$
h(\eta_0)=0 \;\;\text{ and }\;\; (h^m)'(\eta_0)<0.
$$
To prove the claim, we argue as in the proof of \cite[Theorem~2]{Shi04}. Set
$$
\eta_0:=\inf\{\eta>0: h(\eta)=0\}>0, \qquad \eta_1:=\inf\{\eta\in(0,\eta_0): (h^m)'(\eta)=0\},
$$
the latter being well defined and positive due to $(h^m)''(0)=-\alpha/N<0$.

Assume first for contradiction that $\eta_1<\eta_0$. Then $(h^m)''(\eta_1)\geq0$, while \eqref{ODE.setA} evaluated at $\eta=\eta_1$ gives $(h^m)''(\eta_1)=-\alpha h(\eta_1)<0$ and a contradiction. Therefore, $\eta_1=\eta_0$.

Assume now for contradiction that $\eta_0=\infty$ and thus $(h^m)'(\eta)<0$ and $h'(\eta)<0$ for any $\eta>0$. It follows from Eq.~\eqref{ODE.setA} that
\begin{equation*}
\begin{split}
\frac{d}{d\eta}\left[\eta^{N-1}(h^m)'(\eta)+\frac{\alpha}{N}\eta^Nh(\eta)\right]&=\eta^{N-1}\left[\beta\eta h'(\eta)-\alpha h(\eta)\right]+\alpha\eta^{N-1}h(\eta)+\frac{\alpha}{N}\eta^Nh'(\eta)\\
&=\left(\beta+\frac{\alpha}{N}\right)\eta^Nh'(\eta)<0,
\end{split}
\end{equation*}
which after integration gives
\begin{equation*}
\begin{split}
\eta^{N-1}(h^m)'(\eta)+\frac{\alpha}{N}\eta^Nh(\eta)&\leq\left(\beta+\frac{\alpha}{N}\right)\int_0^{\eta}\tau^Nh'(\tau)\,d\tau\\
&\leq-\delta:=\left(\beta+\frac{\alpha}{N}\right)\int_0^{1}\tau^Nh'(\tau)\,d\tau,
\end{split}
\end{equation*}
provided $\eta>1$, since $h'(\eta)<0$ for any $\eta>0$. We further infer that
\begin{equation}\label{interm10}
(h^m)'(\eta)+\frac{\alpha}{N}\eta h(\eta)+\delta\eta^{1-N}\leq0, \qquad \eta\geq1.
\end{equation}
We next apply Young's inequality
$$
\left(\frac{\alpha}{N}\eta h(\eta)\right)^{(N-1)/N}(\delta\eta^{1-N})^{1/N}\leq\frac{\alpha}{N}\eta h(\eta)+\delta\eta^{1-N},
$$
to deduce from~\eqref{interm10} that
$$
\left(\frac{\alpha}{N}\right)^{(N-1)/N}\delta^{1/N}h^{(N-1)/N}(\eta)\leq-(h^m)'(\eta)=-mh^{m-1}(\eta)h'(\eta),
$$
or equivalently
$$
h^{(N(m-2)+1)/N}(\eta)h'(\eta)+\left(\frac{\alpha}{N}\right)^{(N-1)/N}\frac{\delta^{1/N}}{m}\leq0,
$$
for any $\eta\geq1$. Consequently, since $m+1/N>1$, there exists $c_1>0$ such that
$$
\left(h^{(N(m-1)+1)/N}(\eta)\right)'\leq-c_1, \qquad \eta\geq1,
$$
which gives after integration that
$$
h^{(N(m-1)+1)/N}(\eta)\leq h^{(N(m-1)+1)/N}(1)-c_1(\eta-1),
$$
leading to a contradiction with the positivity of $h$ for $\eta>1$ large enough. Therefore $\eta_0<\infty$. We are left only to show that $(h^m)'(\eta_0)<0$. To this end, we notice that Eq.~\eqref{ODE.setA} implies
$$
\frac{d}{d\eta}\left[\eta^{N-1}(h^m)'(\eta)\right]=-\eta^{N-1}\left[\alpha h(\eta)-\beta\eta h'(\eta)\right]<0,
$$
for any $\eta\in(0,\eta_0)$. Therefore,
$$
\eta^{N-1}(h^m)'(\eta)<\left(\frac{\eta_0}{2}\right)^{N-1}(h^m)'\left(\frac{\eta_0}{2}\right)<0, \qquad \eta\in\left(\frac{\eta_0}{2},\eta_0\right),
$$
and passing to the limit as $\eta\to\eta_0$ from the left, we infer that
$$
\eta_0^{N-1}(h^m)'(\eta_0)<0,
$$
as claimed. Standard continuity arguments in \eqref{ODE.resc} with respect to the parameter $a>0$ end up the proof.
\end{proof}

We now characterize in a similar manner the set $\mathcal{C}$.

\begin{lemma}\label{lem.C}
The set $\mathcal{C}$ is open. Moreover, there exists $a^{*}>0$ such that $(a^{*},\infty)\subseteq\mathcal{C}$.
\end{lemma}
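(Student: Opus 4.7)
I would observe that if $a_0\in\mathcal{C}$, then by Lemma~\ref{lem.min}, $F(\cdot;a_0)$ attains a \emph{strict} local minimum at $\xi_1(a_0)\in(0,\xi_0(a_0))$ with $F(\xi_1(a_0);a_0)>0$. Pick $\varepsilon>0$ small enough that $F'(\xi_1(a_0)-\varepsilon;a_0)<0<F'(\xi_1(a_0)+\varepsilon;a_0)$ and $F(\cdot;a_0)>0$ on $[0,\xi_1(a_0)+\varepsilon]$. Since \eqref{ODE2}--\eqref{init.cond.ODE2} is a regular (semilinear) Cauchy problem away from $\{F=0\}$, continuous dependence on $a$ yields $F(\cdot;a)\to F(\cdot;a_0)$ in $C^1([0,\xi_1(a_0)+\varepsilon])$ as $a\to a_0$. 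The strict sign conditions above are preserved for $a$ close to $a_0$, so by the intermediate value theorem $F'(\cdot;a)$ has a zero $\xi_1(a)\in(\xi_1(a_0)-\varepsilon,\xi_1(a_0)+\varepsilon)$ at which $F(\cdot;a)>0$; in particular $\xi_1(a)<\xi_0(a)$, i.e.\ $a\in\mathcal{C}$.

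\textbf{Rescaling for large $a$.} For the second claim, the plan is to use the rescaling~\eqref{exist.resc} with
$$
\gamma := \frac{q-m}{\sigma+2}<0,
$$
chosen so that $q-m-\gamma(\sigma+2)=0$ whereas $\theta:=1-m-2\gamma=[\sigma(1-m)+2(1-q)]/(\sigma+2)<0$ under~\eqref{range.exp}. Equation~\eqref{ODE.resc} then reads
\begin{equation*}
(g^m)''(\eta)+\frac{N-1}{\eta}(g^m)'(\eta)+a^{\theta}\big[\alpha g(\eta)-\beta\eta g'(\eta)\big]-\eta^\sigma g^q(\eta)=0,\quad g(0)=1,\ g'(0)=0,
\end{equation*}
and $a^\theta\to 0$ as $a\to\infty$. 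The formal limit is the reduced Cauchy problem
\begin{equation*}
(g_\infty^m)''(\eta)+\frac{N-1}{\eta}(g_\infty^m)'(\eta)=\eta^\sigma g_\infty^q(\eta),\quad g_\infty(0)=1,\ g_\infty'(0)=0.
\end{equation*}
Rewriting it as $\bigl(\eta^{N-1}(g_\infty^m)'\bigr)'=\eta^{N-1+\sigma}g_\infty^q$ and integrating from $0$ yields $(g_\infty^m)'(\eta)>0$ for every $\eta>0$, so $g_\infty$ is strictly increasing on $[0,\infty)$ with $g_\infty(1)>1$. A super-solution argument with multiples of $\eta^{(\sigma+2)/(m-q)}$ (the power of the stationary profile of~\eqref{SSODE}) further shows that $g_\infty$ is defined globally.

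\textbf{Conclusion.} By continuous dependence of~\eqref{ODE.resc} on the parameter $a^\theta$, $g(\cdot;a)\to g_\infty$ uniformly on $[0,1]$ as $a\to\infty$. Hence for $a$ large enough, $g(\eta;a)\geq 1/2$ on $[0,1]$ and $g(1;a)>1$, so, writing $\bar\xi(a):=a^{-\gamma}$, we obtain $f(\xi;a)\geq a/2>0$ on $[0,\bar\xi(a)]$ (whence $\bar\xi(a)<\xi_0(a)$) together with $f(\bar\xi(a);a)=a\,g(1;a)>a=f(0;a)$. Since $(f^m)''(0;a)=-\alpha a/N<0$, $f(\cdot;a)$ is strictly decreasing on a right neighborhood of $0$, so $f(\xi;a)<a$ for small $\xi>0$; combined with $f(\bar\xi(a);a)>a$, continuity forces a local minimum of $f(\cdot;a)$ at some $\xi_1(a)\in(0,\bar\xi(a))\subseteq(0,\xi_0(a))$, i.e.\ $a\in\mathcal{C}$. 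The main obstacle is that the limit profile $g_\infty$ is itself monotone increasing and has \emph{no} local minimum; the minimum of $f(\cdot;a)$ is created by the mismatch between the initial decrease at $\xi=0$ (which disappears in the rescaled limit because $(g^m)''(0;a)=-a^\theta\alpha/N\to 0$) and the upward behavior of $g_\infty$ on the rescaled scale $\eta=O(1)$, both visible simultaneously only for large but finite $a$.
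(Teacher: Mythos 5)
Your proposal is correct and follows essentially the same route as the paper: the same rescaling with $\gamma=(q-m)/(\sigma+2)$, the same limit problem \eqref{ODE.setC} whose solution is shown to be increasing and globally defined, and the same perturbation argument on $[0,1]$ to produce an interior positive local minimum (you detect it via $g(1;a)>1$ where the paper uses $g'(1;a)\ge l'(1)/2>0$, an immaterial difference). Your openness argument is exactly the ``standard continuity argument'' the paper only sketches, made explicit via the strict minimum provided by Lemma~\ref{lem.min}.
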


\begin{proof}
We now choose $\gamma=(q-m)/(\sigma+2)$ in \eqref{exist.resc} and we observe that
$$
1-m-2\gamma=-\frac{\sigma(m-1)+2(q-1)}{\sigma+2}<0
$$
in our range of exponents~\eqref{range.exp}. Thus, in the limit $a\to\infty$, Eq.~\eqref{ODE.resc} reduces to
\begin{equation}\label{ODE.setC}
\begin{split}
& (l^m)''(\eta)+\frac{N-1}{\eta}(l^m)'(\eta)-\eta^{\sigma}l^q(\eta)=0, \\
& \qquad l(0)=1, \quad l'(0)= 0
\end{split}
\end{equation}
which has a unique positive solution $l^m\in C^2([0,\eta_\infty))$ defined on a maximal existence interval $[0,\eta_\infty)$ with the alternative: either $\eta_\infty=\infty$ or
$$
\eta_\infty < \infty \;\;\text{ and }\;\; \lim_{\eta\to \eta_\infty} \left( l(\eta) + \frac{1}{l(\eta)} \right) = \infty.
$$

Assume for contradiction that
$$
\eta_0 := \inf\big\{ \eta\in (0,\eta_\infty)\ :\ l(\eta)=0 \big\} < \eta_\infty,
$$
so that $l'(\eta_0)\le 0$. We readily notice from~\eqref{ODE.setC} that
\begin{equation}\label{z2}
\frac{d}{d\eta}\left(\eta^{N-1}(l^m)'(\eta)\right)=\eta^{N+\sigma-1}l^q(\eta)>0, \qquad \eta\in (0,\eta_0),
\end{equation}
and thus
$$
\eta^{N-1}(l^m)'(\eta)>0, \qquad \eta\in(0,\eta_0),
$$
and a contradiction. Consequently, $\eta_0=\eta_\infty$ and we infer from~\eqref{z2} after integration that
$$
0 < \eta^{N-1} (l^m)'(\eta) = \int_0^\eta \eta_*^{N+\sigma-1} l^q(\eta_*)\,d\eta_*, \qquad \eta\in (0,\eta_\infty).
$$
In particular, the function $l$ is increasing on $(0,\eta_\infty)$ and this property implies that
$$
\eta^{N-1} (l^m)'(\eta) \le l^q(\eta) \int_0^\eta \eta_*^{N+\sigma-1}\,d\eta_* = \frac{\eta^{N+\sigma}}{N+\sigma} l^q(\eta), \qquad \eta\in (0,\eta_\infty).
$$
Owing to the positivity of both $l$ and $\eta$ on $[0,\eta_\infty)$, we further obtain
$$
(l^{m-q})'(\eta) \le \frac{m-q}{m} \frac{\eta^{\sigma+1}}{N+\sigma}, \qquad \eta\in (0,\eta_\infty).
$$
Integrating once more and recalling that $l$ is increasing on $(0,\eta_\infty)$, we conclude that
$$
1 \le l^{m-q}(\eta) \le 1 + \frac{m-q}{m} \frac{\eta^{\sigma+2}}{(\sigma+2)(N+\sigma)}, \qquad \eta\in (0,\eta_\infty),
$$
a property which discards the possible finiteness of $\eta_\infty$ according to the alternative above. Consequently, the function $l$ is increasing on $(0,\infty)$ and it has a strict minimum at $\eta=0$. Coming back to the solution $g(\cdot;a)$ to Eq.~\eqref{ODE.resc}, since $l'(1)>0$, there exists $a^{*}>0$ such that
$$
\max_{[0,1]}|g(\cdot;a) - l| \le \frac{1}{2} \;\;\text{ and }\;\; |g'(1;a)-l'(1)|\leq\frac{l'(1)}{2}, \qquad {\rm for \ any} \ a\in(a^{*},\infty);
$$
that is, recalling that $l\ge 1$ on $[0,1]$, $g(\eta;a)\ge 1/2$ for $\eta\in [0,1]$ and $g'(1;a)\geq l'(1)/2>0$. Since we already know that $(g^m;a)'(\eta)$ is negative in a right neighborhood of $\eta=0$, there is at least one positive local strict minimum to $g(\cdot;a)$ in $[0,1]$ and thus $a\in\mathcal{C}$ for any $a\in(a^{*},\infty)$. A continuity argument gives that $\mathcal{C}$ is open.
\end{proof}

We infer from Lemmas~\ref{lem.A} and~\ref{lem.C} that the complementary set $\mathcal{B}$ is non-empty and closed. We then characterize the elements of the set $\mathcal{B}$ as follows.

\begin{lemma}\label{lem.B}
Let $a\in\mathcal{B}$. Then $\xi_0(a)<\infty$ and $(f^m)'(\xi_0(a))=0$.
\end{lemma}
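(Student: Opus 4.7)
The plan is to decode the membership $a\in\mathcal{B}$ into the monotonicity property $(f^m)'(\cdot;a)<0$ on $(0,\xi_0(a))$, then to rule out $\xi_0(a)=\infty$ by invoking the self-similar ansatz~\eqref{SSS} to upgrade $f(\cdot;a)$ into a bounded weak solution of Eq.~\eqref{eq1} and confronting it with the instantaneous shrinking statement in Theorem~\ref{th.wp}, and finally to extract the boundary relation $(f^m)'(\xi_0(a))=0$ by combining continuity with the definition of $\mathcal{A}$.

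The first step is immediate: since $a\notin\mathcal{C}$ and $\xi_1(a)\leq\xi_0(a)$ by construction, we must have $\xi_1(a)=\xi_0(a)$, which by the very definition of $\xi_1(a)$ guarantees $(f^m)'(\xi;a)<0$ for every $\xi\in(0,\xi_0(a))$ and, a fortiori, that $f(\cdot;a)$ is strictly decreasing on this interval.

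The core of the argument is showing $\xi_0(a)<\infty$, for which I would argue by contradiction. Assume $\xi_0(a)=\infty$. Then $f(\cdot;a)>0$ and strictly decreasing on $[0,\infty)$, and since $0<f(\cdot;a)\leq a$, the blow-up alternative in the definition of $\xi_{\rm max}(a)$ is excluded, so $\xi_{\rm max}(a)=\infty$ and $f(\cdot;a)\in C^2([0,\infty))$. The self-similarity exponents~\eqref{SSexp} were chosen precisely so that
$$
U(t,x):=t^{-\alpha}f(|x|t^{\beta};a),\qquad (t,x)\in(0,\infty)\times\real^N,
$$
is a bounded classical solution of Eq.~\eqref{eq1} on $(0,\infty)\times\real^N$; the smoothness across $x=0$ comes from $f'(0;a)=0$ together with $f(\cdot;a)\in C^2$, and the bound $\|U(t,\cdot)\|_\infty\leq t^{-\alpha}a$ is immediate. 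For any fixed $t_0>0$, the function $U(t_0,\cdot)$ belongs to $L^{\infty}_+(\real^N)$, and $U$ is a weak solution on $(t_0,\infty)\times\real^N$ in the sense of Definition~\ref{def.wp}. Theorem~\ref{th.wp}, applied to the Cauchy problem with initial time $t_0$ and initial datum $U(t_0,\cdot)$, then delivers both uniqueness of this weak solution and instantaneous shrinking of its support, forcing $U(t,\cdot)$ to be compactly supported for any $t>t_0$. This is a flat contradiction with $U(t,x)=t^{-\alpha}f(|x|t^{\beta};a)>0$ for every $x\in\real^N$, since $f(\cdot;a)>0$ everywhere by assumption. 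Hence $\xi_0(a)<\infty$.

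Once $\xi_0(a)<\infty$ is established, the identity $(f^m)'(\xi_0(a);a)=0$ is cheap: continuity of $(f^m)'$ together with $(f^m)'(\xi;a)<0$ on $(0,\xi_0(a))$ gives $(f^m)'(\xi_0(a);a)\leq 0$, and a strict inequality would place $a$ in $\mathcal{A}$, contrary to $a\in\mathcal{B}$. The main technical obstacle I foresee is checking that $U$ qualifies as a weak solution in the sense of Definition~\ref{def.wp} on strips $(t_0,T)\times\real^N$, in particular that $U^m\in L^2((t_0,T),H^1_{\rm loc}(\real^N))$: this reduces to a change of variables $\eta=|x|t^{\beta}$ and to bounding $(f^m)'$ on bounded $\eta$-intervals via the smoothness of $f(\cdot;a)$ on $[0,\infty)$, but it is the step that unlocks Theorem~\ref{th.wp} and produces the contradiction with the hypothetical non-compactness of the support of $U(t,\cdot)$.
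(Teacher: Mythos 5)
Your proposal is correct and follows essentially the same route as the paper: decode $a\notin\mathcal{C}$ into monotonicity, assume $\xi_0(a)=\infty$, turn the global positive decreasing profile into a bounded self-similar weak solution of Eq.~\eqref{eq1} with full support, and contradict the instantaneous shrinking of Theorem~\ref{th.wp}, then get $(f^m)'(\xi_0(a))=0$ from $a\notin\mathcal{A}$. The only cosmetic difference is that the paper uses the time-shifted solution $(1+t)^{-\alpha}f(|x|(1+t)^{\beta};a)$ to have a bounded datum at $t=0$, whereas you restart the Cauchy problem at a positive time $t_0$; both are equivalent.
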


\begin{proof}
Assume for contradiction that $\xi_0(a)=\infty$. Then, since $a\not\in\mathcal{C}$, it follows from Lemma~\ref{lem.min} that $\xi_1(a)=\infty$ and that $f(\cdot;a)$ is decreasing on $(0,\infty)$ and positive, thus $0<f(\xi;a)<a$ for any $\xi>0$. Consequently, recalling~\eqref{SSS},
$$
(t,x)\mapsto (1+t)^{-\alpha} f(|x|(1+t)^\beta;a)
$$
is a weak solution to \eqref{eq1} with a non-negative and bounded initial condition and its support at any time is the whole space $\real^N$. But this contradicts the instantaneous shrinking of the supports of solutions proved in Theorem~\ref{th.wp}. Thus, $\xi_0(a)<\infty$ and, since $f^m$ is decreasing on $(0,\xi_0(a))$ and $a\not\in\mathcal{A}$, we conclude that $(f^m)'(\xi_0(a))=0$.
\end{proof}

Summarizing the outcome of the analysis performed so far, elements in the set $\mathcal{B}$ correspond to self-similar profiles having an interface at $\xi=\xi_0(a)<\infty$ giving rise to a self-similar solution to Eq.~\eqref{eq1}, thus completing the existence part of Theorem~\ref{th.uniqSS}.

\subsection{Interface behavior when $a\in\mathcal{B}$}\label{subsec.interface}

Once proved that there are elements in the set $\mathcal{B}$ and that they are compactly supported with a smooth contact at the edge of the support $\xi_0(a)\in(0,\infty)$, we establish now the local behavior of profiles $f(\cdot;a)$ for $a\in\mathcal{B}$ as $\xi\to\xi_0(a)$. We start with a general upper bound. As before, we drop the explicit dependence on $a$ from the notation.

\begin{lemma}\label{lem.i1}
Assume that $a\in\mathcal{B}$ and $f=f(\cdot;a)$. Then
\begin{equation}\label{i1}
|(f^{m-q})'(\xi)|\leq 2^{N-1}\xi_0^{\sigma}(\xi_0-\xi), \qquad \xi\in\left(\frac{\xi_0}{2},\xi_0\right).
\end{equation}
Moreover, there exist $C_1>0$ and $C_2>0$ depending only on $m$, $N$, $q$, $\sigma$ and $\xi_0$ such that the following upper bounds for $f$ hold true:
\begin{equation}\label{i2}
f(\xi)\leq C_1(\xi_0-\xi)^{2/(m-q)}, \qquad \xi\in\left(\frac{\xi_0}{2},\xi_0\right),
\end{equation}
and
\begin{equation}\label{i3}
f(\xi)\leq C_2(\xi_0-\xi)^{1/(1-q)}, \qquad \xi\in\left(\frac{\xi_0}{2},\xi_0\right).
\end{equation}
\end{lemma}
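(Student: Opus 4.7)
The overall strategy is to integrate the ODE for $f=f(\cdot;a)$ from $\xi\in(\xi_0/2,\xi_0)$ up to the interface $\xi_0=\xi_0(a)$, using two facts already in hand: first, $a\in\mathcal{B}$ entails $a\notin\mathcal{C}$, so $f$ is positive and strictly decreasing on $(0,\xi_0)$; second, by Lemma~\ref{lem.B} and the definition of $\xi_0$, one has $(f^m)'(\xi_0)=0$ and $f(\xi_0)=0$. Multiplying Eq.~\eqref{SSODE} by $\xi^{N-1}$ yields the divergence identity
\begin{equation*}
\frac{d}{d\xi}\big[\xi^{N-1}(f^m)'(\xi)\big]=\xi^{N+\sigma-1}f^q(\xi)-\alpha\xi^{N-1}f(\xi)+\beta\xi^N f'(\xi),
\end{equation*}
which is the only analytical input needed; the three bounds will correspond to three different ways of exploiting it.

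For \eqref{i1}, since $f>0$ and $f'<0$ on $(0,\xi_0)$, the last two terms above are non-positive and can simply be dropped. Integrating from $\xi$ to $\xi_0$, using $(f^m)'(\xi_0)=0$ together with the monotonicity bound $f^q(s)\leq f^q(\xi)$ for $s\in[\xi,\xi_0]$ and the crude estimate $s^{N+\sigma-1}\leq\xi_0^{N+\sigma-1}$, I obtain
\begin{equation*}
\xi^{N-1}|(f^m)'(\xi)|\leq\xi_0^{N+\sigma-1}(\xi_0-\xi)f^q(\xi).
\end{equation*}
Multiplying by $\xi^{1-N}\leq 2^{N-1}\xi_0^{1-N}$ for $\xi\geq\xi_0/2$, and invoking the algebraic identity $(f^{m-q})'=\frac{m-q}{m}f^{-q}(f^m)'$ together with $(m-q)/m\leq 1$, gives \eqref{i1}. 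Then \eqref{i2} follows directly by integrating \eqref{i1} from $\xi$ to $\xi_0$, using $f^{m-q}(\xi_0)=0$, and taking the $(m-q)$-th root.

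For \eqref{i3} a more refined bookkeeping is needed: instead of discarding the convection term $\beta\xi^N f'$, I absorb it into the primitive by working with the auxiliary function $H(\xi):=\xi^{N-1}(f^m)'(\xi)-\beta\xi^N f(\xi)$ already introduced in~\eqref{interm11}, which satisfies~\eqref{interm9}, namely $H'(\xi)=-(\alpha+N\beta)\xi^{N-1}f(\xi)+\xi^{N+\sigma-1}f^q(\xi)$. Since $(f^m)'(\xi_0)=f(\xi_0)=0$, one has $H(\xi_0)=0$, so integrating from $\xi$ to $\xi_0$ gives
\begin{equation*}
\xi^{N-1}|(f^m)'(\xi)|+\beta\xi^N f(\xi)=\int_\xi^{\xi_0}s^{N+\sigma-1}f^q(s)\,ds-(\alpha+N\beta)\int_\xi^{\xi_0}s^{N-1}f(s)\,ds.
\end{equation*}
Both terms on the left-hand side are non-negative, so keeping only $\beta\xi^N f(\xi)$ and bounding the positive integral on the right exactly as in the proof of \eqref{i1} leads to $\beta\xi^N f(\xi)\leq\xi_0^{N+\sigma-1}(\xi_0-\xi)f^q(\xi)$, whence \eqref{i3} upon dividing by $\beta\xi^N f^q(\xi)$, using $\xi\geq\xi_0/2$, and taking the $(1-q)$-th root.

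I do not expect any real difficulty here: the only care needed lies in the bookkeeping of which non-positive terms to drop in each case. The reason why two different powers, $(\xi_0-\xi)^{2/(m-q)}$ and $(\xi_0-\xi)^{1/(1-q)}$, appear is that they reflect the two natural balances near the interface---diffusion versus absorption, and convection versus absorption---and both bounds will prove useful later, since the sharp asymptotics of $f$ near $\xi_0$ depend on the sign of $m+q-2$ as announced after the statement of Theorem~\ref{th.uniqSS}.
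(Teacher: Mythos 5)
Your proposal is correct and follows essentially the same route as the paper: the paper derives all three bounds from the single inequality $m\xi^{N-1}f^{m-1-q}|f'|+\beta\xi^N f^{1-q}\le \xi_0^{N+\sigma-1}(\xi_0-\xi)$ obtained by integrating $H'\le \xi^{N+\sigma-1}f^q$ with $H$ as in~\eqref{interm11}, whereas you obtain \eqref{i1}--\eqref{i2} by dropping the convection term directly from the divergence identity and only invoke $H$ for \eqref{i3}, which is a cosmetic difference in bookkeeping. All the essential ingredients (the divergence form, $f(\xi_0)=(f^m)'(\xi_0)=0$ from Lemma~\ref{lem.B}, monotonicity of $f$, and $\xi\ge\xi_0/2$) are used identically.
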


Notice that \eqref{i2} is a better upper bound than \eqref{i3} if and only if $m+q<2$.

\begin{proof}
Recalling the definition~\eqref{interm11} of $H(\xi)$ and \eqref{interm9}, we have for $\xi\in(0,\xi_0)$
$$
H'(\xi)=\xi^{N+\sigma-1}f^q(\xi) - (\alpha+N\beta) \xi^{N-1} f(\xi) \leq \xi^{N+\sigma-1} f^q(\xi).
$$
Integrating this inequality over $(\xi,\xi_0)$ and taking into account that $f(\xi_0)=0$ and $F'(\xi_0)=0$, we obtain
$$
-\xi^{N-1}F'(\xi)+\beta\xi^N f(\xi)\leq\int_{\xi}^{\xi_0}\eta^{N+\sigma-1}f^q(\eta)\,d\eta.
$$
Using the fact that $f$ is decreasing since $a\in\mathcal{B}$ (see Lemma~\ref{lem.B}) and the definition of $F$ we further find
\begin{equation*}
\begin{split}
m\xi^{N-1}f^{m-1}(\xi)|f'(\xi)|+\beta\xi^Nf(\xi)&\leq f^q(\xi)\int_{\xi}^{\xi_0}\eta^{N+\sigma-1}\,d\eta\\
&\leq f^q(\xi)\xi_0^{N+\sigma-1}(\xi_0-\xi),
\end{split}
\end{equation*}
thus, taking into account the positivity of $f(\xi)$ for $\xi\in(0,\xi_0)$, we get
\begin{equation}\label{interm12}
m\xi^{N-1}f^{m-1-q}(\xi)|f'(\xi)|+\beta\xi^Nf^{1-q}(\xi)\leq\xi_0^{N+\sigma-1}(\xi_0-\xi),
\end{equation}
for any $\xi\in(0,\xi_0)$. Consider now $\xi\in(\xi_0/2,\xi_0)$. On the one hand, using the first term in the left-hand side of \eqref{interm12}, we infer that
\begin{equation}\label{interm13}
\frac{m}{m-q}\left(\frac{\xi_0}{2}\right)^{N-1}|(f^{m-q})'(\xi)|\leq\xi_0^{N+\sigma-1}(\xi_0-\xi),
\end{equation}
and \eqref{i1} follows readily from~\eqref{interm13}. Recalling that $f'<0$ on $(0,\xi_0)$ and integrating~\eqref{interm13} over $(\xi,\xi_0)$ give
$$
f^{m-q}(\xi)\leq\frac{2^{N-2}(m-q)}{m}\xi_0^{\sigma}(\xi_0-\xi)^2,
$$
and we have shown~\eqref{i2}. On the other hand, using the second term in the left-hand side of \eqref{interm12}, we obtain, for $\xi\in (\xi_0/2,\xi_0)$,
$$
\beta\xi_0^{N}f^{1-q}(\xi)\leq 2^N\xi_0^{N+\sigma-1}(\xi_0-\xi),
$$
from which the last upper bound~\eqref{i3} follows.
\end{proof}

The next two technical results establish properties of the derivatives of some powers of $f(\cdot;a)$ with $a\in\mathcal{B}$ near the interface point and will be very useful in establishing the precise local behavior as $\xi\to\xi_0$.

\begin{lemma}\label{lem.i2}
Let $a\in\mathcal{B}$. Then
$$
\limsup\limits_{\xi\to\xi_0}\left(f^{(m-q)/2}\right)'(\xi;a)>-\infty.
$$
\end{lemma}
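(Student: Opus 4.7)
The plan is to argue by contradiction, deriving a lower bound on $f^{(m-q)/2}$ from the hypothesis $\limsup = -\infty$, and then clashing it against the upper bound~\eqref{i2} already obtained in Lemma~\ref{lem.i1}.

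First I would record the preliminaries needed to even write $(f^{(m-q)/2})'$: since $a\in\mathcal{B}$, Lemma~\ref{lem.B} ensures $\xi_0 = \xi_0(a)\in(0,\infty)$ and that $f(\cdot;a)$ is positive and (strictly) decreasing on $(0,\xi_0)$ with $f(\xi_0)=0$. In particular, $(f^{(m-q)/2})'=\tfrac{m-q}{2}\,f^{(m-q)/2-1}f'$ is well defined and negative on $(0,\xi_0)$.

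Next I would set up the contradiction. Assume $\limsup_{\xi\to\xi_0}(f^{(m-q)/2})'(\xi)=-\infty$. By definition of the $\limsup$, for every $M>0$ there exists $\delta_M\in(0,\xi_0/2)$ such that
$$ (f^{(m-q)/2})'(\xi) \le -M \qquad \text{for all } \xi\in(\xi_0-\delta_M,\xi_0). $$
Integrating this inequality over $(\xi,\xi_0)$ and using $f(\xi_0)=0$ yields the \emph{lower} bound
$$ f^{(m-q)/2}(\xi) \ge M(\xi_0-\xi) \qquad \text{for all } \xi\in(\xi_0-\delta_M,\xi_0). $$

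Finally, I would invoke the upper bound~\eqref{i2} from Lemma~\ref{lem.i1}, which upon raising to the power $(m-q)/2$ reads
$$ f^{(m-q)/2}(\xi) \le C_1^{(m-q)/2}\,(\xi_0-\xi), \qquad \xi\in\bigl(\tfrac{\xi_0}{2},\xi_0\bigr). $$
Comparing the two bounds on the common interval $(\xi_0-\delta_M,\xi_0)$ and dividing by $(\xi_0-\xi)>0$ gives $M\le C_1^{(m-q)/2}$. Choosing $M>C_1^{(m-q)/2}$ produces the desired contradiction, proving that $\limsup_{\xi\to\xi_0}(f^{(m-q)/2})'(\xi)>-\infty$.

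There is really no hard step here: the only conceptual point is realizing that $\limsup=-\infty$ is, by definition, an essentially \emph{uniform} pointwise upper bound of the derivative on a left neighborhood of $\xi_0$, which via integration (using $f(\xi_0)=0$) transforms immediately into a \emph{lower} bound on $f^{(m-q)/2}$ of linear order in $\xi_0-\xi$. The upper estimate~\eqref{i2} of Lemma~\ref{lem.i1} provides exactly the matching linear upper bound needed to obstruct this; this is precisely why the power $(m-q)/2$ (rather than an unrelated one) is the right exponent for the statement.
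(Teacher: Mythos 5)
Your proof is correct and follows essentially the same route as the paper: contradiction, integration of the assumed derivative bound over $(\xi,\xi_0)$ using $f(\xi_0)=0$ to get a linear lower bound on $f^{(m-q)/2}$, and a clash with the upper bound~\eqref{i2} of Lemma~\ref{lem.i1}. The only cosmetic difference is that the paper fixes $R>C_1^{(m-q)/2}$ at the outset rather than quantifying over all $M$ and choosing at the end.
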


\begin{proof}
We argue by contradiction and assume that the limit in the statement of Lemma~\ref{lem.i2} is equal to $-\infty$. Let $R>C_1^{(m-q)/2}$, where $C_1>0$ is the constant in the upper bound \eqref{i2}. It follows from the definition of the limit that there exists $\xi_R\in(\xi_0/2,\xi_0)$ such that
$$
\left(f^{(m-q)/2}\right)'(\xi)\leq-R, \qquad \xi\in(\xi_R,\xi_0).
$$
Taking $\xi\in(\xi_R,\xi_0)$ and integrating over $(\xi,\xi_0)$ easily leads to
$$
f(\xi)\geq R^{2/(m-q)}(\xi_0-\xi)^{2/(m-q)}>C_1(\xi_0-\xi)^{2/(m-q)},
$$
which contradicts \eqref{i2}.
\end{proof}

\begin{lemma}\label{lem.i3}
Let $a\in\mathcal{B}$ and assume that $m+q>2$. Then
$$
\limsup\limits_{\xi\to\xi_0}\left(f^{m-1}\right)'(\xi;a)=0.
$$
\end{lemma}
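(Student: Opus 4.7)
My plan is to argue by contradiction, exploiting the upper bound \eqref{i2} from Lemma~\ref{lem.i1} and the elementary observation that, when $m+q>2$, the exponent $2(m-1)/(m-q)$ in that bound is strictly larger than $1$. Since $a\in\mathcal{B}$, Lemma~\ref{lem.B} tells us that $f=f(\cdot;a)$ is decreasing on $(0,\xi_0)$, so $(f^{m-1})'(\xi;a)=(m-1)f^{m-2}(\xi;a)f'(\xi;a)\le 0$ for $\xi\in (0,\xi_0)$. Consequently the $\limsup$ in the statement is automatically non-positive, and it suffices to show that it cannot be strictly negative.

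Suppose then for contradiction that
$$
\limsup_{\xi\to\xi_0}(f^{m-1})'(\xi;a) = -2\delta < 0
$$
for some $\delta>0$. Then there is $\xi_\delta\in (\xi_0/2,\xi_0)$ such that $(f^{m-1})'(\xi;a)\le -\delta$ for all $\xi\in (\xi_\delta,\xi_0)$. Recalling that $f(\xi_0;a)=0$, I integrate this inequality over $(\xi,\xi_0)$ to obtain
$$
f^{m-1}(\xi;a) = -\int_\xi^{\xi_0}(f^{m-1})'(s;a)\,ds \ge \delta(\xi_0-\xi), \qquad \xi\in (\xi_\delta,\xi_0).
$$
Raising the upper bound~\eqref{i2} to the power $m-1$ yields on the other hand
$$
f^{m-1}(\xi;a) \le C_1^{m-1}(\xi_0-\xi)^{2(m-1)/(m-q)}, \qquad \xi\in (\xi_0/2,\xi_0).
$$
Combining both inequalities gives $\delta(\xi_0-\xi) \le C_1^{m-1}(\xi_0-\xi)^{2(m-1)/(m-q)}$ for all $\xi$ sufficiently close to $\xi_0$. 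The assumption $m+q>2$ is precisely equivalent to $2(m-1)/(m-q)>1$, so letting $\xi\to\xi_0$ in the last inequality leads to the desired contradiction, completing the proof.

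There is no real obstacle here: the argument is a straightforward comparison of two power rates near the interface, using that the exponent $2/(m-q)$ governing the decay of $f$ itself is large enough when $m+q>2$. The only subtlety is to make sure that the monotonicity of $f$ near $\xi_0$ is indeed available, which is exactly what the definition of $\mathcal{B}$ together with Lemma~\ref{lem.B} provides.
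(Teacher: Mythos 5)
Your proof is correct and follows essentially the same route as the paper: assume the $\limsup$ is strictly negative, integrate to get the lower bound $f^{m-1}(\xi)\ge\delta(\xi_0-\xi)$, and contradict it with a power-type upper bound on $f$ near $\xi_0$ whose exponent exceeds $1$ precisely when $m+q>2$. The only difference is that you invoke \eqref{i2} (exponent $2(m-1)/(m-q)$) where the paper uses \eqref{i3} (exponent $(m-1)/(1-q)$); both exponents are $>1$ exactly under the hypothesis $m+q>2$, so either bound closes the argument, and your preliminary observation that the $\limsup$ is automatically non-positive (since $f$ is decreasing on $(0,\xi_0)$ for $a\in\mathcal{B}$) is a correct and welcome clarification.
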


\begin{proof}
We argue by contradiction and assume that there exist $\epsilon>0$ and $\xi_{\epsilon}\in(\xi_0/2,\xi_0)$ such that
$$
(f^{m-1})'(\xi)\leq-\epsilon, \qquad \xi\in(\xi_{\epsilon},\xi_0).
$$
Integrating this inequality over $(\xi,\xi_0)$ (for $\xi\in(\xi_{\epsilon},\xi_0)$) gives
\begin{equation}\label{interm14}
f^{m-1}(\xi)\geq\epsilon(\xi_0-\xi), \qquad \xi\in(\xi_{\epsilon},\xi_0).
\end{equation}
We thus deduce from \eqref{i3} and \eqref{interm14} that
$$
\epsilon\leq\frac{f^{m-1}(\xi)}{\xi_0-\xi}\leq C_2^{m-1}(\xi_0-\xi)^{(m+q-2)/(1-q)}, \qquad \xi\in(\xi_{\epsilon},\xi_0),
$$
which leads to a contradiction if we pass to the limit as $\xi\to\xi_0$, taking into account that $m+q-2>0$.
\end{proof}

We are now ready to complete the local analysis near the interface point by computing the precise behavior of $f(\xi;a)$ as $\xi\to\xi_0(a)$ when $a\in\mathcal{B}$. To this end, we will employ ``surgically'' a phase space technique in order to identify the profiles solving \eqref{SSODE} with orbits entering some specific critical points of a quadratic, autonomous dynamical system. We will make a strong use of Lemmas~\ref{lem.i2} and~\ref{lem.i3} to select the right critical point. The analysis will be split into two cases, according to the sign of $m+q-2$, and we start with the range $m+q\in (1,2]$.

\medskip

\begin{proposition}\label{prop.interf.low}
Let $a\in\mathcal{B}$ and assume that $m+q\leq2$. Then, introducing
$$
K_1 := \left( \frac{m-q}{\sqrt{2m(m+q)}} \right)^{2/(m-q)}
$$
and
$$
K_2(z) := \left( \sqrt{1 + \frac{\beta^2 z^2}{2m(m+q)}} - \frac{\beta z}{\sqrt{2m(m+q)}} \right)^{2/(m-q)},
$$
the behavior of $f(\xi;a)$ as $\xi\to \x_0(a)$ is given by (with $\xi_0=\xi_0(a)$)
$$
f(\xi;a)=K_1\xi_0^{\sigma/(m-q)}(\xi_0-\xi)^{2/(m-q)}+o\Big((\xi_0-\xi)^{2/(m-q)}\Big)
$$
when $m+q\in (1,2)$ and
\begin{align*}
	f(\xi;a) & = K_1\xi_0^{\sigma/(m-q)} K_2\left(\xi_0^{(2-\sigma)/2} \right) (\xi_0-\xi)^{2/(m-q)} \\
	& \qquad + o\Big((\xi_0-\xi)^{2/(m-q)}\Big)
\end{align*}
when $m+q=2$.
\end{proposition}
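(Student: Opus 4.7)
My approach is to reduce the local analysis at the interface to identifying the $\omega$-limit of a planar orbit obtained from $f(\cdot;a)$ via an auto-model rescaling, and then to solve an explicit algebraic relation for the leading coefficient.

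I would first localize at the interface by setting $\eta := \xi_0 - \xi$ and $g(\eta) := f(\xi_0-\eta;a)$, so that $g(0)=0$ and $g>0$, $g'>0$ on $(0,\xi_0)$ (the latter holding by $a\in\mathcal{B}$ and Lemma~\ref{lem.B}). Passing to logarithmic time $s := -\log\eta$, I would introduce the phase variables
$$U(s) := \frac{g(\eta)}{\eta^{2/(m-q)}}, \qquad V(s) := \frac{\eta\,g'(\eta)}{g(\eta)},$$
whose expected limits are the sought prefactor $U_\ast$ and $V_\ast = 2/(m-q)$. A direct computation based on the ODE for $g$ gives the system
\begin{align*}
\frac{dU}{ds} &= U\Big(\tfrac{2}{m-q}-V\Big), \\
\frac{dV}{ds} &= V^2 - V - W(U,V,\eta),
\end{align*}
where $W = \eta^2 g''/g$. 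Using the identity $(g^m)''=m(m-1)g^{m-2}(g')^2+m g^{m-1}g''$ together with the ODE, each contribution to $W$ can be written as a function of $(U,V)$ times a power of $\eta$. The delicate term is the convective one, which carries the factor $\eta^{(2-m-q)/(m-q)}$ whose exponent is non-negative \emph{precisely because} $m+q\le 2$. Hence
$$W(U,V,\eta)\;\xrightarrow[\eta\to 0]{}\;W_\infty(U,V) := \frac{\xi_0^\sigma}{m}\, U^{q-m} - \frac{\beta\xi_0}{m}\, V U^{1-m}\,\mathbf{1}_{\{m+q=2\}} - (m-1)V^2,$$
uniformly on compact subsets of $\{U>0,V>0\}$, and the system is asymptotically autonomous.

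The limit system admits a unique equilibrium $(U_\ast,V_\ast)$ in the open positive quadrant, with $V_\ast = 2/(m-q)$ and $U_\ast$ determined by
$$\xi_0^\sigma U_\ast^{q-m} - \frac{2\beta\xi_0}{m-q}\,U_\ast^{1-m}\,\mathbf{1}_{\{m+q=2\}} = \frac{2m(m+q)}{(m-q)^2}.$$
Linearization at $(U_\ast, V_\ast)$ yields eigenvalues with negative real parts, so this equilibrium is a stable node. To transfer this stability to the actual trajectory I would invoke the a priori bounds of Lemmas~\ref{lem.i1} and~\ref{lem.i2}: estimate~\eqref{i2} gives $U(s)\le C_1$; estimate~\eqref{i1} bounds $V(s)$ from above; and Lemma~\ref{lem.i2} prevents $U$ from dropping to $0$ along a subsequence. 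Therefore the orbit remains in a compact subset of the open positive quadrant, and the standard theory of asymptotically autonomous planar systems forces $(U(s),V(s))\to(U_\ast,V_\ast)$ as $s\to\infty$.

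Finally, I would solve the algebraic equation for $U_\ast$ explicitly. For $m+q<2$ it reduces to $U_\ast^{m-q}=(m-q)^2\xi_0^\sigma/[2m(m+q)]$, so $U_\ast = K_1\,\xi_0^{\sigma/(m-q)}$. For $m+q=2$, setting $D = U_\ast^{m-1}$ and using $m-q=2(m-1)$ converts the relation into the quadratic $\frac{m}{(m-1)^2}D^2 + \frac{\beta\xi_0}{m-1}D - \xi_0^\sigma = 0$, whose positive root simplifies (via $\sqrt{2m(m+q)}=2\sqrt{m}$) to $K_1^{m-1}\xi_0^{\sigma/2}\,K_2(\xi_0^{(2-\sigma)/2})^{m-1}$, hence $U_\ast = K_1\xi_0^{\sigma/(m-q)}K_2(\xi_0^{(2-\sigma)/2})$. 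Reverting via $f(\xi;a) = U(s)\,(\xi_0-\xi)^{2/(m-q)}$ with $s=-\log(\xi_0-\xi)$ delivers the claimed asymptotic. The main obstacle is the convergence step: proving that the orbit of the non-autonomous system actually reaches the hyperbolic equilibrium of the autonomous limit; this is where the a priori bounds of Lemmas~\ref{lem.i1} and~\ref{lem.i2} do the essential work of keeping the orbit confined away from the degenerate boundary $\{U=0\}\cup\{V=\infty\}$ of the phase space.
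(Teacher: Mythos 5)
Your overall strategy (blow up at the interface, pass to an asymptotically autonomous planar system in logarithmic time, identify the equilibrium, solve the resulting algebraic relation for the prefactor) is close in spirit to the paper's, which works instead with a three-dimensional autonomous system in the variables $\mathcal{X},\mathcal{Y},\mathcal{Z}$ of \eqref{change.small}. Your algebra is also correct: the equilibrium relation
$$
\xi_0^\sigma U_\ast^{q-m} - \tfrac{2\beta\xi_0}{m-q}U_\ast^{1-m}\mathbf{1}_{\{m+q=2\}} = \tfrac{2m(m+q)}{(m-q)^2}
$$
does reproduce $K_1$ and $K_2$. However, the central convergence step contains a genuine error. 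The equilibrium $(U_\ast,V_\ast)$ of your limit system is \emph{not} a stable node; it is a source. Indeed, at the equilibrium $\partial_U\dot U=\theta-V_\ast=0$, while
$$
\partial_V\dot V\big|_\ast = 2mV_\ast-1+\tfrac{\beta\xi_0}{m}U_\ast^{1-m}\mathbf{1}_{\{m+q=2\}} \ge \frac{3m+q}{m-q}>0,
$$
so the trace of the Jacobian is strictly positive, and the determinant equals $U_\ast\,\partial_U\dot V\big|_\ast>0$ (the equilibrium relation shows $\partial_U\dot V\big|_\ast>0$ also when $m+q=2$). Both eigenvalues therefore have positive real part, and the theory of asymptotically autonomous systems does not ``force'' convergence to such a point; generic orbits leave it. This is not a repairable technicality by itself: for the model balance $(g^m)''=\xi_0^\sigma g^q$ the generic solutions vanishing at $\eta=0$ have $(g^m)'(0)\neq 0$ and correspond to orbits escaping from $(U_\ast,V_\ast)$; the selection of the special orbit converging to the equilibrium rests precisely on the information $(f^m)'(\xi_0)=0$ from Lemma~\ref{lem.B} together with sign and monotonicity structure. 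The paper faces the same instability (its limiting scalar equation for $Y$ has $Y_\star$ as an \emph{unstable} zero) and gets around it by a bespoke argument: two of its three variables converge monotonically, and the third is pinned down through a careful analysis of its critical points combined with the a priori bound of Lemma~\ref{lem.i2} and the sign $Y<0$. Some analogue of that argument is what your proof is missing.

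A secondary gap: your confinement claims are not justified as stated. Lemma~\ref{lem.i2} is a $\limsup$ statement, so it only controls a quantity along \emph{some} sequence $\xi_j\to\xi_0$, and in your variables it controls $\frac{d}{d\eta}\big(U^{(m-q)/2}\eta\big)$, not $U$ itself; it therefore cannot ``prevent $U$ from dropping to $0$ along a subsequence.'' Likewise \eqref{i1} bounds $|(f^{m-q})'|$, which translates into a bound on $\frac{d}{d\eta}\big(U^{m-q}\eta^2\big)$ and does not by itself exclude $V\to\infty$ with $U\to 0$ simultaneously. So the precompactness of the orbit in the open quadrant $\{U>0,\,V>0\}$, which even a corrected convergence argument would need, is not established.
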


\begin{proof}
Consider $a\in\mathcal{B}$. Setting $f=f(\cdot;a)$, we introduce the following change of unknown functions
\begin{equation}\label{change.small}
\begin{split}
\mathcal{X}(\xi) & := \sqrt{m} \xi^{-(\sigma+2)/2} f^{(m-q)/2}(\xi), \\
\mathcal{Y}(\xi) & := \sqrt{m} \xi^{-\sigma/2} f^{(m-q-2)/2}(\xi) f'(\xi), \\
\mathcal{Z}(\xi) & := \frac{\alpha}{\sqrt{m}} \xi^{(2-\sigma)/2} f^{(2-m-q)/2}(\xi),
\end{split}
\end{equation}
as well as a new independent variable $\eta$ via the integral representation
\begin{equation}\label{PSvar.low}
\eta(\xi) := \frac{1}{\sqrt{m}} \int_0^\xi f^{(q-m)/2}(\xi_*) \xi_*^{\sigma/2} \,d\xi_*, \qquad \xi\in [0,\xi_0].
\end{equation}
Since $f$ is positive in $[0,\xi_0)$ as $a\in\mathcal{B}$ and
$$
f^{(q-m)/2}(\xi_*) \ge \frac{C_1^{(q-m)/2}}{\xi_0-\xi_*}, \qquad \xi_*\in (0,\xi_0),
$$
by~\eqref{i2}, we see that $\eta$ is a diffeomorphism from $[0,\xi_0)$ onto $[0,\infty)$. Also, due to the definition of $\mathcal{Y}$,
\begin{equation}\label{interm15}
\begin{split}
f'(\xi) & = \frac{1}{\sqrt{m}}\xi^{\sigma/2} f^{(q+2-m)/2}(\xi) \mathcal{Y}(\xi), \\
f''(\xi) & = \frac{\sigma}{2\sqrt{m}} \xi^{(\sigma-2)/2}f^{(q+2-m)/2}(\xi) \mathcal{Y}(\xi) + \frac{1}{\sqrt{m}} \xi^{\sigma/2} f^{(q+2-m)/2}(\xi) \mathcal{Y}'(\xi)\\
& \qquad + \frac{q+2-m}{2\sqrt{m}} \xi^{\sigma/2} f^{(q-m)/2}(\xi) f'(\xi) \mathcal{Y}(\xi)\\
& = \frac{\sigma}{2\sqrt{m}} \xi^{(\sigma-2)/2} f^{(q+2-m)/2}(\xi) \mathcal{Y}(\xi) + \frac{1}{\sqrt{m}} \xi^{\sigma/2} f^{(q+2-m)/2}(\xi) \mathcal{Y}'(\xi)\\
& \qquad +\frac{q+2-m}{2m} \xi^{\sigma} f^{q+1-m}(\xi) \mathcal{Y}^2(\xi).
\end{split}
\end{equation}
Replacing the derivatives obtained in \eqref{interm15} into Eq.~\eqref{ODE2} and changing at the end to
\begin{equation}\label{change.small1}
	(\mathcal{X},\mathcal{Y},\mathcal{Z}) := (X\circ\eta, Y\circ\eta, Z\circ\eta)
\end{equation}
leads, after technical but straightforward calculations, to the following autonomous dynamical system
\begin{equation}\label{PSSyst.low}
\left\{
\begin{split}
	\dot{X}&= X \left[ \frac{m-q}{2} Y - \frac{\sigma+2}{2} X\right]\\
	\dot{Y}&= - \frac{m+q}{2} Y^2 - \left( N-1+\frac{\sigma}{2} \right) XY - XZ + \frac{\beta}{\alpha} YZ + 1\\
	\dot{Z}&= Z \left[ \frac{2-m-q}{2}Y + \frac{2-\sigma}{2}X \right],
\end{split}
\right.
\end{equation}
for $\eta\in (0,\infty)$, where the dot denotes the derivative taken with respect to the variable $\eta$ introduced in~\eqref{PSvar.low}. Observe that, due to this change of variable, the behavior of $f(\xi)$ as $\xi\to\xi_0$ is encoded in that of $(X,Y,Z)(\eta)$ as $\eta\to\infty$.

Now, since $a\in\mathcal{B}$, the profile $f$ is positive and decreasing in $(0,\xi_0)$ which implies, along with Lemma~\ref{lem.i2}, \eqref{change.small}, and \eqref{change.small1} that
\begin{equation}\label{ps1}
	Y < 0 \;\;\text{ in }\;\; (0,\infty), \qquad \limsup_{\eta\to\infty} Y(\eta) > -\infty
\end{equation}
and
\begin{equation}\label{ps3}
	X>0, \quad Z>0 \;\;\text{ in }\;\; (0,\infty).
\end{equation}
Moreover, the condition $m+q\le 2$ guarantees that $\sigma>2$ and we infer from \eqref{change.small}, \eqref{change.small1}, \eqref{PSSyst.low}, \eqref{ps1}, and \eqref{ps3} that
\begin{equation}\label{ps2}
	\dot{X}\le 0, \quad \dot{Z} \le 0 \;\;\text{ in }\;\; (0,\infty),
\end{equation}
with
\begin{equation}\label{ps4}
	\lim_{\eta\to\infty} X(\eta) = 0, \quad \lim_{\eta\to\infty} Z(\eta) = Z_\star,
\end{equation}
where $Z_\star := 0$ when $m+q<2$ and $Z_\star := \alpha \xi_0^{(2-\sigma)/2}/\sqrt{m}$ when $m+q=2$.

We next exploit the analysis performed so far to identify the behavior of $Y(\eta)$ as $\eta\to\infty$. We begin with the behavior of $Y$ along unbounded sequences of its critical points and claim that, if $(\eta_j)_{j\ge 1}$ is a non-decreasing sequence satisfying
\begin{subequations}\label{ps100}
\begin{equation}\label{ps100a}
	\dot{Y}(\eta_j)=0, \quad j\ge 1, \qquad \lim_{j\to\infty} \eta_j=\infty,
\end{equation}
then
\begin{equation}\label{ps100b}
	\lim_{j\to\infty} Y(\eta_j) = Y_\star := \frac{1}{m+q} \left( \frac{\beta}{\alpha} Z_\star - \sqrt{\frac{\beta^2}{\alpha^2} Z_\star^2 + 2(m+q)} \right )<0.
\end{equation}
\end{subequations}
Indeed, let $(\eta_j)_{j\ge 1}$ is a non-decreasing sequence satisfying~\eqref{ps100a}. In particular, $\dot{Y}(\eta_j) = 0$ for all $j\ge 1$, which gives, together with~\eqref{PSSyst.low},
\begin{equation} \label{ps8}
	\begin{split}
		\frac{m+q}{2} Y^2(\eta_j) & = 1 - \left( N-1+\frac{\sigma}{2} \right) X(\eta_j) Y(\eta_j)\\
		& \qquad  - X(\eta_j) Z(\eta_j) + \frac{\beta}{\alpha} Y(\eta_j) Z(\eta_j), \qquad j\ge 1.
	\end{split}
\end{equation}
Owing to~\eqref{ps3}, \eqref{ps2},  \eqref{ps8} and Young's inequality,
\begin{equation*}
	\frac{m+q}{2} Y^2(\eta_j) \le 1 + \frac{m+q}{4} Y^2(\eta_j) + \frac{[\sigma+2(N-1)]^2}{4(m+q)}  X^2(1), \qquad j\ge 1.
\end{equation*}
Therefore, $(Y(\eta_j))_{j\ge 1}$ is bounded and we may take the limit $j\to\infty$ in~\eqref{ps8} and use~\eqref{ps4} to obtain that
\begin{equation*}
	\lim_{j\to\infty} \left[ \frac{m+q}{2} Y^2(\eta_j) -  1 - \frac{\beta}{\alpha} Z_\star Y(\eta_j) \right] = 0.
\end{equation*}
Since $Y$ ranges in $(-\infty,0)$ by~\eqref{ps1}, we readily conclude that~\eqref{ps100b} holds true, and the proof of the claim is complete.

Now, either there is $\eta_\infty$ such that $\dot{Y}(\eta)\ne 0$ for $\eta\ge\eta_\infty$. Then $Y$ is monotone on $(\eta_\infty,\infty)$ and, since \eqref{ps1} excludes that it diverges to $-\infty$ as $\eta\to\infty$, there is $Y_\infty\in (-\infty,0]$ such that
\begin{equation}\label{ps5}
	\lim_{\eta\to\infty} Y(\eta) = Y_\infty, \quad \int_{\eta_\infty}^\infty |\dot{Y}(\eta)|\, d\eta = |Y_\infty - Y(\eta_\infty) | < \infty.
\end{equation}
We now infer from~\eqref{PSSyst.low}, \eqref{ps4}, and \eqref{ps5} that
\begin{equation*}
	\lim_{\eta\to\infty} \dot{Y}(\eta) = - \frac{m+q}{2} Y_\infty^2 + \frac{\beta}{\alpha} Y_\infty Z_\star + 1 = 0;
\end{equation*}
that is,
\begin{equation}\label{ps6}
	Y_\infty = \lim_{\eta\to\infty} Y(\eta) = Y_\star
\end{equation}

Or there is a non-decreasing sequence $(\eta_j)_{j\ge 1}$ satisfying~\eqref{ps100a} and, according to the claim~\eqref{ps100}, this sequence also satisfies~\eqref{ps100b}; that is,
\begin{equation}\label{ps7}
	\lim_{j\to\infty} Y(\eta_j) = Y_\star.
\end{equation}
We now define two unbounded and non-decreasing sequences $(\mu_j)_{j\ge 1}$ and $(\nu_j)_{j\ge 1}$ as follows:
\begin{equation*}
	Y(\mu_j) := \max_{[\eta_j,\eta_{j+1}]}\{Y\}, \quad Y(\nu_j) := \min_{[\eta_j,\eta_{j+1}]}\{Y\}, \qquad j\ge 1,
\end{equation*}
and set
\begin{equation*}
	\mathcal{M} := \left\{ j\ge 1\ : \ \mu_j \in (\eta_j,\eta_{j+1}) \right\}, \quad \mathcal{N} := \left\{ j\ge 1\ : \ \mu_j \in (\eta_j,\eta_{j+1}) \right\}.
\end{equation*}
Either $\mathcal{M}$ is finite and we infer from~\eqref{ps7} that
\begin{equation}\label{ps9}
	\limsup_{\eta\to\infty} Y(\eta) \le Y_\star.
\end{equation}
Or $\mathcal{M}$ is infinite and there is an infinite sequence $(j_k)_{k\ge 1}$ such that $\dot{Y}(\mu_{j_k})=0$ for all $k\ge 1$. Since $(\mu_{j_k})_{k\ge 1}$ is an increasing sequence satisfying~\eqref{ps100a}, we infer from the claim~\eqref{ps100} that $(Y(\mu_{j_k}))_{k\ge 1}$ converges to $Y_\star$ as $k\to\infty$ and conclude that~\eqref{ps9} holds true in that case as well. A similar discussion built upon the set $\mathcal{N}$ leads us to
\begin{equation}\label{ps10}
	\liminf_{\eta\to\infty} Y(\eta) \ge  Y_\star.
\end{equation}
Owing to~\eqref{ps9} and~\eqref{ps10}, we have established that
\begin{equation*}
	\lim_{\eta\to\infty} Y(\eta) = Y_\star,
\end{equation*}
which gives, together with~\eqref{change.small} and~\eqref{change.small1},
$$
\lim\limits_{\xi\to\xi_0} \left(f^{(m-q)/2}\right)'(\xi) =\frac{m-q}{2\sqrt{m}} Y_\star \xi_0^{\sigma/2}.
$$
Hence, after integration over $(\xi,\xi_0)$,
$$
\lim\limits_{\xi\to\xi_0} f^{(m-q)/2}(\xi) = - \frac{m-q}{2\sqrt{m}} Y_\star \xi_0^{\sigma/2} (\xi_0-\xi),
$$
and we complete the proof by expressing $Y_\star$ in terms of $Z_\star$ according to the range of $m+q$, see~\eqref{ps4} and~\eqref{ps100b}.
\end{proof}

\medskip

We next turn to the complementary case $m+q>2$ and again use a phase space technique, but with a different dynamical system adapted to this range of exponents.

\begin{proposition}\label{prop.interf.high}
Let $a\in\mathcal{B}$ and assume that $m+q\geq2$. Then (with $\xi_0=\xi_0(a)$)
$$
f(\xi;a) = K_{3}\xi_0^{(\sigma-1)/(1-q)} (\xi_0-\xi)^{1/(1-q)} + o\Big((\xi_0-\xi)^{1/(1-q)}\Big), \qquad {\rm as} \ \xi\to\xi_0,
$$
where
$$
K_3 := \left( \frac{1-q}{\beta} \right)^{1/(1-q)}.
$$
\end{proposition}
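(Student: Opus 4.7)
The plan is to perform a phase-plane analysis in the spirit of the proof of Proposition~\ref{prop.interf.low}, but with the change of unknowns adapted to the new leading-order balance that governs the interface regime when $m+q\ge 2$: here the diffusion contributions in~\eqref{SSODE} are subdominant near $\xi=\xi_0$, and the interface behaviour is produced by the balance between the transport term $-\beta\xi f'$ and the absorption term $\xi^{\sigma}f^{q}$. Solving the reduced first-order ODE $\beta\xi f'+\xi^{\sigma}f^{q}=0$ by separation of variables delivers $f^{1-q}(\xi)\sim (1-q)\xi_{0}^{\sigma-1}(\xi_{0}-\xi)/\beta$, which immediately predicts both the power $(\xi_{0}-\xi)^{1/(1-q)}$ and the form of the constant $K_{3}=((1-q)/\beta)^{1/(1-q)}$.

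To make this rigorous, we introduce the central variable
\begin{equation*}
P(\xi):=-\frac{\beta\, f'(\xi)}{\xi^{\sigma-1}f^{q}(\xi)},
\end{equation*}
whose convergence $P(\xi)\to 1$ as $\xi\to\xi_{0}^{-}$ is exactly equivalent to the leading-order asymptotics claimed in the statement, together with one or two auxiliary variables measuring the diffusive quantities $f^{m-1}$ and $f^{1-q}$ (both of which vanish at the interface when $m+q\ge 2$). An integrated time-like independent variable $\eta(\xi)\to\infty$ as $\xi\to\xi_{0}^{-}$, analogous to~\eqref{PSvar.low}, turns~\eqref{SSODE} into a system of ODEs whose behaviour for large $\eta$ is governed by an autonomous limit system; its unique critical point compatible with the fact that $a\in\mathcal{B}$ corresponds to $P_{\star}=1$.

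The hardest step is to prove that the trajectory of $(P,\ldots)$ indeed converges to this critical point as $\eta\to\infty$. It relies on two inputs from the preceding lemmas, both of which require the hypothesis $m+q\ge 2$: Lemma~\ref{lem.i1} delivers the sharp bound $|(f^{m-q})'(\xi)|\le 2^{N-1}\xi_{0}^{\sigma}(\xi_{0}-\xi)$, and in particular $(f^{m})'/f^{q}=m(f^{m-q})'/(m-q)\to 0$ at the interface; and Lemma~\ref{lem.i3} supplies $(f^{m-1})'(\xi)\to 0$, which we use (after rewriting $(f^{m})''/f^{q}$ by means of~\eqref{SSODE}) to control the second-order derivative term and, consequently, the non-autonomous perturbations of the limit system. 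A subsequence/critical-point argument modelled on the analysis of $Y$ between~\eqref{ps100a} and~\eqref{ps10} in the proof of Proposition~\ref{prop.interf.low} then yields $P(\xi)\to 1$ as $\xi\to\xi_{0}^{-}$.

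The conclusion is then immediate: the limit $P(\xi)\to 1$ is equivalent to $(f^{1-q})'(\xi)=(1-q)f^{-q}(\xi)f'(\xi)\to -(1-q)\xi_{0}^{\sigma-1}/\beta$, so that integrating this on $(\xi,\xi_{0})$ together with $f(\xi_{0})=0$ yields $f^{1-q}(\xi)=(1-q)\xi_{0}^{\sigma-1}(\xi_{0}-\xi)/\beta+o(\xi_{0}-\xi)$, and raising to the $1/(1-q)$ power produces the claimed expansion with the constant $K_{3}$.
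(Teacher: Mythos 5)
Your heuristic is right: the interface balance for $m+q\ge 2$ is indeed $-\beta\xi f'\approx \xi^\sigma f^q$, it predicts the exponent $1/(1-q)$ and the constant $K_3$, and the general phase--plane framework (auxiliary variables, time change $\eta\to\infty$, subsequence/critical-point argument) is the same device the paper uses. But there is a genuine gap at the decisive step, namely the claim that the trajectory argument ``yields $P(\xi)\to 1$.'' In the natural interface variables~\eqref{change.high} one has $P=-(\beta/\alpha)\,\mathcal Y/\mathcal Z$ with \emph{both} $\mathcal Y\to 0$ and $\mathcal Z\to 0$ when $m+q>2$, so $P\to 1$ is a $0/0$ statement that the critical-point analysis of the system~\eqref{PSSyst.high} does not deliver: that analysis only gives $Y\to 0$, i.e. $(f^{m-1})'(\xi)\to 0$, which is far weaker (it holds for \emph{any} profile vanishing like a power $>1/(m-1)$ and carries no information about $K_3$). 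Moreover, the two justifications you offer for controlling the perturbations do not hold up as stated: Lemma~\ref{lem.i3} gives only $\limsup (f^{m-1})'=0$, not the full limit (upgrading the $\limsup$ to a limit is precisely what the dynamical system is needed for); and ``rewriting $(f^m)''/f^q$ by means of~\eqref{SSODE}'' is circular, since dividing~\eqref{SSODE} by $\xi^\sigma f^q$ expresses $(f^m)''/(\xi^\sigma f^q)$ as $1-P$ plus lower-order terms, so controlling that quotient is \emph{equivalent} to the assertion $P\to 1$. To close your route you would additionally need (i) a two-sided boundedness estimate on $P$ itself, i.e. an analogue of Lemma~\ref{lem.i2} for $(f^{1-q})'$ (it does follow from~\eqref{i3}, but you never state it; note that~\eqref{i1} is not enough, since $(f^{1-q})'=\tfrac{1-q}{m-q}f^{1-m}(f^{m-q})'$ and $f^{1-m}\to\infty$), and (ii) a convergence argument for the ratio variable near what is an \emph{unstable} critical point of the limiting scalar equation.

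The paper avoids this difficulty altogether with a different second stage. After obtaining $(f^{m-1})'(\xi)\to 0$ from the phase--plane analysis, it works with the first-order quantity $H(\xi)=\xi^{N-1}(f^m)'(\xi)-\beta\xi^N f(\xi)$ from~\eqref{interm11}, whose derivative~\eqref{interm9} contains no second derivative of $f$. One then gets $-H(\xi)\sim\beta\xi_0^N f(\xi)$ and $H'(\xi)\sim \xi_0^{\sigma+N-1}f^q(\xi)$, hence $\big[(-H)^{1-q}\big]'(\xi)$ converges to an explicit constant; integrating over $(\xi,\xi_0)$ and using $H(\xi_0)=0$ produces the precise asymptotics and the constant $K_3$. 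In effect the equation is integrated once before the absorption balance is extracted, which is exactly the substitute for the missing convergence proof of $P$. As written, your proposal asserts the hard step rather than proving it, so it does not yet reach the stated conclusion.
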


\begin{proof}
We introduce a change of unknown functions following \cite{IS20, IMS22} by setting
\begin{equation}\label{change.high}
\begin{split}
\mathcal{X}(\xi) & := \frac{m}{\alpha} \xi^{-2} f^{m-1}(\xi), \\
\mathcal{Y}(\xi) & := \frac{m}{\alpha} \xi^{-1} f^{m-2}(\xi)f'(\xi), \\
\mathcal{Z}(\xi) & := \frac{m}{\alpha^2} \xi^{\sigma-2} f^{m+q-2}(\xi),
\end{split}
\end{equation}
together with a new independent variable
\begin{equation}\label{PSvar.high}
\eta(\xi) := \frac{\alpha}{m} \int_0^\xi \frac{\xi_*}{f^{m-1}(\xi_*)}\,d\xi_*, \qquad \xi\in [0,\xi_0).
\end{equation}
On the one hand, arguing as in the proof of Proposition~\ref{prop.interf.low}, we get
\begin{equation}\label{interm16}
	\begin{split}
		f'(\xi) & = \frac{\alpha}{m} \xi f^{2-m}(\xi) \mathcal{Y}(\xi),\\
		f''(\xi) & = \frac{\alpha}{m} f^{2-m}(\xi) \mathcal{Y}(\xi) + \frac{\alpha}{m} \xi f^{2-m}(\xi) \mathcal{Y}(\xi) + \frac{\alpha(2-m)}{m} \xi f^{1-m}(\xi) f'(\xi) \mathcal{Y}(\xi)\\
		&=\frac{\alpha}{m} f^{2-m}(\xi) \mathcal{Y}(\xi) + \frac{\alpha}{m} \xi f^{2-m}(\xi) \mathcal{Y}(\xi) + \frac{\alpha^2(2-m)}{m^2} \xi^2 f^{3-2m}(\xi) \mathcal{Y}^2(\xi).
	\end{split}
\end{equation}
Replacing the derivatives obtained in \eqref{interm16} into Eq.~\eqref{ODE2} and changing at the end to
\begin{equation}\label{change.high1}
	(\mathcal{X},\mathcal{Y},\mathcal{Z}) := (X\circ\eta, Y\circ\eta, Z\circ\eta)
\end{equation}
leads, after technical but straightforward calculations, to the following autonomous dynamical system
\begin{equation}\label{PSSyst.high}
	\left\{\begin{split}
		\dot{X}&=X[(m-1)Y-2X]\\
		\dot{Y}&=-Y^2+\frac{\beta}{\alpha}Y-X-NXY+Z\\ \dot{Z}&=Z[(m+q-2)Y+(\sigma-2)X],
	\end{split}\right.
\end{equation}
where the derivatives marked by the dot in the left hand side are taken with respect to the variable $\eta$ introduced in~\eqref{PSvar.high}.

On the other hand, since $a\in\mathcal{B}$, the function $f$ maps a left neighborhood $(\xi_0-\delta,\xi_0)$ of $\xi_0$ on $(0,1)$ and it follows from~\eqref{i3} and the property $m-1>1-q$ that
\begin{equation*}
	\frac{\xi_*}{f^{m-1}(\xi_*)} \ge \frac{\xi_*}{f^{1-q}(\xi_*)} \ge C_2^{q-1} \frac{\xi_*}{\xi_0-\xi_*}, \qquad \xi_*\in (\xi_0-\delta,\xi_0),
\end{equation*}
so that $\eta$ is a diffeomorphism from $[0,\xi_0)$ onto $[0,\infty)$.  We shall then study the behavior of $(X,Y,Z)(\eta)$ as $\eta\to\infty$.

As a preliminary observation, we note that the property $a\in\mathcal{B}$, \eqref{change.high} and Lemma~\ref{lem.i3} entail that
\begin{equation}\label{z2a}
	X \ge 0, \quad Y \le 0, \quad Z\ge 0,
\end{equation}
and
\begin{equation}\label{z2b}
	\lim_{\eta\to\infty} X(\eta) = 0, \quad \limsup_{\eta\to\infty} Y(\eta) = 0, \quad  \lim_{\eta\to\infty} Z(\eta) = 0.
\end{equation}
Owing to~\eqref{z2a} and~\eqref{z2b}, we next argue as in the proof of Proposition~\ref{prop.interf.low} to identify the behavior of $Y(\eta)$ as $\eta\to\infty$ and conclude that
\begin{equation}\label{z2c}
	\lim_{\eta\to\infty} Y(\eta) = 0.
\end{equation}
Recalling~\eqref{change.high} and~\eqref{change.high1}, we readily infer from~\eqref{z2c} that
\begin{equation}\label{z6}
	\lim_{\xi\to\xi_0} \left( f^{m-1} \right)'(\xi) = 0.
\end{equation}
Now, recalling the definition~\eqref{interm11} of $H$, we deduce from~\eqref{z6} that
\begin{equation}\label{z7}
	- H(\xi) = \beta \xi^N f(\xi) \left( 1 - \frac{m}{m-1} \frac{\left( f^{m-1}\right)'(\xi)}{\xi} \right) \sim \beta \xi_0^N f(\xi), \ \ {\rm as} \ \xi\to\xi_0.
\end{equation}
Also, since $a\in\mathcal{B}$ and $q\in (0,1)$, it follows from the formula~\eqref{interm9} for $H'$ and~\eqref{z6} that
\begin{equation*}%\label{z8}
	H'(\xi) = \xi^{\sigma+N-1} f^q(\xi) \left( 1 - (\alpha+N\beta) \xi^{-\sigma} f^{1-q}(\xi) \right) \sim \xi_0^{\sigma+N-1} f^q(\xi), \ \ {\rm as} \ \xi\to\xi_0.
\end{equation*}
Consequently,
\begin{equation*}
	\lim\limits_{\xi\to\xi_0} \frac{H'(\xi)}{(-H)^q(\xi)} = \frac{1}{\beta^q} \xi_0^{\sigma+N(1-q)-1};
\end{equation*}
that is,
\begin{equation*}
	\lim\limits_{\xi\to\xi_0} \left[ (- H)^{1-q}\right]'(\xi) = - \frac{(1-q)}{\beta^q} \xi_0^{\sigma+N(1-q)-1}.
\end{equation*}
Since $H(\xi_0)=0$, we obtain, after integration over $(\xi,\xi_0)$,
\begin{equation*}
	(- H)^{1-q}(\xi) \sim \frac{(1-q)}{\beta^q} \xi_0^{\sigma+N(1-q)-1} (\xi_0-\xi), \ \ {\rm as} \ \xi\to\xi_0.
\end{equation*}
Owing to~\eqref{z7}, we end up with
\begin{equation*}
	\beta \xi_0^N f(\xi) \sim \beta K_3 \xi_0^{(\sigma+N(1-q)-1)/(1-q)} (\xi_0-\xi)^{1/(1-q)}, \ \ {\rm as} \ \xi\to\xi_0,
\end{equation*}
and the proof of Proposition~\ref{prop.interf.high} is complete.
\end{proof}

With all these preparations, we are now in a position to prove the uniqueness of the self-similar solution and thus complete the proof of Theorem~\ref{th.uniqSS}.

\subsection{Uniqueness of the self-similar profile}\label{subsec.uniq}

We begin with a general monotonicity property of the profiles $f(\cdot;a)$ with respect to the parameter $a>0$. Recalling the definitions of $\xi_0(a)$ and $\xi_1(a)$ introduced in~\eqref{support} and~\eqref{support.der}, respectively, we have the following ordering property.

\begin{lemma}\label{lem.monot}
Let $0<a_1<a_2<\infty$. Then $f(\xi;a_1)<f(\xi;a_2)$ for any $\xi\in [0,\xi_1(a_1))$. In other words, two solutions $f(\cdot;a_1)$ and $f(\cdot;a_2)$ to the Cauchy problem~\eqref{ODE2}-\eqref{init.cond.ODE2} remain ordered at least as long as the one with smaller initial value is decreasing and positive.
\end{lemma}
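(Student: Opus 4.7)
The plan is to exhibit an explicit rescaled strict subsolution of~\eqref{SSODE} built from $f(\cdot;a_1)$ which shares its Cauchy data at $\xi=0$ with $f(\cdot;a_2)$, and then to use the strict decay of $f(\cdot;a_1)$ on $[0,\xi_1(a_1))$ to close the argument by contradiction. Concretely, I set $k := a_2/a_1 > 1$ and $\lambda := k^{(1-m)/2}\in (0,1)$, and introduce $\hat f(\xi) := k\, f(\lambda\xi; a_1)$. A direct computation using~\eqref{SSODE} for $f(\cdot;a_1)$ yields, with $\eta := \lambda\xi$,
\[
(\hat f^m)''(\xi) + \frac{N-1}{\xi}(\hat f^m)'(\xi) + \alpha\hat f(\xi) - \beta\xi\hat f'(\xi) - \xi^{\sigma}\hat f^q(\xi) = \bigl(k - k^{q+\sigma(m-1)/2}\bigr)\,\eta^{\sigma}\,f^q(\eta;a_1).
\]
The range~\eqref{range.exp} forces $q + \sigma(m-1)/2 > 1$, and $k>1$ then makes the right-hand side strictly negative for every $\xi>0$. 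Together with $\hat f(0) = a_2 = f(0;a_2)$ and $\hat f'(0) = 0$, this shows that $\hat f$ is a strict subsolution of~\eqref{SSODE} sharing its Cauchy data at $\xi=0$ with $f(\cdot;a_2)$.

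I would next establish the comparison $\hat f \le f(\cdot;a_2)$ on $[0,\xi_1(a_1))$. Lemma~\ref{lem.expsigma} applied to $\hat F := \hat f^m$ and $F_2 := f^m(\cdot;a_2)$ shows that both Taylor expansions at $\xi=0$ agree up to order $\xi^{\sigma+1}$, while the coefficient of $\xi^{\sigma+2}$ in $\hat F$ equals $k^{1-q-\sigma(m-1)/2}<1$ times that of $F_2$---precisely the same sign computation as for the residual above. Hence $\hat f < f(\cdot;a_2)$ in a right neighborhood of $0$. To propagate this inequality up to $\xi_1(a_1)$, the mean value theorem (valid since $F_2$ and $\hat F$ remain positive there) reduces the difference $V := \hat F - F_2$ to a linear second-order differential inequality $\mathcal{N}[V]<0$ with continuous coefficients and initial values $V(0)=V'(0)=0$. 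A contradiction argument at a putative first positive zero $\xi^T$ of $V$ (where $V$ rises to $0$ from below), combining Cauchy--Lipschitz uniqueness for the homogeneous equation $\mathcal{N}[U]=0$ with a careful sign analysis of $V''(\xi^T)$ using $\mathcal{N}[V](\xi^T)<0$ to rule out a transverse crossing, then yields $\hat f \le f(\cdot;a_2)$ throughout $[0,\xi_1(a_1))$.

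Given this global comparison, the conclusion is immediate. Assume for contradiction that there exists $\xi_*\in(0,\xi_1(a_1))$ with $f(\xi_*;a_1)=f(\xi_*;a_2)$; evaluating at $\xi_*$ gives
\[
k\, f(\lambda\xi_*;a_1) = \hat f(\xi_*) \le f(\xi_*;a_2) = f(\xi_*;a_1).
\]
However, $\lambda\xi_* < \xi_* < \xi_1(a_1)$ and the strict decay of $f(\cdot;a_1)$ on $[0,\xi_1(a_1))$ yield $f(\lambda\xi_*;a_1) > f(\xi_*;a_1) > 0$; together with $k>1$, this forces $k\,f(\lambda\xi_*;a_1) > f(\xi_*;a_1)$, a contradiction. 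The main obstacle will be the global propagation $\hat f\le f(\cdot;a_2)$ in the second step: a direct maximum-principle approach is obstructed by the indefinite sign of the zeroth-order coefficient of the linearized equation for $V$ (caused by the source-like term $\alpha f$ in~\eqref{SSODE}), so closing it requires combining homogeneous-ODE uniqueness with the sign information at a hypothetical transverse zero of $V$, leveraging the strict inequality $\mathcal{N}[V]<0$ inherited from the subsolution property of $\hat f$.
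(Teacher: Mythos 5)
Your rescaling is exactly the one the paper uses: $\hat f = k\,f(\lambda\cdot;a_1)$ is (the $1/m$-th power of) the paper's $G_\lambda$ with $\lambda=(a_1/a_2)^{(m-1)/2}$, the unique value matching the Cauchy data at $\xi=0$. Your subsolution computation, the comparison of the $\xi^{\sigma+2}$-coefficients near the origin (which reproduces the paper's treatment of the case where the contact point is $\eta=0$), and the final deduction of the lemma from the global bound $\hat f\le f(\cdot;a_2)$ are all correct. The gap is the propagation step, and it cannot be closed along the lines you sketch. After linearizing you only know that $V=\hat F-F_2$ satisfies $V''+bV'+cV<0$ with $V(0)=V'(0)=0$ and $V<0$ near $0$, where the zeroth-order coefficient $c$ contains the strictly positive contribution $\tfrac{\alpha}{m}\Theta^{(1-m)/m}$ coming from $\alpha F^{1/m}$. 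This information is consistent with $V$ returning to zero: $V(\xi)=-\sin^2\xi$ satisfies $V(0)=V'(0)=0$, $V<0$ on $(0,\pi)$, $V(\pi)=0$, and $V''+3V=-2+\sin^2\xi<0$ everywhere. At a transverse first zero $\xi^T$ the inequality $\mathcal{N}[V](\xi^T)<0$ merely bounds $V''(\xi^T)$ by $-b(\xi^T)V'(\xi^T)$ and yields nothing; at a tangential first zero it forces $V''(\xi^T)<0$, which is perfectly compatible with $V$ having a one-sided maximum there; and Cauchy--Lipschitz uniqueness applies to solutions of $\mathcal{N}[U]=0$, not to strict subsolutions. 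So no pointwise analysis at $\xi^T$ rules out the return.

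The structural reason your contact argument fails is that your subsolution sits \emph{below} the solution: at a tangential contact the difference has a one-sided maximum, so its second derivative has the \emph{same} sign as the one produced by the residual, and no contradiction arises. The paper turns this around with a sliding argument in the scaling parameter: it considers the whole family $G_\lambda(\xi)=\lambda^{-2m/(m-1)}F_1(\lambda\xi)$, uses that $G_\lambda\to\infty$ as $\lambda\to0$ and that $\lambda\mapsto G_\lambda$ is decreasing, and takes the largest $\lambda_0$ with $F_2<G_{\lambda_0}$ on $[0,\xi_*]$, where $\xi_*$ is a hypothetical first contact of $F_1$ and $F_2$. The resulting contact point $\eta$ of $G_{\lambda_0}\ge F_2$ is then a \emph{minimum} of the difference: if $\eta\in(0,\xi_*)$ the first derivatives match and $G_{\lambda_0}''(\eta)\ge F_2''(\eta)$, while subtracting the two nonlinear ODEs gives $G_{\lambda_0}''(\eta)-F_2''(\eta)=\eta^\sigma\big[\lambda_0^{(\sigma(m-1)+2(q-1))/(m-1)}-1\big]G_{\lambda_0}^{q/m}(\eta)<0$, a contradiction; $\eta=\xi_*$ is excluded by the monotonicity of $F_1$; and $\eta=0$ is excluded by precisely your $\xi^{\sigma+2}$-coefficient computation. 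You should replace your step~2 by this sliding device; as written, the proof does not go through.
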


\begin{proof}
Set $F_i := F(\cdot;a_i) = (f^m)(\cdot;a_i)$ for $i=1,2$ and consider $X\in (0,\xi_1(a_1))$. Since $a_1<a_2$, it follows that $F_1<F_2$ in a right neighborhood of $\xi=0$ and we define
\begin{equation*}%\label{intersection}
\xi_{*}:=\inf\{\xi\in(0,X): F_1(\xi)=F_2(\xi)\}>0.
\end{equation*}
Thus $F_1(\xi)<F_2(\xi)$ for $\xi\in[0,\xi_{*})$. Assume for contradiction that $\xi_{*}<X$, so that $F_1(\xi_{*})=F_2(\xi_{*})$. We argue as in \cite{IMS22b, YeYin} and define, for $\lambda\in[0,1]$, the rescaling
\begin{equation*}%\label{sliding}
G_{\lambda}(\xi)=\lambda^{-2m/(m-1)}F_1(\lambda\xi), \qquad \xi\in[0,X].
\end{equation*}
By straightforward calculations, $G_{\lambda}$ is a solution to the ordinary differential equation
\begin{equation}\label{ODE.resc.monot}
\begin{split}
G_{\lambda}''(\xi) + \frac{N-1}{\xi} G_{\lambda}'(\xi) & + \alpha (G_{\lambda}^{1/m})(\xi) - \beta \xi (G_{\lambda}^{1/m})'(\xi)\\
& - \xi^{\sigma} \lambda^{(\sigma(m-1)+2(q-1))/(m-1)}G_{\lambda}^{q/m}(\xi)=0,
\end{split}
\end{equation}
with initial condition
\begin{equation}\label{init.cond.ODE.monot}
G_{\lambda}(0)=\lambda^{-2m/(m-1)}a_1, \qquad G_{\lambda}'(0)=0.
\end{equation}
Let us first remark that, if $0<\lambda<\lambda' \le 1$, then the monotonicity of $F_1$ on $[0,X]$ guarantees that
$$
F_1(\lambda'\xi)<F_1(\lambda\xi), \qquad \xi\in[0,X],
$$
which implies
\begin{equation*}
\begin{split}
G_{\lambda}(\xi)&=\lambda^{-2m/(m-1)}F_1(\lambda\xi)>(\lambda')^{-2m/(m-1)}F_1(\lambda\xi)\\
&>(\lambda')^{-2m/(m-1)}F_1(\lambda'\xi)=G_{\lambda'}(\xi),
\end{split}
\end{equation*}
whence
\begin{equation*}%\label{interm21}
F_1\leq G_{\lambda'}<G_{\lambda}, \quad {\rm in} \ [0,X], \qquad 0<\lambda<\lambda'\leq 1,
\end{equation*}
and
\begin{align}
\lim\limits_{\lambda\to 0} \min\limits_{[0,X]} G_{\lambda} = \lim\limits_{\lambda\to 0} G_{\lambda}(X) & = \lim\limits_{\lambda\to 0} \lambda^{-2m/(m-1)} F_1(\lambda X) \nonumber \\
& \ge F_1\left( \frac{X}{2} \right) \lim\limits_{\lambda\to 0} \lambda^{-2m/(m-1)} = \infty. \label{interm22}
\end{align}
We can thus define
\begin{equation}\label{interm23}
\lambda_0:=\sup\{\lambda\in(0,1): F_2(\xi)<G_{\lambda}(\xi), \xi\in[0,\xi_{*}]\}
\end{equation}
and readily infer from \eqref{interm22} and the fact that $F_1(\xi)<F_2(\xi)$ for $\xi\in(0,\xi_{*})$ that $\lambda_0\in(0,1)$. This gives in particular that $F_2(\xi)\leq G_{\lambda_0}(\xi)$ for any $\xi\in[0,\xi_{*}]$ and that there exists some $\eta\in[0,\xi_*]$ such that $F_2(\eta)=G_{\lambda_0}(\eta)$ (otherwise, $G_{\lambda_0}-F_2>0$ on the compact set $[0,\xi_{*}]$, which contradicts the optimality of $\lambda_0$ in~\eqref{interm23}). Assume first that $\eta=\xi_{*}$. Then we readily get a contradiction from
$$
G_{\lambda_0}(\eta) =  G_{\lambda_0}(\xi_*) = \lambda_0^{-2m/(m-1)} F_1(\lambda_0\xi_*) > F_1(\xi_*) = F_2(\xi_*) = F_2(\eta).
$$
Assume next that $\eta\in(0,\xi_{*})$. Then the function $G_{\lambda_0}-F_2$ on $[0,\xi_{*}]$ has a minimum point at $\eta\in (0,\xi_{*})$ and thus
\begin{equation}\label{interm24}
G_{\lambda_0}(\eta)=F_2(\eta), \ \ G_{\lambda_0}'(\eta)=F_2'(\eta), \ \ G_{\lambda_0}''(\eta)\geq F_2''(\eta).
\end{equation}
We then deduce from the equations~\eqref{ODE2} and~\eqref{ODE.resc.monot} solved by $F_2$ and $G_{\lambda_0}$, respectively, and from~\eqref{interm24} that
\begin{equation*}
\begin{split}
0&=F_2''(\eta)+\frac{N-1}{\eta}F_2'(\eta)+\alpha F_2^{1/m}(\eta)-\beta\eta(F_2^{1/m})'(\eta)-\eta^{\sigma}F_2^{q/m}(\eta)\\
&\leq G_{\lambda_0}''(\eta)+\frac{N-1}{\eta}G_{\lambda_0}'(\eta)+\alpha G_{\lambda_0}^{1/m}(\eta)-\beta\eta(G_{\lambda_0}^{1/m})'(\eta)-\eta^{\sigma}G_{\lambda_0}^{q/m}(\eta)\\
&=\eta^{\sigma}\left[\lambda_0^{\sigma+2(q-1)/(m-1)}-1\right]G_{\lambda_0}^{q/m}(\eta)<0,
\end{split}
\end{equation*}
since $\eta\neq0$, $G_{\lambda_0}(\eta)>0$, $\lambda_0\in(0,1)$ and $\sigma(m-1)+2(q-1)>0$ in our range of exponents~\eqref{range.exp}. We thus reach again a contradiction.

We are thus left with $\eta=0$, which gives
\begin{equation}\label{interm25}
G_{\lambda_0}(0)=F_2(0), \qquad G_{\lambda_0}(\xi)>F_2(\xi), \ \ \xi\in(0,\xi_{*}].
\end{equation}
It readily follows from the equality at $\xi=0$ in \eqref{interm25} that
\begin{equation}\label{interm26}
a_2^m=a_1^m\lambda_0^{-2m/(m-1)} \qquad {\rm or \ equivalently} \qquad a_2=a_1\lambda_0^{-2/(m-1)}.
\end{equation}
We then argue as in \cite[Section~5]{IMS22b} by using the expansions for $\xi$ small obtained in Section~\ref{subsec.basic} to reach a contradiction. The proof is now split into two cases.

\medskip

\noindent \textbf{Case 1. $\sigma\not\in\mathbb{N}$}. We recall that, as $\xi\to0$, \eqref{interm8} and \eqref{exp.smallF1} imply that
$$
F_i(\xi)=\sum\limits_{j=0}^{k_0+2}a_i^{m-j(m-1)/2}\Omega_j\xi^{j}+\frac{a_i^q}{(\sigma+N)(\sigma+2)}\xi^{\sigma+2}+o(\xi^{\sigma+2}),
$$
for $i=1,2$ and where $k_0$ is the integer part of $\sigma$. We then infer from \eqref{interm26} and the previous expansion applied to $F_1$ that
\begin{equation*}
\begin{split}
G_{\lambda_0}(\xi)&=\lambda_0^{-2m/(m-1)}\left[\sum\limits_{j=0}^{k_0+2}a_1^{m-j(m-1)/2}\lambda_0^j\Omega_j\xi^{j}+\frac{a_1^q\lambda_0^{\sigma+2}}{(\sigma+N)(\sigma+2)}\xi^{\sigma+2}+o(\xi^{\sigma+2})\right]\\
&=\sum\limits_{j=0}^{k_0+2}a_1^{m}\lambda_0^{-2m/(m-1)}\left(a_1\lambda_0^{-2/(m-1)}\right)^{-j(m-1)/2}\Omega_j\xi^{j}\\
&\qquad + \frac{a_1^q\lambda_0^{[\sigma(m-1)-2]}/(m-1)}{(\sigma+N)(\sigma+2)} \xi^{\sigma+2} + o(\xi^{\sigma+2})\\
&=\sum\limits_{j=0}^{k_0+2}a_2^{m-j(m-1)/2}\Omega_j\xi^{j}+\frac{a_2^q\lambda_0^{[\sigma(m-1)+2(q-1)]/(m-1)}}{(\sigma+N)(\sigma+2)}\xi^{\sigma+2}+o(\xi^{\sigma+2})\\
&=F_2(\xi)-\frac{a_2^q}{(\sigma+N)(\sigma+2)}\xi^{\sigma+2}+\frac{a_2^q\lambda_0^{[\sigma(m-1)+2(q-1)]/(m-1)}}{(\sigma+N)(\sigma+2)}\xi^{\sigma+2}+o(\xi^{\sigma+2})\\
&=F_2(\xi)+\frac{a_2^q}{(\sigma+N)(\sigma+2)}\xi^{\sigma+2}\left[\lambda_0^{[\sigma(m-1)+2(q-1)]/(m-1)}-1\right]+o(\xi^{\sigma+2}).
\end{split}
\end{equation*}
Since $\lambda_0\in(0,1)$ and $\sigma(m-1)+2(q-1)>0$ by~\eqref{range.exp}, we conclude that $G_{\lambda_0}(\xi)<F_2(\xi)$ in a right neighborhood of $\xi=0$, which is a contradiction to \eqref{interm25}.

\medskip

\noindent \textbf{Case 2. $\sigma\in\mathbb{N}$}. We recall that, as $\xi\to0$, \eqref{interm8} and \eqref{exp.smallF2} imply that
$$
F_i(\xi)=\sum\limits_{j=0}^{k_0+3}a_i^{m-j(m-1)/2}\Omega_j\xi^{j}+\frac{a_i^q}{(\sigma+N)(\sigma+2)}\xi^{\sigma+2}+o(\xi^{\sigma+2}),
$$
for $i=1,2$ and where $k_0=\sigma-1$, and we proceed as in Case~1 to obtain a similar contradiction to \eqref{interm25}.

We have thus reached a contradiction with our initial assumption $\xi_{*}<X$. It thus follows that $\xi_{*}=X$ and $F_1(\xi)<F_2(\xi)$ for $\xi\in[0,X)$. The proof is ended by noticing that $X$ has been chosen arbitrarily in $(0,\xi_1(a_1))$.
\end{proof}

We are now in a position to complete the proof of the uniqueness part of Theorem~\ref{th.uniqSS} by specializing our analysis for elements $a\in\mathcal{B}$.

\begin{proof}[Proof of Theorem~\ref{th.uniqSS}: Uniqueness]
Assume for contradiction that there are $a_1\in\mathcal{B}$ and $a_2\in\mathcal{B}$ such that $a_1<a_2$. Since we know from Lemma~\ref{lem.B} that $\xi_1(a)=\xi_0(a)$ and $f'(\cdot;a)<0$ on $(0,\xi_0(a))$ for $a\in\mathcal{B}$, we infer from Lemma~\ref{lem.monot} that $F_1(\xi)<F_2(\xi)$ for any $\xi\in[0,\xi_0(a_1))$. In particular, $\xi_0(a_2)\ge \xi_0(a_1)$ and, since $F_1(\xi)=0<F_2(\xi)$ for $\xi\in [\xi_0(a_1),\xi_0(a_2))$, we conclude that
\begin{equation*}%\label{up1}
	F_1(\xi) < F_2(\xi), \qquad \xi\in [0,\xi_0(a_2)).
\end{equation*}	
As in the proof of Lemma~\ref{lem.monot}, we set
\begin{equation*}
	G_\lambda(\xi) := \lambda^{-2m/(m-1)} F_1(\lambda \xi), \qquad (\xi,\lambda)\in [0,\xi_0(a_2)]\times (0,1],
\end{equation*}	
and define
\begin{equation}\label{up2}
	\lambda_0 := \sup\left\{ \lambda\in (0,1]\ :\ F_2(\xi)<G_\lambda(\xi),\ \xi\in [0,\xi_0(a_2)) \right\} \in (0,1),
\end{equation}
the existence of $\lambda_0$ being guaranteed by the property
\begin{align*}
	\lim\limits_{\lambda\to 0} \min_{[0,\xi_0(a_2)]} G_\lambda & = \lim\limits_{\lambda\to 0} G_\lambda(\xi_0(a_2)) = \lim\limits_{\lambda\to 0} \lambda^{-2m/(m-1)} F_1(\lambda \xi_0(a_2)) \\
	& \ge F_1\left( \frac{\xi_0(a_1)}{2} \right) \lim\limits_{\lambda\to 0} \lambda^{-2m/(m-1)} = \infty.
\end{align*}
According to the definition~\eqref{up2} of $\lambda_0$, there is $\eta\in [0,\xi_0(a_2)]$ such that $F_2(\eta)=G_{\lambda_0}(\eta)$ and $F_2\le G_{\lambda_0}$ on $[0,\xi_0(a_2)]$. Arguing as in the proof of Lemma~\ref{lem.monot} discards that $\eta$ lies in $[0,\xi_0(a_2))$ and we end up with $\eta=\xi_0(a_2)$; that is,	
\begin{equation}\label{interm27}
F_2(\xi_0(a_2))=G_{\lambda_0}(\xi_0(a_2))=0, \qquad 0<F_2(\xi)<G_{\lambda_0}(\xi), \ \ \xi\in[0,\xi_0(a_2)).
\end{equation}
First of all, we readily infer from the equality of the supports in \eqref{interm27} that
\begin{equation} \label{up3}
	\xi_0(a_1)=\lambda_0\xi_0(a_2).
\end{equation}
We now split the end of the analysis into three cases according to the range of $m+q-2$, taking into account that the local behavior at the interface is different in each case.

\medskip

\noindent \textbf{Case~1. $m+q<2$}. We recall that in this case Proposition~\ref{prop.interf.low} gives
$$
F_i(\xi) = K_1^m \xi_0^{m\sigma/(m-q)}(a_i) (\xi_0(a_i)-\xi)^{2m/(m-q)} + o((\xi_0(a_i)-\xi)^{2m/(m-q)})
$$
as $\xi\to\xi_0(a_i)$, $i=1,2$. Using~\eqref{up3}, we compute the expansion of $G_{\lambda_0}(\xi)$ as $\xi\to\xi_0(a_2)$ and find
\begin{align*}
G_{\lambda_0}(\xi) & = \lambda_0^{-2m/(m-1)} K_1^m \xi_0(a_1)^{m\sigma/(m-q)} (\xi_0(a_1)-\lambda_0\xi)^{2m/(m-q)}\\
& \qquad + o((\xi_0(a_1)-\lambda_0\xi)^{2m/(m-q)})\\
&=\lambda_0^{-2m/(m-1)}K_1^m (\lambda_0\xi_0(a_2))^{m\sigma/(m-q)} \lambda_0^{2m/(m-q)} (\xi_0(a_2)-\xi)^{2m/(m-q)}\\
& \qquad +o((\xi_0(a_2)-\xi)^{2m/(m-q)})\\
&=\lambda_0^{m[\sigma(m-1)+2(q-1)]/(m-1)(m-q)}K_1^m (\xi_0(a_2)-\xi)^{2m/(m-q)}\\
&\qquad +o((\xi_0(a_2)-\xi)^{2m/(m-q)}) \\
&= \lambda_0^{m[\sigma(m-1)+2(q-1)]/(m-1)(m-q)} F_2(\xi) + o((\xi_0(a_2)-\xi)^{2m/(m-q)}).
\end{align*}
Since $\lambda_0\in(0,1)$ and $\sigma(m-1)+2(q-1)>0$ in our range of exponents \eqref{range.exp}, we easily infer from the above formula that $G_{\lambda_0}(\xi)<F_2(\xi)$ in a left neighborhood of their common edge of the support $\xi_0(a_2)$, in contradiction with~\eqref{interm27}.

\medskip

\noindent \textbf{Case~2. $m+q=2$}. We first recall that, in that peculiar case,  Proposition~\ref{prop.interf.low} gives
\begin{align*}
	F_i(\xi) & = K_1^m \xi_0^{m\sigma/(m-q)}(a_i) K_2\left( \xi_0^{(2-\sigma)/2}(a_i) \right)^m (\xi_0(a_i)-\xi)^{2m/(m-q)} \\
	& \qquad + o\Big((\xi_0(a_i)-\xi)^{2m/(m-q)}\Big)
\end{align*}
as $\xi\to\xi_0(a_i)$, $i=1,2$. Using \eqref{up3}, we compute the expansion of $G_{\lambda_0}(\xi)$ as $\xi\to\xi_0(a_2)$ and obtain
\begin{align*}
	G_{\lambda_0}(\xi) & = \lambda_0^{-2m/(m-1)} K_1^m \xi_0^{m\sigma/(m-q)}(a_1) \\
	& \hspace{2cm} \times K_2^m\left( \xi_0^{(2-\sigma)/2}(a_1) \right) (\xi_0(a_1)-\lambda_0\xi)^{2m/(m-q)} \\
	& \qquad + o\Big((\xi_0(a_1)-\lambda_0\xi)^{2m/(m-q)}\Big) \\
	& = \lambda_0^{-2m/(m-1)} K_1^m (\lambda_0\xi_0(a_2))^{m\sigma/(m-q)} \\
	& \hspace{2cm} \times K_2^m\left( (\lambda_0\xi_0(a_2))^{(2-\sigma)/2} \right) \lambda_0^{2m/(m-q)}(\xi_0(a_2)-\xi)^{2m/(m-q)} \\
	& \qquad + o\Big((\xi_0(a_2)- \xi)^{2m/(m-q)}\Big) \\
	& = \lambda_0^{m[\sigma(m-1)+2(q-1)]/(m-1)(m-q)} K_1^m \xi_0^{m\sigma/(m-q)}(a_2) \\
	& \hspace{2cm} \times K_2^m\left( (\lambda_0\xi_0(a_2))^{(2-\sigma)/2} \right) (\xi_0(a_2)-\xi)^{2m/(m-q)} \\
	& \qquad + o\Big((\xi_0(a_2)- \xi)^{2m/(m-q)}\Big) \\
	& = \lambda_0^{m[\sigma(m-1)+2(q-1)]/(m-1)(m-q)} \left[ \frac{ K_2^m\left( (\lambda_0\xi_0(a_2))^{(2-\sigma)/2} \right)}{K_2^m\left( \xi_0(a_2)^{(2-\sigma)/2} \right)} \right] F_2(\xi) \\
	& \qquad + o\Big((\xi_0(a_2)- \xi)^{2m/(m-q)}\Big) .
\end{align*}

Introducing
\begin{equation*}
	\kappa(z) := \frac{1+\sqrt{1+\kappa_0}}{1+\sqrt{1 + \kappa_0 z^2}} z, \quad z\ge 0, \qquad \kappa_0 := \frac{2m(m+q) \xi_0^{2-\sigma}(a_2)}{\beta^2},
\end{equation*}
the expansion of $G_{\lambda_0}(\xi)$ as $\xi\to\xi_0(a_2)$ reads
\begin{equation}\label{up4}
	\begin{split}
	G_{\lambda_0}(\xi) & = \lambda_0^{m[\sigma(m-1)+2(q-1)]/(m-1)(m-q)}\kappa^{2m/(m-q)}\left( \lambda_0^{(\sigma-2)/2} \right) F_2(\xi) \\
	& \qquad + o\Big((\xi_0(a_2)- \xi)^{2m/(m-q)}\Big) .
	\end{split}
\end{equation}
Now, $\kappa(0)=0$, $\kappa(1) = 1$, and
\begin{equation*}
	\kappa'(z) = \frac{(1+\sqrt{1+\kappa_0})(1+\sqrt{1+ \kappa_0 z^2})}{\sqrt{1+ \kappa_0 z^2} (1+\sqrt{1+ \kappa_0 z^2})^2}>0.
\end{equation*}
Consequently, since $\sigma>2$ by~\eqref{range.exp} and the property $m+q=2$, we have $\lambda_0^{(\sigma-2)/2}<1$ and thus $\kappa\left( \lambda_0^{(\sigma-2)/2} \right) <1$ as well. Using once more~\eqref{range.exp}, we end up with
\begin{equation*}
	\lambda_0^{m[\sigma(m-1)+2(q-1)]/(m-1)(m-q)}\kappa^{2m/(m-q)}\left( \lambda_0^{(\sigma-2)/2} \right) < 1,
\end{equation*}
which, together with~\eqref{up4}, contradicts~\eqref{interm27}.

\medskip

\noindent \textbf{Case~3. $m+q>2$}. In this case, by Proposition~\ref{prop.interf.high},
$$
F_i(\xi)=K_3^m(m,q,\sigma) \xi_0^{m(\sigma-1)/(1-q)}(a_i) (\xi_0(a_i)-\xi)^{m/(1-q)} + o((\xi_0(a_i)-\xi)^{m/(1-q)})
$$
as $\xi\to\xi_0(a_i)$, $i=1,2$. Using \eqref{up3}, we compute the expansion of $G_{\lambda_0}(\xi)$ as $\xi\to\xi_0(a_2)$ and find
\begin{align*}
G_{\lambda_0}(\xi)&=\lambda_0^{-2m/(m-1)} K_3^m(m,q,\sigma) \xi_0^{m(\sigma-1)/(1-q)}(a_1) (\xi_0(a_1)-\lambda_0\xi)^{m/(1-q)}\\
& \qquad +o((\xi_0(a_1)-\lambda_0\xi)^{m/(1-q)})\\
&=\lambda_0^{-2m/(m-1)}K_3^m(m,q,\sigma) (\lambda_0\xi_0(a_2))^{m(\sigma-1)/(1-q)} \lambda_0^{m/(1-q)}(\xi_0(a_2)-\xi)^{m/(1-q)}\\
&\qquad +o((\xi_0(a_2)-\xi)^{m/(1-q)})\\
&=\lambda_0^{m[\sigma(m-1)+2(q-1)]/(m-1)(1-q)}K_3^m(m,q,\sigma) \xi_0^{m(\sigma-1)/(1-q)}(a_2) (\xi_0(a_2)-\xi)^{m/(1-q)}\\
& \qquad + o((\xi_0(a_2)-\xi)^{m/(1-q)}) \\
& = \lambda_0^{m[\sigma(m-1)+2(q-1)]/(m-1)(1-q)} F_2(\xi) + o((\xi_0(a_2)-\xi)^{m/(1-q)}).
\end{align*}
Since $\lambda_0\in(0,1)$ and $\sigma(m-1)+2(q-1)>0$ in our range of exponents \eqref{range.exp}, we again deduce that $G_{\lambda_0}(\xi)<F_2(\xi)$ in a left neighborhood of $\xi_0(a_2)$, in contradiction with \eqref{interm27}.

\medskip

These contradictions show that the set $\mathcal{B}$ can have at most one element $a>0$ and the proof is complete.
\end{proof}

\section{Large time behavior of solutions}\label{sec.asympt}

This section is devoted to the behavior as $t\to\infty$ of weak solutions to Eq.~\eqref{eq1}, completing the proof of Theorem~\ref{th.asympt}, which is now rather short. In order to fix the notation, let $f^{*}=f(\cdot;a^*)$ be the unique self-similar profile according to Theorem ~\ref{th.uniqSS}, with $a^*=f^{*}(0)$ and let $\xi_0^{*}=\xi_0(a^*)$ be the edge of the support of $f^{*}$.

\begin{proof}[Proof of Theorem~\ref{th.asympt}]
Let $u_0\in L_+^{\infty}(\real^N)$ with the property that there exists $\delta>0$ and $r>0$ such that $u_0(x)\geq\delta$ for $x\in B(0,r)$. We first pick $\tau_{\infty}>1$ such that
\begin{equation*}
r\tau_{\infty}^{\beta}>\xi_0^{*}=\xi_0(a^*), \qquad \tau_{\infty}^{\alpha}\delta>a^*.
\end{equation*}
Then, for $x\in B(0,r)$, we have
$$
\tau_{\infty}^{-\alpha}f^{*}(|x|\tau_{\infty}^{\beta})<\tau_{\infty}^{-\alpha}a^*<\delta\leq u_0(x),
$$
while, for $x\in\real^N\setminus B(0,r)$, one has $|x|\tau_{\infty}^{\beta} > \xi_0^*$ and thus
$$
\tau_{\infty}^{-\alpha}f^{*}(|x|\tau_{\infty}^{\beta})=0\leq u_0(x).
$$
The comparison principle then entails that
\begin{equation}\label{lower.bound}
u(t,x)\geq(\tau_{\infty}+t)^{-\alpha}f^{*}(|x|(\tau_{\infty}+t)^{\beta}), \qquad (t,x)\in[0,\infty)\times\real^N.
\end{equation}
Next, according to Theorem~\ref{th.wp}, $u(1)$ is compactly supported in $B(0,R)$ for some $R>0$. We then pick $\tau_0\in (0,1)$ such that
\begin{equation*}
\tau_0^{-\alpha}f^{*}(a^*/2)\geq\|u_0\|_{\infty}, \qquad R\tau_0^{\beta}\leq\frac{a^*}{2}.
\end{equation*}
Then, for $x\in B(0,R)$, the monotonicity of $f^{*}$ and Theorem~\ref{th.wp} imply that
$$
u(1,x) \le \|u(1)\|_\infty \leq\|u_0\|_{\infty}\leq\tau_0^{-\alpha} f^{*}\left(\frac{a^*}{2}\right)\leq\tau_0^{-\alpha}f^{*}(R\tau_0^{\beta})\leq\tau_0^{-\alpha}f^{*}(|x|\tau_0^{\beta}),
$$
while, for $x\in\real^N\setminus B(0,R)$, we have
$$
\tau_0^{-\alpha}f^{*}(|x|\tau_0^{\beta})\geq 0=u(1,x).
$$
Applying again the comparison principle gives
\begin{equation}\label{upper.bound}
u(t+1,x)\leq(\tau_0+t)^{-\alpha}f^{*}(|x|(\tau_0+t)^{\beta}), \qquad (t,x)\in[0,\infty)\times\real^N.
\end{equation}
We next introduce the self-similar variables
\begin{equation}\label{SS.var}
u(t,x)=t^{-\alpha}v(s,y), \qquad s=\ln{t}, \qquad y=x t^{\beta}
\end{equation}
for $(t,x)\in[1,\infty)\times\real^N$. We infer from~\eqref{lower.bound} and~\eqref{upper.bound} that
$$
\left(\frac{t}{\tau_{\infty}+t}\right)^{\alpha}f^{*}\left(|y|\left(\frac{\tau_{\infty}+t}{t}\right)^{\beta}\right)\leq v(s,y)\leq \left(\frac{t}{\tau_{0}-1+t}\right)^{\alpha}f^{*}\left(|y|\left(\frac{\tau_{0}-1+t}{t}\right)^{\beta}\right),
$$
or equivalently in the new variables~\eqref{SS.var}
\begin{align*}
\left(\frac{1}{1+\tau_{\infty}e^{-s}}\right)^{\alpha}& f^{*}\left( |y|(1+\tau_{\infty}e^{-s})^{\beta} \right)\leq v(s,y)\\
&\leq \left(\frac{1}{1+(\tau_{0}-1)e^{-s}}\right)^{\alpha} f^{*}\left( |y|(1+(\tau_{0}-1)e^{-s})^{\beta} \right).
\end{align*}
It then follows from the uniform continuity of $f^{*}$ that
$$
\lim\limits_{s\to\infty}\|v(s)-f^{*}\|_{\infty}=0,
$$
and the proof ends by undoing the self-similar change of variable~\eqref{SS.var} to obtain~\eqref{asympt.beh}.
\end{proof}

As announced in the Introduction, we close the paper with a short result showing that the positivity in a neighborhood of $x=0$ is essential for Theorem~\ref{th.asympt} to hold true.

\begin{proposition}\label{prop.stat}
The function
\begin{equation}\label{statsol}
U(x)=A|x|^{(\sigma+2)/(m-q)}, \qquad A=\left[\frac{(m-q)^2}{m(\sigma+2)[m(\sigma+N)-q(N-2)]}\right]^{1/(m-q)}
\end{equation}
is an explicit but unbounded stationary solution to Eq.~\eqref{eq1} with $U(0)=0$. Moreover, if $u$ is the solution to the Cauchy problem~\eqref{eq1}, \eqref{init.cond} and if $u_0(x)\leq U(x)$ for any $x\in\real^N$, then
$$
u(t,0)=0 \qquad {\rm for \ any} \ t>0.
$$
\end{proposition}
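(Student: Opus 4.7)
The proof naturally splits into two independent parts: verifying that the explicit power function $U$ is indeed a stationary solution, and then deploying a comparison argument to transfer the vanishing of $U$ at the origin to the Cauchy solution $u$.

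For the first part, the plan is direct verification. Since $U$ is radially symmetric, I would write $U^m(x) = A^m |x|^{\gamma}$ with $\gamma := m(\sigma+2)/(m-q)$, so that a standard computation of the Laplacian of a radial power yields $\Delta U^m(x) = A^m \gamma(\gamma+N-2) |x|^{\gamma-2}$. On the other hand, $|x|^{\sigma} U^q(x) = A^q |x|^{\sigma + q(\sigma+2)/(m-q)}$, and a short algebraic check shows that the two exponents coincide, both equal to $(m\sigma+2q)/(m-q)$. Matching the coefficients then reduces to $A^{m-q}\gamma(\gamma+N-2)=1$, and simplifying $\gamma+N-2 = [m(\sigma+N)-q(N-2)]/(m-q)$ yields exactly the value of $A$ stated in~\eqref{statsol}. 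The fact $U(0)=0$ is immediate since $(\sigma+2)/(m-q)>0$.

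For the second part, the plan is to compare $u$ with $U$ on large balls and let the balls exhaust $\real^N$. Fix $T>0$ and choose $R>0$ so large that
\begin{equation*}
A R^{(\sigma+2)/(m-q)} \ge \|u_0\|_\infty\,.
\end{equation*}
On the parabolic cylinder $(0,T)\times B(0,R)$ both $u$ and $U$ are bounded: $u$ by $\|u_0\|_\infty$ thanks to~\eqref{wp0}, and $U$ by $A R^{(\sigma+2)/(m-q)}$ by its explicit form. At $t=0$ we have $u_0\le U$ by assumption, while on the lateral boundary $\{|x|=R\}$ the choice of $R$ guarantees $u(t,x)\le \|u_0\|_\infty \le U(x)$ for all $t\in [0,T]$. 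Since $U$ is a (time-independent) classical supersolution on $B(0,R)\setminus\{0\}$ with $\nabla U^m\in L^2_{\mathrm{loc}}(\real^N)$, I would invoke the bounded-domain version of the comparison principle for the porous medium equation with absorption to conclude that $u\le U$ on $(0,T)\times B(0,R)$. Letting $R\to\infty$ and then $T\to\infty$ gives $u(t,x)\le U(x)$ for every $(t,x)\in(0,\infty)\times\real^N$, and evaluating at $x=0$ produces $0\le u(t,0)\le U(0)=0$ as desired.

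The one technical point that requires care is the applicability of the comparison principle, because $U$ is only locally bounded and the comparison statement proved in Theorem~\ref{th.wp} was tailored to solutions in $L^\infty(\real^N)$. The main obstacle is thus to adapt the argument of that theorem to a bounded cylinder with prescribed ordering on the lateral boundary. I expect this to follow by rerunning the Otto-type computation in the proof of Theorem~\ref{th.wp} but restricted to test functions compactly supported in $B(0,R)$: the weight $\varrho_k$ becomes unnecessary since all integrals are now over a bounded set, the quantity $M(T)$ is replaced by $\|u_0\|_\infty + A R^{(\sigma+2)/(m-q)}$, and the boundary term generated by the integration by parts is non-positive owing to $u\le U$ on $\{|x|=R\}$. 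The monotonicity of $z\mapsto z^q$ still eliminates the absorption term, and Gronwall's inequality closes the estimate exactly as before.
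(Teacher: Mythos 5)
Your proposal is correct and follows essentially the same route as the paper: direct verification that the exponents and coefficients match for the stationary solution, followed by comparison of $u$ with $U$ on a ball whose boundary is where $U$ reaches $\|u_0\|_\infty$ (the paper fixes $R_0=(\|u_0\|_\infty/A)^{(m-q)/(\sigma+2)}$ exactly, while you take any larger radius, and your final limits $R\to\infty$, $T\to\infty$ are not even needed to conclude at $x=0$). The technical caveat you raise about adapting the comparison principle to a bounded cylinder is handled the same way in the paper, which simply invokes the comparison principle on $(0,\infty)\times B(0,R_0)$ with the ordering on the parabolic boundary.
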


\begin{proof}
Let $x\in\real^N$. A direct calculation gives on the one hand that
\begin{equation}\label{interm28}
\Delta U^m(x) = A^m \frac{m(\sigma+2)[m(\sigma+N)-q(N-2)]}{(m-q)^2} |x|^{[m(\sigma+2)-2(m-q)]/(m-q)},
\end{equation}
and on the other hand that
\begin{equation}\label{interm29}
|x|^{\sigma}U^q(x) = A^q |x|^{[q(\sigma+2)+\sigma(m-q)]/(m-q)}.
\end{equation}
Since
$$
m(\sigma+2) - 2(m-q) = q(\sigma+2) + \sigma(m-q) = \sigma m + 2 q
$$
and taking into account the expression of $A$ in~\eqref{statsol}, we readily infer from~\eqref{interm28} and~\eqref{interm29} that
\begin{equation}\label{interm30}
\partial_t U -\Delta U^m + |x|^{\sigma} U^q=0 \;\;\text{ in }\;\; (0,\infty)\times\real^N.
\end{equation}
Consider now an initial condition $u_0\in L_+^\infty(\real^N)$ such that $u_0\le U$ in $\real^N$ and let $u$ be the corresponding solution to~\eqref{eq1}, \eqref{init.cond} given by Definition~\ref{def.wp}. Introducing $R_0 := (\|u_0\|_\infty/A)^{(m-q)/(\sigma+2)}$, it readily follows from~\eqref{eq1}, \eqref{wp0}, \eqref{interm30} and the choice of $R_0$ that $u$ and $U$ are weak solutions to~\eqref{eq1} in $(0,\infty)\times B(0,R_0)$ such that
\begin{equation*}
	u_0(x) \le U(x), \qquad x\in B(0,R_0)
\end{equation*}
and
\begin{equation*}
	u(t,x) \le \|u_0\|_\infty = U(x), \qquad (t,x)\in (0,\infty)\times\partial B(0,R_0).
\end{equation*}
We are then in a position to apply the comparison principle and conclude that $u(t,x)\le U(x)$ for $(t,x)\in [0,\infty)\times \bar{B}(0,R_0)$. In particular, $0 \le u(t,0) \le U(0)=0$ for $t\ge 0$ and the proof is complete.
\end{proof}

\bigskip

\noindent \textbf{Acknowledgements} This work is partially supported by the Spanish project PID2020-115273GB-I00. Part of this work has been developed during visits of R. G. I. to Institut de Math\'ematiques de Toulouse and of Ph. L. to Universidad Rey Juan Carlos, and both authors want to thank for the hospitality and support of the mentioned institutions.

\bibliographystyle{plain}

\begin{thebibliography}{}

\end{thebibliography}


\begin{thebibliography}{1}

\bibitem{Abd98}
U. G. Abdullaev, \emph{Instantaneous shrinking of the support of a solution of a nonlinear degenerate parabolic equation}, Mat. Zametki, \textbf{63} (1998), no.~3, 323--331 (Russian). Translation in Math. Notes, \textbf{63} (1998), no.~3-4, 285--292.

\bibitem{Belaud01}
Y. Belaud, \emph{Time-vanishing properties of solutions of some degenerate parabolic equations with strong absorption}, Adv. Nonlinear Stud., \textbf{1} (2001), no.~2, 117--152.

\bibitem{BNP82}
M. Bertsch, T. Nanbu and L. A. Peletier, \emph{Decay of solutions of a degenerate nonlinear diffusion equation}, Nonlinear Anal., \textbf{6} (1982), no.~6, 539--554.

%\bibitem{Carr}
%J. Carr, \emph{Applications of Centre Manifold Theory}, Springer Verlag, New York, 1981.

\bibitem{CV96}
M. Chaves and J. L. V\'azquez, \emph{Nonuniqueness in nonlinear heat propagation: a heat wave coming from infinity}, Differential Integral Equations, \textbf{9} (1996), no.~3, 447--464.

\bibitem{CV99}
M. Chaves and J. L. V\'azquez, \emph{Free boundary layer formation in nonlinear heat propagation}, Comm. Partial Differential Equations, \textbf{24} (1999), no.~11-12, 1945--1965.

\bibitem{CVW97}
M. Chaves, J. L. V\'azquez and M. Walias, \emph{Optimal existence and uniqueness in a nonlinear diffusion-absorption equation with critical exponents}, Proc. Roy. Soc. Edinburgh Sect. A, \textbf{127} (1997), no.~2, 217--242.

%\bibitem{Date79}
%T. Date, Classification and analysis of two-dimensional real homogeneous quadratic differential equation systems, \emph{J. Differential Equations}, \textbf{32} (1979), no.~3, 311--334.

\bibitem{EK79}
L. C. Evans and B. F. Knerr, \emph{Instantaneous shrinking of the support of nonnegative solutions to certain nonlinear parabolic equations and variational inequalities}, Illinois Math. J., \textbf{23} (1979), no.~1, 153--166.

\bibitem{GSV99a}
V. A. Galaktionov, S. I. Shmarev and J. L. V\'azquez, \emph{Second order interface equations for nonlinear diffusion with very strong absorption}, Comm. Contemporary Math., \textbf{1} (1999), no.~1, 51--64.

\bibitem{GSV99b}
V. A. Galaktionov, S. I. Shmarev and J. L. V\'azquez, \emph{Regularity of interfaces in diffusion processes under the influence of strong absorption}, Arch. Rational Mech. Anal., \textbf{149} (1999), no.~3, 183--212.

\bibitem{GV94}
V. A. Galaktionov and J. L. V\'azquez, \emph{Extinction for a quasilinear heat equation with absorption I. Technique of intersection comparison}, Comm. Partial Diff. Equations, \textbf{19} (1994), no.~7-8, 1075--1106.

\bibitem{Gl1993} A.L. Gladkov, \emph{The Cauchy problem for certain degenerate quasilinear parabolic equations with absorption}, Sib. Math. J. \textbf{34} (1993), no.~1, 37--54.

\bibitem{Gl2001} A.L. Gladkov, \emph{The filtration-absorption equation with a variable coefficient}, Differ. Equ. \textbf{37} (2001), no.~1, 45--50.

\bibitem{GG2002}
A. L. Gladkov and M. Guedda, \emph{Diffusion-absorption equation without growth restrictions on the data at infinity}, J. Math. Anal. Appl., \textbf{274} (2002), no.~1, 16--37.

\bibitem{IL13}
R. G. Iagar and Ph.~Lauren\ced{c}ot, \emph{Existence and uniqueness of very singular solutions for a fast diffusion equation with gradient absorption},  J. London Math. Soc., \textbf{87} (2013), 509-529.

\bibitem{IMS22}
R. G. Iagar, A. I. Mu\~{n}oz and A. S\'anchez, \emph{Self-similar blow-up patterns for a reaction-diffusion equation with weighted reaction in general dimension}, Comm. Pure Appl. Anal., \textbf{21} (2022), no.~3, 891--925.

\bibitem{IMS22b}
R. G. Iagar, A. I. Mu\~{n}oz and A. S\'anchez, \emph{Self-similar solutions preventing finite time blow-up for reaction-diffusion equations with singular potential}, Submitted (2021), Preprint ArXiv no.~2111.04806.

\bibitem{IS20}
R. G. Iagar and A. S\'anchez, \emph{Self-similar blow-up profiles for a reaction-diffusion equation with strong weighted reaction}, Adv. Nonlinear Studies, \textbf{20} (2020), no.~4, 867--894.

\bibitem{IS22}
R. G. Iagar and A. S\'anchez, \emph{Self-similar blow-up profiles for a reaction-diffusion equation with critically strong weighted reaction}, J. Dynam. Differential Equations, to appear (2022), online DOI https://doi.org/10.1007/s10884-020-09920-w.

\bibitem{Ka74}
A. S. Kalasnikov, \emph{The propagation of disturbances in problems of non-linear heat conduction with absorption}, U.S.S.R. Comput. Math. Math. Phys., \textbf{14} (1975), no.~4, 70--85.

\bibitem{Ka84}
A. S. Kalashnikov, \emph{Dependence of properties of solutions of parabolic equations on unbounded domains on the behavior of coefficients at infinity}, Mat. Sb., \textbf{125 (167)} (1984), no.~3, 398--409 (Russian). Translated as Math. USSR Sb., \textbf{53} (1986), no.~2, 399--410.

\bibitem{KP86}
S. Kamin and L. A. Peletier, \emph{Large time behavior of solutions of the porous media equation with absorption}, Israel J. Math., \textbf{55} (1986), no.~2, 129--146.

\bibitem{KPV89}
S. Kamin, L. A. Peletier and J. L. V\'azquez, \emph{Classification of singular solutions of a nonlinear heat equation}, Duke Math. J., \textbf{58} (1989), no.~3, 601--615.

\bibitem{KU87}
S. Kamin and M. Ughi, \emph{On the behavior as $t\to\infty$ of the solutions of the Cauchy problem for certain nonlinear parabolic equations}, J. Math. Anal. Appl., \textbf{128} (1987), no.~2, 456--469.

\bibitem{KV88}
S. Kamin and L. V\'eron, \emph{Existence and uniqueness of the very singular solution of the porous media equation with absorption}, J. Analyse Math., \textbf{51} (1988), 245--258.

\bibitem{Kwak98}
M. Kwak, \emph{A porous media equation with absorption. I. Long time behavior}, J. Math. Anal. Appl., \textbf{223} (1998), no.~1, 96--110.

\bibitem{Le97}
G. Leoni, \emph{On very singular self-similar solutions for the porous media equation with absorption}, Differential Integral Equations, \textbf{10} (1997), no.~6, 1123--1140.

\bibitem{MPV91}
J. B. McLeod, L. A. Peletier and J. L. V\'azquez, \emph{Solutions of a nonlinear ODE appearing in the theory of diffusion with absorption}, Differential Integral Equations, \textbf{4} (1991), no.~1, 1--14.

\bibitem{Ot1996} F. Otto, \emph{$L^1$-contraction and uniqueness for quasilinear elliptic-parabolic equations}, J. Differential Equations \textbf{131} (1996), no.~1, 20--38.

\bibitem{PT86}
L. A. Peletier and D. Terman, \emph{A very singular solution of the porous media equation with absorption}, J. Differential Equations, \textbf{65} (1986), no.~3, 396--410.

%\bibitem{Pe}
%L. Perko, \emph{Differential equations and dynamical systems. Third edition}, Texts in Applied Mathematics, \textbf{7}, Springer Verlag, New York, 2001.

\bibitem{Shi04}
P. Shi, \emph{Self-similar very singular solution to a $p$-Laplacian equation with gradient absorption: existence and uniqueness}, J. Southeast Univ., \textbf{20} (2004), no.~3, 381--386.

\bibitem{VW1994} J. L. V\' azquez and M. Walias, \emph{ Existence and uniqueness of solutions of diffusion-absorption equations with general data}, Differential Integral Equations \textbf{7} (1994), no.~1, 15--36.

\bibitem{YeYin}
H. Ye and J. Yin, \emph{Uniqueness of self-similar very singular solution for non-Newtonian polytropic filtration equations with gradient absorption}, Electronic J. Differential Equations, \textbf{2015} (2015), no.~83, 1--9.

\end{thebibliography}

\end{document}